
\documentclass[12pt,dvips]{amsart}
\usepackage{amsfonts, amssymb, latexsym, epsfig}

\setlength{\oddsidemargin}{0in}
\setlength{\evensidemargin}{0in}
\setlength{\marginparwidth}{0in}
\setlength{\marginparsep}{0in}
\setlength{\marginparpush}{0in}
\setlength{\topmargin}{0in}
\setlength{\headheight}{0pt}
\setlength{\headsep}{0pt}
\setlength{\footskip}{.3in}
\setlength{\textheight}{9.2in}
\setlength{\textwidth}{6.5in}
\setlength{\parskip}{4pt}

\theoremstyle{plain}
  \newtheorem{theorem}{Theorem}[section]
  \newtheorem{maintheorem}{Main Theorem}[section]
  \newtheorem{fact}{Fact}[section]
  \newtheorem{proposition}[theorem]{Proposition}
  \newtheorem{defthm}[theorem]{Definition--Theorem}
  \newtheorem{lemma}[theorem]{Lemma}
  \newtheorem{deflemma}[theorem]{Definition--Lemma}
  \newtheorem{corollary}[theorem]{Corollary}
  \newtheorem{conjecture}[theorem]{Conjecture}
  \newtheorem{problem}[theorem]{Problem}
  \newtheorem{claim}[theorem]{Claim}
\theoremstyle{definition}
  
  \newtheorem{example}[theorem]{Example}

  \newtheorem{warning}[theorem]{Warning}

 \theoremstyle{remark}
  
\numberwithin{equation}{section}

\def\Ess{{\mathcal{E}}}

\def\Groth{\mathfrak{G}}
\def\Schub{\mathfrak{S}}

\def\xx{\mathbf{x}}
\def\yy{\mathbf{y}}
\def\Flags{{\rm Flags}}
\def\del{\mathrm{del}}
\def\link{\mathrm{link}}

\newcommand{\cross}{+}

%

\newcommand{\cellsize}{15}
\newlength{\cellsz} \setlength{\cellsz}{\cellsize\unitlength}
\newsavebox{\cell}
\sbox{\cell}{\begin{picture}(\cellsize,\cellsize)
\put(0,0){\line(1,0){\cellsize}}
\put(0,0){\line(0,1){\cellsize}}
\put(\cellsize,0){\line(0,1){\cellsize}}
\put(0,\cellsize){\line(1,0){\cellsize}}
\end{picture}}
\newcommand\cellify[1]{\def\thearg{#1}\def\nothing{}%
\ifx\thearg\nothing
\vrule width0pt height\cellsz depth0pt\else
\hbox to 0pt{\usebox{\cell} \hss}\fi%
\vbox to \cellsz{
\vss
\hbox to \cellsz{\hss$#1$\hss}
\vss}}
\newcommand\tableau[1]{\vtop{\let\\\cr
\baselineskip -16000pt \lineskiplimit 16000pt \lineskip 0pt
\ialign{&\cellify{##}\cr#1\crcr}}}
%

\font\co=lcircle10
\def\petit#1{{\scriptstyle #1}}

\def\jr{\smash{\raise2pt\hbox{\co \rlap{\rlap{\char'005} \char'007}}
               \raise6pt\hbox{\rlap{\vrule height5pt}}
               \raise2pt\hbox{\rlap{\hskip4pt \vrule height0.4pt depth0pt
                width5.7pt}}
               \raise2pt\hbox{\rlap{\hskip-9.5pt \vrule height.4pt depth0pt
                width6.2pt}}
               \lower6pt\hbox{\rlap{\vrule height4.5pt}}}}
\def\rj{\smash{\raise2pt\hbox{\co \rlap{\rlap{\char'004} \char'006}}
               \raise6pt\hbox{\rlap{\vrule height5pt}}
               \raise2pt\hbox{\rlap{\hskip4pt \vrule height0.4pt depth0pt
                width5.7pt}}
               \raise2pt\hbox{\rlap{\hskip-9.5pt \vrule height.4pt depth0pt
                width6.2pt}}
               \lower6pt\hbox{\rlap{\vrule height4.5pt}}}}
\def\je{\smash{\raise2pt\hbox{\co \rlap{\rlap{\char'005}
                \phantom{\char'007}}}\raise6pt\hbox{\rlap{\vrule height5pt}}
               \raise2pt\hbox{\rlap{\hskip-9.5pt \vrule height.4pt depth0pt
                width6.2pt}}}}
\def\ej{\smash{\raise2pt\hbox{\co \rlap{\rlap{\char'004}\phantom{\char'006}}}
               \raise2pt\hbox{\rlap{\hskip-9.5pt \vrule height.4pt depth0pt
                width6.2pt}}
               \lower6pt\hbox{\rlap{\vrule height4.5pt}}}}
\def\er{\smash{\raise2pt\hbox{\co \rlap{\rlap{\phantom{\char'005}} \char'007}}
               \raise2pt\hbox{\rlap{\hskip4pt \vrule height0.4pt depth0pt
                width5.7pt}}
               \lower6pt\hbox{\rlap{\vrule height4.5pt}}}}
\def\re{\smash{\raise2pt\hbox{\co \rlap{\rlap{\phantom{\char'004}} \char'006}}
               \raise6pt\hbox{\rlap{\vrule height5pt}}
               \raise2pt\hbox{\rlap{\hskip4pt \vrule height0.4pt depth0pt
                width5.7pt}}}}
\def\+{\smash{\lower6pt\hbox{\rlap{\vrule height17pt}}
                \raise2pt
                \hbox{\rlap{\hskip-9pt \vrule height.4pt depth0pt
                width18.7pt}}}}
\def\hor{\smash{\raise2pt\hbox{\rlap{\hskip-9.5pt \vrule height.4pt depth0pt
                width19.2pt}}}}
\def\ver{\smash{\lower6pt\hbox{\rlap{\vrule height17pt}}}}
\def\ho{\smash{\hbox{\rlap{\vrule height5pt}}
                \raise2pt
                \hbox{\rlap{\hskip-9pt \vrule height.4pt depth0pt
                width18.7pt}}}}



\def\perm#1#2{\hbox{\rlap{$\petit {#1}_{\scriptscriptstyle #2}$}}%
                \phantom{\petit 1}}

\def\textcross{\ \smash{\lower4pt\hbox{\rlap{\hskip4.15pt\vrule height14pt}}
                \raise2.8pt\hbox{\rlap{\hskip-3pt \vrule height.4pt depth0pt
                width14.7pt}}}\hskip12.7pt}
\def\textelbow{\ \hskip.1pt\smash{\raise2.75pt%
                \hbox{\co \hskip 4.15pt\rlap{\rlap{\char'004} \char'006}
                \lower6.8pt\rlap{\vrule height3.5pt}
                \raise3.6pt\rlap{\vrule height3.5pt}}
                \raise2.8pt\hbox{%
                  \rlap{\hskip-7.15pt \vrule height.4pt depth0pt width3.5pt}%
                  \rlap{\hskip4.05pt \vrule height.4pt depth0pt width3.5pt}}}
                \hskip8.7pt}

\begin{document}
\pagestyle{plain}
\title{A Gr\"{o}bner basis for Kazhdan-Lusztig ideals}
\author{Alexander Woo}
\address{Department of Mathematics\\
University of Idaho\\
Moscow, ID 83844-1103 }
\email{awoo@uidaho.edu}

\author{Alexander Yong}
\address{Department of Mathematics\\
University of Illinois at Urbana-Champaign\\
Urbana, IL 61801}
\email{ayong@illinois.edu}

\subjclass[2000]{14M15, 14N15}

\keywords{Schubert varieties, Gr\"{o}bner basis, determinantal ideals,
equivariant $K$-theory localization}

\date{November 11, 2011}

\begin{abstract}
\emph{Kazhdan--Lusztig ideals}, a family of generalized determinantal ideals
investigated in [Woo--Yong '08], provide an explicit choice of coordinates and
equations encoding a neighbourhood of a torus-fixed point of a Schubert
variety on a type $A$ flag variety.  Our main result is a Gr\"{o}bner basis
for these ideals. This provides a single geometric
setting to transparently explain
the naturality of pipe dreams on the \emph{Rothe diagram of a
permutation}, and their appearance in:
\begin{itemize}
\item[$\bullet$] combinatorial formulas [Fomin--Kirillov '94]
for Schubert and Grothendieck polynomials of
[Lascoux--Sch\"{u}tzenberger '82];
\item the equivariant $K$-theory specialization formula of
[Buch--Rim\'{a}nyi '04]; and
\item a positive combinatorial formula for multiplicities of Schubert
  varieties in good cases, including those for which the associated Kazhdan--Lusztig ideal is
  homogeneous under the standard grading.
\end{itemize}
Our results generalize (with alternate proofs)
[Knutson--Miller '05]'s Gr\"{o}bner basis
theorem for Schubert determinantal ideals
and their geometric interpretation of the monomial positivity of
Schubert polynomials. We also complement recent work of [Knutson '08 $\&$ '09]
on degenerations of Kazhdan--Lusztig varieties in general Lie type, as well as
work of [Goldin '01] on equivariant localization and of [Lakshmibai--Weyman
  '90], [Rosenthal--Zelevinsky '01], and [Krattenthaler '01] on Grassmannian
multiplicity formulas.
\end{abstract}

\maketitle

\tableofcontents

\section{Introduction}

In our study \cite{WYII} of singularities of Schubert
varieties, we investigated \emph{Kazhdan--Lusztig ideals}. These
encode coordinates and equations
for neighbourhoods of type $A$ Schubert
varieties at torus fixed points.  The following problem naturally arises:
\begin{problem}
\label{problem:intro}
Determine a Gr\"{o}bner basis for the Kazhdan--Lusztig
ideals.
\end{problem}
The main result of this paper solves Problem~\ref{problem:intro} by giving an
explicit Gr\"obner basis with squarefree lead terms, thereby producing a single
degeneration from the (reduced and irreducible) variety of the Kazhdan--Lusztig
ideal to a reduced union of coordinate subspaces.  The class of
Kazhdan--Lusztig ideals includes determinantal ideals and many of their
generalizations. Gr\"{o}bner bases for determinantal ideals and their
generalizations have been of interest, as their study requires and has
applications involving simultaneously commutative algebra, algebraic geometry,
representation theory, and combinatorics.  Various aspects of determinantal
ideals and their generalizations have been extensively treated in the books
\cite{Bruns.Vetter, Miller.Sturmfels, MiroRoig}.

We use our Gr\"{o}bner degeneration to give new explanations for some Schubert
combinatorics.  We point out a geometric origin for various
combinatorial formulas for Schubert and Grothendieck polynomials
\cite{Lascoux.Schutzenberger1, Lascoux.Schutzenberger2}.  Specifically, we
explain the combinatorics of \emph{pipe dreams} on the \emph{Rothe diagram} of
a permutation, rather than just the $n\times n$ grid itself, which was the
focus of \cite{Knutson.Miller, KMY}. We thus obtain a geometric
explanation and new proof of the formula of A.~Buch and R.~Rim\'{a}nyi
\cite{Buch.Rimanyi} for the equivariant $K$-theory specializations of
these polynomials (as well as for older formulas of
\cite{Fomin.Kirillov}). This answers a question raised in
\cite[Section~2]{Buch.Rimanyi}.

Moreover, our results generalize both the earlier Gr\"{o}bner geometry
explanation of the monomial positivity of Schubert polynomials from
\cite[Theorem~A]{Knutson.Miller} as well as their companion
Gr\"{o}bner basis theorem \cite[Theorem~B]{Knutson.Miller} for
\emph{Schubert determinantal ideals}.  In brief, as suggested in
\cite{Fulton:Duke92}, Schubert determinantal ideals can be realized as
special cases of Kazhdan--Lusztig ideals, allowing us to use
recursions for equivariant $K$-theory Schubert classes due to
B.~Kostant and S.~Kumar \cite{Kostant.Kumar}.  Although this approach
assumes older background, it streamlines proofs by allowing induction
among a larger class of ideals.

This paper is related to developments in
\cite{Knutson.Miller:subword}, \cite{KMY}, \cite{Knutson:patches}, and
\cite{Knutson:frob}.  Specifically, our results complement recent work of
A.~Knutson \cite{Knutson:patches} on reduced degenerations of
\emph{Kazhdan--Lusztig varieties} \cite{Kazhdan.Lusztig} for generalized flag
varieties of arbitrary Lie type.  His paper provided inspiration for the present one. He gives many options for iteratively degenerating a
Kazhdan--Lusztig variety into a reduced union of affine spaces; this union is
described by a \emph{subword complex} \cite{Knutson.Miller:subword}.
More recently, Knutson \cite{Knutson:frob} describes an alternate approach to this
degeneration using Frobenius splitting and Bott--Samelson coordinates, though
he does not explicitly provide equations.

By contrast, we work only in type $A$ and single out a specific
choice of coordinates and a specific Gr\"obner degeneration, which we view as
especially natural in light of the aforementioned Schubert polynomial
combinatorics.  Our Gr\"{o}bner basis theorem provides an explicit,
non-iterative realization of one of the degenerations of
\cite{Knutson:patches} and \cite{Knutson:frob}, using explicit coordinates and
explicit equations in those coordinates.  However, our techniques are distinct
from those of Knutson's aforementioned work, though parallel to them.  We do
not use \cite{Knutson:frob}, nor specifically any of
the new results from \cite{Knutson:patches}, substituting instead direct
combinatorial commutative algebra arguments.  Our Gr\"{o}bner basis
theorem seems difficult to adapt for other degenerations
in \cite{Knutson:patches} and \cite{Knutson:frob}, highlighting that the
choice of coordinates and term order is somewhat delicate.

We expect our methods to extend to finding a generalization of our main
result to other Lie types; we hope to address this in a future paper.
However, type $A$ warrants special attention
since we would like to further our understanding of the geometric genesis of
the beautiful properties of Schubert
polynomial and pipe dream combinatorics.
As noted in \cite[Section~10]{Fomin.Kirillov:B}, the ensemble
of their many properties is specific to type~$A$.

\subsection{Organization of the paper}
This paper has three central, closely interconnected results (Theorems~\ref{thm:main1},~\ref{thm:prime}
and~\ref{thm:specialization}) and an application, as summarized below.

In Section~2, we recall preliminaries about Kazhdan--Lusztig ideals and
state the main result (Theorem~\ref{thm:main1}),
our Gr\"obner basis theorem.

In Section~3, we recall \emph{pipe dreams on Rothe diagrams}
\cite{Fomin.Kirillov, Buch.Rimanyi}.  Our second main result,
Theorem~\ref{thm:prime}, is that they transparently index components of the
prime decomposition of the initial ideal (under our term order) of a
Kazhdan--Lusztig ideal.  We present the associated Stanley--Reisner simplicial
complex with faces labeled by these pipe dreams. This complex has the
structure of an abstract subword complex \cite{Knutson.Miller:subword} and
thus inherits good properties we use; in particular, it is homeomorphic to a
vertex decomposible (and hence shellable) ball or sphere. An
additional advantage of using pipe dreams is that they admit an even more
``graphical'' description as interlacing strand diagrams, akin to the original
objects of \cite{Fomin.Kirillov}; this is not seen at the abstract subword
level.  The proof of Theorem~\ref{thm:main1} requires the definitions but not
the results of this section.

In Section~4, we introduce \emph{unspecialized Grothendieck polynomials}, a
synthesis of results of
\cite{Fomin.Kirillov, Buch.Rimanyi, Knutson.Miller}. In terms of these
polynomials, our third main result (Theorem~\ref{thm:specialization}) shows
that their specializations to Schubert and Grothendieck polynomials and thus
the formulas of \cite{Fomin.Kirillov, Buch.Rimanyi} arise from Gr\"{o}bner
geometry and combinatorial commutative algebra as multidegrees and
$K$-polynomials as defined in, for example, \cite{Knutson.Miller,
  Miller.Sturmfels}.
We prove this result assuming Theorem~\ref{thm:main1}.  We also include a proof of a theorem previously known as folklore,
giving a geometric interpretation of the aforementioned specializations in
terms of equivariant $K$-theory on the flag variety.  As with Section~3,
the proof of Theorem~\ref{thm:main1} requires some definitions but not the
results from this section.

In Section~5, we return to our initial motivation in \cite{WYII} of
understanding invariants of singularities of Schubert varieties. We
relate our work to the open problem of finding an
explicit, nonrecursive combinatorial rule for the multiplicity of a Schubert
variety at a torus fixed point.
We show that in good cases, including those when the Kazhdan--Lusztig ideal is
homogeneous under the standard grading, one can positively calculate the
multiplicity by counting the pipe dreams of Section~3.  In particular, this
leads to a simple proof of a
determinantal formula related but not identical to known
formulas~\cite{Lakshmibai.Weyman, Rosenthal.Zelevinsky} for multiplicities
of torus fixed points of Grassmannian Schubert varieties.
We point out the efficacy of using of random sampling
methods in our study of Schubert varieties, complementing the computational
commutative algebra methods of \cite{WYII}.  

Finally in Section~6 we give proofs for Theorems~\ref{thm:main1} and~\ref{thm:prime}.

\section{Kazhdan--Lusztig ideals and the main theorem}

\subsection{Flag, Schubert and Kazhdan--Lusztig varieties}
Let $G=GL_n({\mathbb C})$, $B$ be the Borel subgroup of strictly upper triangular matrices,
$T\subset B$ the maximal torus of diagonal matrices,
and $B_{-}$ the corresponding opposite Borel subgroup of strictly lower triangular matrices.
The {\bf complete flag  variety} is
\[\Flags({\mathbb C}^n):=G/B.\]
The fixed points of $G/B$ under
the left action of $T$ are naturally indexed by the symmetric group $S_n$ in
its role as the Weyl group of $G$; we denote these points $e_v$ for $v\in
S_n$.  One has a cell decomposition
\[G/B=\coprod_{w\in S_n} Be_w\]
known as the {\bf Bruhat decomposition}.
The $B$-orbit $X_{w}^{\circ}:=Be_w$
is a cell known as the {\bf Schubert cell}, and its
closure $X_w:=\overline{X_{w}^{\circ}}$
is the {\bf Schubert variety}.  It is a subvariety of dimension $\ell(w)$,
where $\ell(w)$ is the length of any reduced word of $w$.  Each Schubert variety
$X_w$ is a union of Schubert cells; {\bf Bruhat order} is the partial order on
$S_n$ defined by declaring that
\[v\leq w \mbox{\ \ \ if $X^\circ_v\subseteq X_w$.}\]

Since every point on $X_w$ is in the $B$-orbit of some $e_v$ (for $v\leq w$ in
Bruhat order), it follows that the study of local questions on Schubert
varieties reduces to the case of these fixed points.  An affine neighbourhood
of $e_v$ is given by $v\Omega^{\circ}_{id}$, where in general
$\Omega_{u}^{\circ}:=B_{-}e_u$
is known as an {\bf opposite Schubert cell}.  Hence to study $X_w$ locally at
$e_v$ one only needs to understand $X_w\cap v\Omega_{id}^{\circ}$. However, by
\cite[Lemma A.4]{Kazhdan.Lusztig}, one has the isomorphism
\begin{equation}
\label{eqn:KL}
X_{w}\cap v\Omega_{id}^{\circ}\cong (X_w\cap \Omega_{v}^{\circ})\times
{\mathbb A}^{\ell(v)}.
\end{equation}

Hence, it is essentially equivalent to
study the (reduced and irreducible)
{\bf Kazhdan--Lusztig variety}
\[{\mathcal N}_{v,w}=X_w\cap
\Omega_{v}^{\circ},\]
forgetting the factor of affine space.

\subsection{A choice of coordinate system, equations, and Kazhdan--Lusztig ideals}
\label{subsect:eqns}

We now define coordinates on $\Omega^\circ_v$, the {\bf
  Kazhdan--Lusztig ideal} $I_{v,w}$ in these coordinates, and
summarize why this ideal vanishes on $\mathcal{N}_{v,w}$.  Let $M_n$
be the set of all $n\times n$ matrices with entries in ${\mathbb C}$,
with coordinate ring ${\mathbb C}[{\bf z}]$ where ${\bf
  z}=\{z_{i,j}\ \}_{i,j=1}^{n}$ are the coordinate functions on the
entries of a generic matrix $Z$.

\begin{warning}
We index our variables so that $z_{i,j}$ is in the $i$-th row from the
{\em bottom} of the matrix and $j$-th column from the left.
\end{warning}

This indexing is consistent with our notation in~\cite{WYII}.  We made
this admittedly ugly choice of notation as a compromise between
inconsistent choices in the literature.  Our notation allows the
Schubert variety $X_w$ to be concretely realized as the closure of
$Be_w$, so that $\dim X_w=\ell(w)$.  At the same time, our choice
allows the ideals defining matrix Schubert varieties use the same
indexed variables as in~\cite{Knutson.Miller}, which
following~\cite{Fulton:Duke92} defines matrix Schubert varieties as the
closures (in $M_n$ of the pullback to $G$) of $B_{-}e_w$.

If we concretely realize $G$, $B$, $B_{-}$, and $T$ as invertible,
upper triangular, lower triangular, and diagonal matrices
respectively, then, as explained in \cite{Fulton:YT}, we can realize
the opposite Schubert cell $\Omega_v^\circ$ as an affine subspace of
$M_n$.  Specifically, a matrix is in (our realization of)
$\Omega_v^\circ$ if, for all $i$,
\[z_{n-v(i)+1,i}=1\]
and, for all $i$,
\[z_{n-v(i)+1,a}=0  \mbox{\ \ and $z_{b,i}=0$ for $a>i$ and $b>n-v(i)+1$.}\]
Let
\[{\bf  z}^{(v)}\subseteq {\bf z}\]
denote the remaining unspecialized variables, and
$Z^{(v)}$ the specialized generic matrix representing a generic element of
$\Omega_v^\circ$.

\begin{example}
\label{exa:261345}
If $n=6$ and $v=261345$ we have:
\[Z=\left(\begin{matrix}
z_{61} & z_{62} & z_{63} & z_{64} & z_{65} & z_{66}\\
z_{51} & z_{52} & z_{53} & z_{54} & z_{55} & z_{56}\\
z_{41} & z_{42} & z_{43} & z_{44} & z_{45} & z_{46}\\
z_{31} & z_{32} & z_{33} & z_{34} & z_{35} & z_{36}\\
z_{21} & z_{22} & z_{23} & z_{24} & z_{25} & z_{26}\\
z_{11} & z_{12} & z_{13} & z_{14} & z_{15} & z_{16}
\end{matrix}\right), \mbox{ \ and \ }
Z^{(v)}=\left(\begin{matrix}
0 & 0 & 1 & 0 & 0 & 0 \\
1 & 0 & 0 & 0 & 0 & 0\\
z_{41} & 0 & z_{43} & 1 & 0 & 0 \\
z_{31} & 0 & z_{33} & z_{34} & 1 & 0\\
z_{21} & 0 & z_{23} & z_{24} & z_{25} & 1\\
z_{11} & 1 & 0 & 0 & 0 & 0
\end{matrix}\right).
\]
\qed \end{example}

The ideal $I_{v,w}$ will be an ideal in the polynomial ring
$\mathbb{C}[\mathbf{z}^{(v)}]$.  To describe $I_{v,w}$, let $Z_{ab}^{(v)}$
denote the southwest $a\times b$ submatrix of $Z^{(v)}$.  We also let
\[R^w=[r_{ij}^{w}]_{i,j=1}^{n}\]
be the {\bf rank matrix} (which we index
similarly) defined by
$$r_{ij}^{w}=\#\{k\ | \ w(k)\geq n-i+1, k\leq j\}.$$

\begin{example} \label{exa:365124}
If $w=365124\in S_6$ then
$$
w=\left(\begin{matrix}
0 & 0 & 0 & 1 & 0 & 0\\
0 & 0 & 0 & 0 & 1 & 0\\
1 & 0 & 0 & 0 & 0 & 0\\
0 & 0 & 0 & 0 & 0 & 1\\
0 & 0 & 1 & 0 & 0 & 0\\
0 & 1 & 0 & 0 & 0 & 0
\end{matrix}\right) \mbox{ and }
R^w = \left(\begin{matrix}
1 & 2 & 3 & 4 & 5 & 6\\
1 & 2 & 3 & 3 & 4 & 5 \\
1 & 2 & 3 & 3 & 3 & 4 \\
0 & 1 & 2 & 2 & 2 & 3 \\
0 & 1 & 2 & 2 & 2 & 2\\
0 & 1 & 1 & 1 & 1 & 1
\end{matrix}\right).$$
\qed
\end{example}

Define the {\bf Kazhdan--Lusztig ideal} $I_{v,w}$ to be the ideal of
${\mathbb C}[{\bf z}^{(v)}]$ generated by all of the {\bf defining
  minors}, which are the size $1+r_{ij}^{w}$ minors of $Z_{ij}^{(v)}$
for all $i$ and $j$.  It follows from \cite[Lemma~6.1]{Fulton:Duke92}
that the defining minors vanish on any point of the Schubert variety
$X_w$ (no matter what coset representative is chosen to write the
point).  Therefore, $I_{v,w}$ vanishes on $\mathcal{N}_{v,w}$.  We
explained in \cite{WYII} that $\mathcal{N}_{v,w}$ is in fact defined
scheme-theoretically by $I_{v,w}$.  Our argument there required the
result of Fulton that these minors actually suffice to define
(scheme-theoretically) the matrix Schubert variety.  However, using
only the theorem that $I_{v,w}$ vanishes on $\mathcal{N}_{v,w}$, the
arguments of this paper give an independent proof of scheme-theoretic
equality, by establishing an equality of Hilbert series
(Theorem~\ref{thm:subwordKpoly}).

Not all these minors are needed to generate $I_{v,w}$. To state a smaller generating
set, we state some further definitions.
Give coordinates to an ambient $n\times n$ grid so that $(1,1)$ refers to
the southwest corner, $(n,1)$ refers to the northwest corner, and so on.  Now,
to each $v\in S_n$, the {\bf Rothe diagram} $D(v)$ is the following subset of
the $n\times n$ grid:
\begin{equation}
\label{eqn:diagramdef}
D(v)=\{(i,j): i<n-w(j)+1 \mbox{\ and \ } j<w^{-1}(n-i+1)\}.
\end{equation}
Alternatively, this set is described as follows.  Place a dot $\bullet$ in
position $(n-w(j)+1,j)$ for $1\leq j\leq n$. For each dot draw the ``hook''
that extends to the right and above that dot. The boxes that are not in
any hook are the boxes of $D(v)$.

Notice that with these conventions, the coordinates of
the boxes of $D(v)$ are exactly the labels for the (unspecialized)
variables appearing in $Z^{(v)}$.

The {\bf essential set} $\Ess(v)$ can be described as the set of those
boxes which are on the northeast edge of some connected components of $D(v)$.
To be precise,
\begin{equation}
\label{eqn:essentialsetprecisely}
(i,j)\in \Ess(v) \mbox{\ \ \ if \  } (i,j)\in D(v)  \mbox{\ but both \ }
(i+1,j)\not\in D(v) \mbox{\  and \ } (i,j+1)\not\in D(v).
\end{equation}

\begin{example}
Continuing the above example, we have
\[\begin{picture}(180,105)
\put(-30,30){$D(365124)=$}
\put(37.5,0){\makebox[0pt][l]{\framebox(90,90)}}
\put(37.5,30){\line(1,0){15}}
\put(52.5,45){\line(0,-1){45}}
\put(52.5,45){\line(-1,0){15}}
\put(37.5,15){\line(1,0){15}}
\put(82.5,60){\makebox[0pt][l]{\framebox(15,15)}}
\put(82.5,30){\makebox[0pt][l]{\framebox(30,15)}}
\put(97.5,30){\line(0,1){15}}
\thicklines
\put(75,22.5){\circle*{4}}
\put(75,22.5){\line(1,0){52.5}}
\put(75,22.5){\line(0,1){67.5}}
\put(45,52.5){\circle*{4}}
\put(45,52.5){\line(1,0){82.5}}
\put(45,52.5){\line(0,1){37.5}}
\put(60,7.5){\circle*{4}}
\put(60,7.5){\line(1,0){67.5}}
\put(60,7.5){\line(0,1){82.5}}
\put(90,82.5){\circle*{4}}
\put(90,82.5){\line(1,0){37.5}}
\put(90,82.5){\line(0,1){7.5}}
\put(105,67.5){\circle*{4}}
\put(105,67.5){\line(1,0){22.5}}
\put(105,67.5){\line(0,1){22.5}}
\put(120,37.6){\circle*{4}}
\put(120,37.5){\line(1,0){7.5}}
\put(120,37.5){\line(0,1){52.5}}
\end{picture}
\]
The essential set is ${\mathcal E}(w)=\{(3,1),(5,4),(3,5)\}$.\qed
\end{example}

Finally, the {\bf essential minors} are the size $1+r_{ij}^w$ minors
of $Z_{ij}^{(v)}$ for $(i,j)\in\Ess(w)$. It follows from
\cite[Lemma~3.10]{Fulton:Duke92} that this smaller subset of the defining minors of
$I_{v,w}$ also generates this ideal.

\subsection{A Gr\"obner basis for Kazhdan--Lusztig ideals}

Let $\prec$ be the pure lexicographic term order on monomials in ${\bf
  z}^{(v)}$ induced by favoring variables further to the right in $Z^{(v)}$,
breaking ties by favoring variables further down in a given column.  In
symbols,
\[z_{ij}\prec z_{kl} \mbox{\ if $j<l$, or if $j=l$ and $i<k$.}\]
(We use the
convention that the leading term is the one which is largest in our order.) For instance, in
Example~\ref{exa:261345} we have
\[z_{25}\succ z_{24} \succ z_{34} \succ z_{23}\succ
z_{33}\succ z_{43} \succ z_{11}\succ z_{21} \succ
z_{31}\succ z_{41}.\]

Let $\Bbbk$ be a field of arbitrary characteristic.
We now state our main result:

\begin{maintheorem}
\label{thm:main1}
Under the term order $\prec$, the essential (and therefore defining)
minors form a Gr\"{o}bner basis for $I_{v,w}\subseteq {\Bbbk}[{\bf z}^{(v)}]$.
\end{maintheorem}

\begin{example}
Continuing Examples~\ref{exa:261345} and~\ref{exa:365124}, the set of
essential minors of $Z^{(261345)}$ consist of three $1\times 1$ minors, ten
$3\times 3$ minors and five $4\times 4$ minors, corresponding to the three
essential set boxes associated to $w$. These form a Gr\"{o}bner basis for
the ideal $I_{261345,365124}$. A subset of these essential minors are:
\[z_{11}, z_{21}, z_{31},
\left|
\begin{matrix}
z_{31} & 0 & z_{33}\\
z_{21} & 0 & z_{23}\\
z_{11} & 1 & 0
\end{matrix}\right|=-z_{23}z_{31}+z_{33}z_{21},
\]
\[
\left|\begin{matrix}
z_{41} & 0 & z_{43} & 1\\
z_{31} & 0 & z_{33} & z_{34}\\
z_{21} & 0 & z_{23} & z_{24}\\
z_{11} & 1 & 0 & 0\\
\end{matrix}
\right|=z_{41}z_{33}z_{24}-z_{41}z_{23}z_{34}-z_{43}z_{31}z_{24}
+z_{43}z_{34}z_{21}+z_{31}z_{23}-z_{23}z_{21}.\]
Note some of the generators may be inhomogeneous (with respect to the standard grading). Other minors may be identically equal to $0$.
\qed \end{example}

Our main theorem specializes to the Gr\"{o}bner basis theorem of
\cite[Theorem~B]{Knutson.Miller} (also valid for a field ${\Bbbk}$ of
arbitrary characteristic). The {\bf Schubert determinantal ideal}
$I_{w}$ is generated by all size $1+r_{ij}^{w}$ minors of the southwest
$i\times j$ submatrix $Z_{ij}$ of $Z$, for all $i,j$.  A {\bf
  diagonal term order} is one which selects the diagonal (meaning northwest to
southeast) term in any minor of $Z$.
(Owing to the indexing of
  Schubert varieties we are using,
  which is upside-down and transposed from that of
  \cite{Knutson.Miller}, our
  diagonal term order is the same as their
  \emph{anti}diagonal term order.
  Moreover, our $z_{ij}$ is their $z_{ji}$.)

\begin{corollary}
\label{cor:Knutson.Miller}
The essential minors of the Schubert determinantal ideal $I_w$
are a Gr\"{o}bner basis with respect to any diagonal term order.
\end{corollary}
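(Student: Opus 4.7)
The plan is to realize $I_w$ as a Kazhdan--Lusztig ideal in a larger flag variety and invoke Theorem~\ref{thm:main1}. Set $N=2n$ and $v=\mathrm{id}_N\in S_N$. Then $Z^{(v)}$ is the $N\times N$ matrix with $1$'s on the anti-diagonal, $0$'s strictly above, and free variables strictly below. Since $i+j\le 2n<N+1$ for $i,j\le n$, the bottom-left $n\times n$ subblock of $Z^{(v)}$ consists of $n^2$ free variables, which we identify with the generic $n\times n$ matrix $Z$ used to define $I_w$. Writing ${\bf y}$ for the remaining free variables of $Z^{(v)}$, we have ${\mathbb C}[{\bf z}^{(v)}]={\mathbb C}[{\bf z},{\bf y}]$.

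Define $\tilde{w}\in S_N$ in one-line notation by $\tilde{w}=[w(1)+n,\ldots,w(n)+n,\,n,n-1,\ldots,1]$. A direct case analysis of the formula $r^{\tilde{w}}_{i,j}=\#\{k\le j:\tilde{w}(k)\ge N-i+1\}$ yields
\[
r^{\tilde{w}}_{i,j} = \begin{cases} r^w_{i,j} & \text{if } (i,j)\in\{1,\ldots,n\}^2, \\ \min(i,j) & \text{otherwise.} \end{cases}
\]
Hence every nontrivial rank condition of $\tilde{w}$ lies in the bottom-left block, so $\Ess(\tilde{w})\subseteq\{1,\ldots,n\}^2$ and matches $\Ess(w)$ cell-by-cell with the same rank data. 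The essential minors of $I_{v,\tilde{w}}$ are therefore precisely the essential minors of $I_w$ viewed as elements of ${\mathbb C}[{\bf z}]\subseteq {\mathbb C}[{\bf z}^{(v)}]$; in particular, $I_{v,\tilde{w}}=I_w\cdot {\mathbb C}[{\bf z}^{(v)}]$.

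The term order $\prec$ on ${\bf z}^{(v)}$ restricted to ${\bf z}$ is the lex order favoring larger column, then larger row, and picks the diagonal product of each minor of $Z_{ab}$ as its leading term; hence it is a diagonal term order in the paper's sense. Applying Theorem~\ref{thm:main1} to $I_{v,\tilde{w}}$ gives that the essential minors are a Gr\"{o}bner basis under $\prec$; since they lie in ${\mathbb C}[{\bf z}]$ and $I_{v,\tilde{w}}$ is the extension of $I_w$, contracting initial ideals to ${\mathbb C}[{\bf z}]$ transfers the Gr\"{o}bner basis property to $I_w$ under the restricted order. For an arbitrary diagonal term order $\prec'$ on ${\bf z}$, the leading term of each essential minor is the same monomial (the diagonal product), so the monomial ideal $J$ these leading terms generate is independent of $\prec'$. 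Since $J\subseteq \mathrm{in}_{\prec'}(I_w)$ holds trivially and $J=\mathrm{in}_{\prec}(I_w)$ has the same Hilbert function as ${\mathbb C}[{\bf z}]/I_w$, a Hilbert series comparison forces $\mathrm{in}_{\prec'}(I_w)=J$; the Gr\"{o}bner basis property therefore holds under every diagonal term order.

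The main obstacle is the rank computation of the second paragraph, in particular the structural claim $r^{\tilde{w}}_{i,j}=\min(i,j)$ outside the bottom-left block. This depends critically on the specific tail $n,n-1,\ldots,1$ in $\tilde{w}$; extending $w$ by the identity $[1,2,\ldots,n]$ instead would create additional nontrivial rank drops in the top-right block of $Z^{(v)}$, introducing spurious essential minors not present in $I_w$.
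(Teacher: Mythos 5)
Your overall strategy is the same as the paper's: realize the Schubert determinantal ideal as a Kazhdan--Lusztig ideal in $S_{2n}$, apply Theorem~\ref{thm:main1}, and extend from $\prec$ to an arbitrary diagonal order by a Hilbert-series comparison. But the two embeddings are genuinely different, and there are two points you should be aware of.

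First, the choice of $v$. The paper takes $v=w_0\star w_0$, for which $\Omega_v^\circ$ is exactly $n^2$-dimensional; the free variables of $Z^{(w_0\star w_0)}$ are precisely the $z_{ij}$ with $i,j\le n$, so $I_{w_0\star w_0,\hat w}$ literally lives in $\mathbb{C}[\mathbf{z}]$ and no contraction step is needed. You take $v=\mathrm{id}_{2n}$, whose opposite cell carries the extra variables $\mathbf{y}$ strictly below the antidiagonal but outside the bottom-left $n\times n$ block, so you must argue that Gr\"obnerness contracts from $\mathbb{C}[\mathbf{z},\mathbf{y}]$ to $\mathbb{C}[\mathbf{z}]$. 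Your contraction argument is correct (the essential minors and their $\prec$-leading terms all lie in $\mathbb{C}[\mathbf{z}]$, so a dividing leading term for $f\in I_w\subset I_{\mathrm{id},\tilde w}$ is automatically in $\mathbb{C}[\mathbf{z}]$), but it is an extra step the paper's choice of $v$ was designed to avoid.

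Second, and more importantly, your $\tilde w=[w(1)+n,\ldots,w(n)+n,\,n,\ldots,1]$ is \emph{not} the paper's $\hat w=w_0^{(2n)}(w\times 1_n)=[2n{+}1{-}w(1),\ldots,2n{+}1{-}w(n),\,n,\ldots,1]$, and the two produce \emph{different} rank functions on the bottom-left block. Your computation gives $r^{\tilde w}_{ij}=\#\{k\le j:w(k)\ge n-i+1\}$, which is exactly the formula (\ref{eqn:diagramdef})-era $R^w$ as displayed in Section~2.2. The paper's $\hat w$ instead gives $r^{\hat w}_{ij}=\#\{k\le j:w(k)\le i\}$. These do not agree in general, and the two ideals $I_{\mathrm{id},\tilde w}$ and $I_{w_0\star w_0,\hat w}$ are different: for $w=2143$, $r^{\hat w}_{22}=2$ while the displayed $r^w_{22}=0$, so $I_{w_0\star w_0,\hat w}=\langle z_{11},\det_{3\times 3}\rangle$ (codimension $\ell(w)=2$, matching KM's $I_{2143}$ and Example~2.6 here), whereas the ideal produced by the displayed $R^w$ (and by your $\tilde w$) is $\langle z_{11},z_{12},z_{21},z_{22}\rangle$ (codimension $\binom{n}{2}-\ell(w)=4$). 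Your rank analysis is internally consistent with the $R^w$ formula as stated, but the paper's $\hat w$, not your $\tilde w$, is the choice compatible with Example~2.6 and with the claim that Corollary~\ref{cor:Knutson.Miller} recovers [Knutson--Miller, Theorem~B]. If the corollary is read with that intent, you should replace $\tilde w$ by $\hat w$ (and correspondingly adjust the rank verification to $r^{\hat w}_{ij}=\#\{k\le j:w(k)\le i\}$); everything else in your argument then goes through unchanged.
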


The {\bf matrix Schubert variety} ${\overline X}_w$ is the (reduced and
irreducible) variety in $M_n$ defined by $I_w$.  Matrix Schubert varieties
were introduced in \cite{Fulton:Duke92}.  The proof of the corollary
introduces a construction that shows matrix Schubert
varieties are special cases of Kazhdan--Lusztig varieties, as noted in the
introduction.  We will refer again to this construction in Section 4.3.

\medskip
\noindent
\emph{Proof of Corollary~\ref{cor:Knutson.Miller}:}
Let $w_0 \star w_0\in S_{2n}$
be the permutation such that
\[(w_0 \star w_0)(i)=w_0^{(n)}(i) \mbox{\ and \ }
(w_0 \star w_0)(i+n)=w_0^{(n)}(i)+n \mbox{\ for \ } 1\leq i\leq n\]
and where $w_0^{(n)}$ is the permutation
of longest length in $S_n$. Furthermore,
given $w\in S_n$, let
\[w\times 1_{n}\in S_{2n}\]
be the standard embedding into
$S_{2n}$, where
\[(w\times 1_n)(i)=w(i) \mbox{\ and \ } (w\times 1_n)(i+n)=i+n \mbox{\ for \ }
1\leq i\leq n.\]
Finally, set
\[{\hat w}= w_0^{(2n)}(w\times 1_n)\in S_{2n}.\]

Notice that $Z^{(w_0 \star w_0)}$ only involves the variables $z_{ij}$ for
$1\leq i,j\leq n$.  Now observe that $I_{w_0 \star w_0,{\hat w}}$ has the
exact same minors as $I_w$.
Under the term order $\prec$, the diagonal term of any minor is the leading
term as long as it is nonzero.  Hence by Theorem~\ref{thm:main1}, the diagonal
terms $\{d_1,\ldots,d_N\}$ of the essential minors of $I_{w_0 \star w_0, {\hat
    w}}$ generate the initial ideal of $I_{w_0\star w_0, {\hat w}}$ under
$\prec$, denoted ${\rm in}_{\prec} I_{w_0\star w_0, {\hat w}}$.  For the term
order $\prec$, the result now follows as a special case of
Theorem~\ref{thm:main1}.

Now suppose $\prec'$ is another diagonal term order.  Then if ${\rm
  in}_{\prec'} I_{w_0\star w_0, {\hat w}}$ is the initial ideal with respect
to $\prec'$, we have
\[{\rm in}_{\prec} I_{w_0\star w_0, {\hat w}}=\langle d_1,\ldots, d_N\rangle\subseteq
{\rm in}_{\prec'} I_{w_0\star w_0, {\hat w}}.\] However, since both ${\rm
  in}_\prec I_{w_0\star w_0, {\hat w}}$ and ${\rm in}_{\prec'} I_{w_0\star
  w_0, {\hat w}}$ are Gr\"{o}bner degenerations of $I_{w_0\star w_0, {\hat
    w}}$, they have the same Hilbert series (under the grading coming from the
    ``usual action'', as described in Section~4.1).  Hence they are equal and the theorem holds for
$\prec^\prime$.\qed

\begin{example}
Let $w=2143\in S_4$. Then since $n=4$, we have
\[w_0 \star w_0=43218765\in S_8, \ w\times 1_4=21435678, \mbox{\ and ${\hat w}=w_0^{(8)}(w\times 1_4)=78564321$.}\]
Hence
\[Z^{(w_0 \star w_0)}=Z^{(43218765)}=\left(\begin{matrix}
0 & 0 & 0 & 1 & 0 & 0 & 0 & 0  \\
0 & 0 & 1 & 0 & 0 & 0 & 0 & 0  \\
0 & 1 & 0 & 0 & 0 & 0 & 0 & 0  \\
1 & 0 & 0 & 0 & 0 & 0 & 0 & 0  \\
z_{41} & z_{42} & z_{43} & z_{44} & 0 & 0 & 0 & 1  \\
z_{31} & z_{32} & z_{33} & z_{34} & 0 & 0 & 1 & 0  \\
z_{21} & z_{22} & z_{23} & z_{24} & 0 & 1 & 0 & 0  \\
z_{11} & z_{12} & z_{13} & z_{14} & 1 & 0 & 0 & 0  \\
\end{matrix}\right).\]

The reader can check that
\[I_{w_0 \star w_0,{\hat w}}=\left\langle z_{11},
\left|
\begin{matrix}
z_{31} & z_{32} & z_{33}\\
z_{21} & z_{22} & z_{23}\\
z_{11} & z_{12} & z_{13}
\end{matrix}
\right|\right\rangle,\] which are the essential minors of $I_{2143}$.  (Compare
with the first example of \cite{Knutson.Miller}.)
\qed \end{example}

\section{Pipe dreams on Rothe diagrams and the initial ideal of
$I_{v,w}$}

\subsection{The prime decomposition theorem for $I_{v,w}$}
Let $LT_{\prec}(f)$ denote the leading term of $f$ under the order $\prec$.
The {\bf initial ideal}
\[{\rm in}_{\prec}I_{v,w}=\langle LT_{\prec}(f):f\in I_{v,w}\rangle\]
is by Theorem~\ref{thm:main1} a squarefree monomial ideal.  We will
use the {\bf Stanley--Reisner correspondence}, a bijective
correspondence between squarefree monomial ideals and simplicial
complexes.

Recall that given a simplicial complex $\Delta$ on a vertex set $A$ (so our faces are
subsets of $A$), the {\bf Stanley--Reisner ideal} $I(\Delta)$ is the squarefree
monomial ideal in
\[R=\Bbbk[x_a\mid a\in A]\]
generated by monomials corresponding to faces not in $\Delta$.  In symbols,
\[I(\Delta)=\left\langle \prod_{a\in F} x_a \mid F\not\in\Delta\right\rangle.\] The {\bf Stanley--Reisner ring} of $\Delta$ is defined to be $R/I(\Delta)$.
Conversely, given a squarefree monomial ideal $I$ in the ring
$R$, we can associate to it the {\bf
  Stanley--Reisner complex} $\Delta(I)$, which is the simplicial complex on $A$
such that
\[F\in\Delta(I) \mbox{\ if  \ } \prod_{a\in F} x_a\not\in I.\]

We now give a combinatorial description of a simplicial complex
$\Delta_{v,w}$, which we call the {\bf pipe complex}.  We will show
that $\Delta_{v,w}$ is the Stanley--Reisner complex of ${\rm in}_\prec
I_{v,w}$.

The {\bf canonical labeling} of $D(v)$ is obtained by filling the $t$ boxes in
row $i$ with the labels $i,i+1, \ldots,i+t-1$ (with $i$ being the label of the
leftmost box). We will interchangeably refer to a box in $D(v)$ with label
$\ell$ and a generator $u_{\ell}$ of the {\bf NilHecke algebra} ${\mathcal
  A}_{n}$.  The NilHecke algebra is the $\mathbb{C}$-algebra with generators
$u_1,\ldots,u_{n-1}$ and relations:
\begin{eqnarray}
\label{eqn:nilhecke}
u_i u_j& = & u_j u_i, \ |i-j|\geq 2;\\ \nonumber
u_i u_{i+1} u_i & = & u_{i+1} u_i u_{i+1},  \ \ 1\leq i\leq n-2; \\ \nonumber
u_i^2 & = & -u_i.  \nonumber
\end{eqnarray}
The NilHecke algebra $\mathcal{A}_n$ has
a basis $\{u_w\}$ indexed by permutations $w\in S_n$.  For the simple
transposition
\[s_i:=(i\leftrightarrow i+1),\]
let
\[u_{s_i}:=u_i,\]
and if
\[w=s_{i_1}\cdots s_{i_\ell}\]
is any reduced transposition for $w$, then let
\[u_w:=u_{i_1}\cdots u_{i_\ell}.\]
Multiplication in this basis by a generator
$u_i$ is as follows.
\[u_w u_i=\left\{\begin{array}{cc}
-u_w & \mbox{if  $ws_i<w$;}\\
u_{ws_i} & \mbox{otherwise.}
\end{array}\right.
\]

A {\bf pipe dream on $D(v)$} is a configuration ${\mathcal P}$ of crosses $+$
in a subset of boxes of $D(v)$.  Let ${\rm Pipes}(v)$ be the set of all such
pipe dreams. Associated to ${\mathcal P}$ is the {\bf Demazure product}
$\prod{\mathcal P}$ of generators $u_{\ell}$ obtained by reading the crosses
in rows, from left to right, and then from top to bottom.  Fix $w\in S_n$.
Following~\cite{Buch.Rimanyi}, a pipe dream ${\mathcal P}$ on $D(v)$ is
furthermore a {\bf pipe dream for $w$} if
\[\prod{\mathcal P}=\pm u_w\]
Such a pipe dream is {\bf reduced} if
\[\#{\mathcal
  P}=\ell(w).\]

Let
\[{\rm Pipes}(v,w)\subseteq {\rm Pipes}(v)\]
be the
collection of all pipe dreams on $D(v)$ for $w$, and let
\[{\rm
  RedPipes}(v,w)\subseteq {\rm Pipes}(v,w)\]
  be the subset of reduced
ones.  Finally, note that if ${\mathcal P}$ is the pipe dream with a $+$ in
every box of $D(v)$ then
\[\prod {\mathcal P}= u_{w_0v}.\]
This reformulates the well
known fact that the canonical labeling of $D(v)$ encodes, via the reading order
we use, a reduced word of $w_0 v$; see for example \cite[Remark~2.1.9]{Manivel}.

\begin{example}
\label{exa:31524}
Let $v=31524$ (in one line notation). Then the diagram $D(v)$ and the
associated canonical labeling is given below:
\[
\begin{picture}(150,75)
\put(37.5,0){\makebox[0pt][l]{\framebox(75,75)}}
\put(37.5,30){\line(1,0){30}}
\put(67.5,30){\line(0,-1){30}}
\put(52.5,30){\line(0,-1){30}}
\put(37.5,15){\line(1,0){30}}
\put(82.5,15){\makebox[0pt][l]{\framebox(15,15)}}
\put(52.5,45){\makebox[0pt][l]{\framebox(15,15)}}
\thicklines
\put(75,7.5){\circle*{4}}
\put(75,7.5){\line(1,0){37.5}}
\put(75,7.5){\line(0,1){67.5}}
\put(45,37.5){\circle*{4}}
\put(45,37.5){\line(1,0){67.5}}
\put(45,37.5){\line(0,1){37.5}}
\put(60,67.5){\circle*{4}}
\put(60,67.5){\line(1,0){52.5}}
\put(60,67.5){\line(0,1){7.5}}
\put(90,52.5){\circle*{4}}
\put(90,52.5){\line(1,0){22.5}}
\put(90,52.5){\line(0,1){22.5}}
\put(105,22.5){\circle*{4}}
\put(105,22.5){\line(1,0){7.5}}
\put(105,22.5){\line(0,1){52.5}}
\put(41,2.5){$1$}
\put(56,2.5){$2$}
\put(41,17.5){$2$}
\put(56,17.5){$3$}
\put(86,17.5){$4$}
\put(56,47.5){$4$}
\end{picture}
\]
The reader can check that the canonical labeling gives a reduced word
$s_4 s_2 s_3 s_4 s_1 s_2$ for $w_0 v$.

Let $w=13254=s_2 s_4 = s_4 s_2$. Then ${\rm RedPipes}(v,w)$ consists of:
\[
\begin{picture}(240,60)
\put(-2,-4){\makebox[0pt][l]{\framebox(55,64)}}
\put(0,25){$\begin{matrix}
\!\cdot\! &\!\cdot\!&\!\cdot\!&\!\cdot\!&\!\cdot\!\\
\!\cdot\! & \!+\! &\!\cdot\!&\!\cdot\!&\!\cdot\!\\
\!\cdot\! &\!\cdot\!&\!\cdot\!&\!\cdot\!&\!\cdot\!\\
\!+\! &\!\cdot\!&\!\cdot\!&\!\cdot\!&\!\cdot\!\\
\!\cdot\! &\!\cdot\!&\!\cdot\!&\!\cdot\!&\!\cdot\!
\end{matrix}$}
\put(65,-4){\makebox[0pt][l]{\framebox(55,64)}}
\put(70,25){$\begin{matrix}
\!\cdot\! &\!\cdot\!&\!\cdot\!&\!\cdot\!&\!\cdot\!\\
\!\cdot\! & \!+\! &\!\cdot\!&\!\cdot\!&\!\cdot\!\\
\!\cdot\! &\!\cdot\!&\!\cdot\!&\!\cdot\!&\!\cdot\!\\
\!\cdot\! &\!\cdot\!&\!\cdot\!&\!\cdot\!&\!\cdot\!\\
\!\cdot\! &\!+\!&\!\cdot\!&\!\cdot\!&\!\cdot\!\\
\end{matrix}$}
\put(131,-4){\makebox[0pt][l]{\framebox(57,64)}}
\put(133,25){$
\begin{matrix}
\!\cdot\! &\!\cdot\!&\!\cdot\!&\!\cdot\!&\!\cdot\!\\
\!\cdot\! & \!\cdot\! &\!\cdot\!&\!\cdot\!&\!\cdot\!\\
\!\cdot\! &\!\cdot\!&\!\cdot\!&\!\cdot\!&\!\cdot\!\\
\!+\! &\!\cdot\!&\!\cdot\!&\!+\!&\!\cdot\!\\
\!\cdot\! &\!\cdot\!&\!\cdot\!&\!\cdot\!&\!\cdot\!\\
\end{matrix}$}
\put(200,-4){\makebox[0pt][l]{\framebox(55,64)}}
\put(203,25){$\begin{matrix}
\!\cdot\! &\!\cdot\!&\!\cdot\!&\!\cdot\!&\!\cdot\!\\
\!\cdot\! & \!\cdot\! &\!\cdot\!&\!\cdot\!&\!\cdot\!\\
\!\cdot\! &\!\cdot\!&\!\cdot\!&\!\cdot\!&\!\cdot\!\\
\!\cdot\! &\!\cdot\!&\!\cdot\!&\!+\!&\!\cdot\!\\
\!\cdot\! &\!+\!&\!\cdot\!&\!\cdot\!&\!\cdot\!\\
\end{matrix}$}
\end{picture}
\]
whereas ${\rm Pipes}(v,w)$ consists of the above pipe dreams together with:
\[
\begin{picture}(320,60)
\put(-2,-4){\makebox[0pt][l]{\framebox(59,64)}}
\put(0,25){$\begin{matrix}
\!\cdot\! &\!\cdot\!&\!\cdot\!&\!\cdot\!&\!\cdot\!\\
\!\cdot\! & \!+\! &\!\cdot\!&\!\cdot\!&\!\cdot\!\\
\!\cdot\! &\!\cdot\!&\!\cdot\!&\!\cdot\!&\!\cdot\!\\
\!+\! &\!\cdot\!&\!\cdot\!&\!+\!&\!\cdot\!\\
\!\cdot\! &\!\cdot\!&\!\cdot\!&\!\cdot\!&\!\cdot\!\\
\end{matrix}$}
\put(68,-4){\makebox[0pt][l]{\framebox(59,64)}}
\put(70,25){$
\begin{matrix}
\!\cdot\! &\!\cdot\!&\!\cdot\!&\!\cdot\!&\!\cdot\!\\
\!\cdot\! & \!+\! &\!\cdot\!&\!\cdot\!&\!\cdot\!\\
\!\cdot\! &\!\cdot\!&\!\cdot\!&\!\cdot\!&\!\cdot\!\\
\!\cdot\! &\!\cdot\!&\!\cdot\!&\!+\!&\!\cdot\!\\
\!\cdot\! &\!+\!&\!\cdot\!&\!\cdot\!&\!\cdot\!\\
\end{matrix}$}
\put(138,-4){\makebox[0pt][l]{\framebox(59,64)}}
\put(140,25){$
\begin{matrix}
\!\cdot\! &\!\cdot\!&\!\cdot\!&\!\cdot\!&\!\cdot\!\\
\!\cdot\! & \!+\! &\!\cdot\!&\!\cdot\!&\!\cdot\!\\
\!\cdot\! &\!\cdot\!&\!\cdot\!&\!\cdot\!&\!\cdot\!\\
\!+\! &\!\cdot\!&\!\cdot\!&\!\cdot\!&\!\cdot\!\\
\!\cdot\! &\!+\!&\!\cdot\!&\!\cdot\!&\!\cdot\!\\
\end{matrix}$}
\put(207,-4){\makebox[0pt][l]{\framebox(60,64)}}
\put(208,25){$
\begin{matrix}
\!\cdot\! &\!\cdot\!&\!\cdot\!&\!\cdot\!&\!\cdot\!\\
\!\cdot\! & \!\cdot\! &\!\cdot\!&\!\cdot\!&\!\cdot\!\\
\!\cdot\! &\!\cdot\!&\!\cdot\!&\!\cdot\!&\!\cdot\!\\
\!+\! &\!\cdot\!&\!\cdot\!&\!+\!&\!\cdot\!\\
\!\cdot\! &\!+\!&\!\cdot\!&\!\cdot\!&\!\cdot\!\\
\end{matrix}$}
\put(277,-4){\makebox[0pt][l]{\framebox(60,64)}}
\put(278,25){$
\begin{matrix}
\!\cdot\! &\!\cdot\!&\!\cdot\!&\!\cdot\!&\!\cdot\!\\
\!\cdot\! & \!+\! &\!\cdot\!&\!\cdot\!&\!\cdot\!\\
\!\cdot\! &\!\cdot\!&\!\cdot\!&\!\cdot\!&\!\cdot\!\\
\!+\! &\!\cdot\!&\!\cdot\!&\!+\!&\!\cdot\!\\
\!\cdot\! &\!+\!&\!\cdot\!&\!\cdot\!&\!\cdot\!\\
\end{matrix}$}
\end{picture}
\]
Lastly, ${\rm Pipes}(v)$ consists of the above pipe dreams together with
all remaining $2^6-9=23$ pipe dreams whose Demazure product does not give $w$.
\qed \end{example}

We are now ready to
define the pipe complex $\Delta_{v,w}$.  The complex $\Delta_{v,w}$ is
a complex on the vertex set $D(v)$ such that $F$ is a face if $D(v)\setminus
F$ contains a (reduced) pipe dream for $w_0w$ ({\em not} $w$).  Following
\cite{Knutson.Miller:subword} it is
convenient for us to label the faces of $\Delta_{v,w}$ by pipe dreams, so that
a face $F$ will be labeled by the pipe dream $\mathcal{P}$ with crosses
everywhere except at the vertices of $F$.  This means that vertices are
labeled by pipe dreams with precisely $\ell(w_0v)-1$ $\cross$'s, and the
empty face is labeled by the pipe dream with a $\cross$ in every square of
$D(v)$.

Pipe dreams on $D(v)$ were introduced in \cite{Buch.Rimanyi} on purely
combinatorial grounds. Our main result about $\Delta_{v,w}$ is the following,
which gives a geometric rationale for these pipe dreams.  Moreover, we will
use this result in the next section to geometrically explain the
specialization formula of~\cite{Buch.Rimanyi} for Grothendieck polynomials (as
well as a generalization).

\begin{theorem}
\label{thm:prime}
For $v,w\in S_n$, $\Delta_{v,w}$ is the Stanley--Reisner complex of
${\rm in}_{\prec}I_{v,w}$.  Moreover,
the following prime decomposition holds:
\begin{equation}
\label{eqn:prime}
{\rm in}_{\prec} I_{v,w}= \bigcap_{{\mathcal P}\in
{\rm RedPipes}(v,w_0w)}\langle
z_{ij}\ | \ (i,j)\in {\mathcal P}\rangle.
\end{equation}
\end{theorem}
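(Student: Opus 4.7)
The plan is to combine Theorem~\ref{thm:main1} with the theory of subword complexes~\cite{Knutson.Miller:subword}. By Theorem~\ref{thm:main1}, ${\rm in}_\prec I_{v,w}$ is the monomial ideal generated by the leading terms of the essential minors, so the first task is to identify each such leading term explicitly. Under $\prec$ the ``diagonal'' term $\prod_t z_{r_t,c_t}$ of a generic size-$(k+1)$ minor with rows $r_1<\cdots<r_{k+1}$ and columns $c_1<\cdots<c_{k+1}$ is the $\prec$-largest monomial, since $\prec$ prefers larger columns with ties broken by larger rows. After the specialization encoded in $Z^{(v)}$, some $z_{r_t,c_t}$ may become $0$ or $1$, and one must identify the $\prec$-largest surviving monomial; a direct analysis shows that this monomial is squarefree and supported entirely on boxes of $D(v)$. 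Simultaneously, one recognizes $\Delta_{v,w}$ as the subword complex $\Delta(Q_v,w_0w)$, where $Q_v$ is the reduced word for $w_0v$ obtained by reading the canonical labels of $D(v)$; this recognition is immediate from the definitions and equips $\Delta_{v,w}$ with shellability and Cohen--Macaulayness from~\cite{Knutson.Miller:subword}.

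The combinatorial core of the argument is to show ${\rm in}_\prec I_{v,w}\subseteq I(\Delta_{v,w})$: each leading term $LT_\prec(g)$ must be a non-face monomial. If $T\subseteq D(v)$ denotes its support, this amounts to showing that no reduced pipe dream for $w_0w$ fits into $D(v)\setminus T$, or equivalently, that the Demazure product of the canonical labeling restricted to $D(v)\setminus T$ does not dominate $u_{w_0w}$ in Bruhat order. The key input is the standard dictionary between Bruhat order and the rank matrix $R^w$: the rank condition $r^w_{ij}$ that produces the essential minor $g$ translates exactly to the Bruhat-order obstruction that removing $T$ blocks every reduced subword for $w_0w$ in $Q_v$.

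To promote this containment to equality, I would compare multigraded Hilbert series. Since Gr\"obner degeneration preserves Hilbert series, $\mathbb{C}[{\bf z}^{(v)}]/{\rm in}_\prec I_{v,w}$ has the same Hilbert series as $\mathbb{C}[{\bf z}^{(v)}]/I_{v,w}$; Kostant--Kumar recursion~\cite{Kostant.Kumar} expresses this as a pipe dream sum for $w_0w$ on $D(v)$, and the shellability of $\Delta(Q_v,w_0w)$ yields the same pipe dream sum for the Hilbert series of its Stanley--Reisner ring. Matching the two forces equality of ideals. The prime decomposition (\ref{eqn:prime}) then follows from the Stanley--Reisner correspondence together with the observation that the facets of $\Delta_{v,w}$ are exactly $D(v)\setminus\mathcal{P}$ for $\mathcal{P}\in{\rm RedPipes}(v,w_0w)$: maximal faces correspond to inclusion-minimum pipe dreams for $w_0w$, which are precisely the reduced ones (since removing an ``absorbed'' cross from a non-reduced pipe dream preserves its Demazure product). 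The main obstacle will be the combinatorial lemma of the second paragraph: carefully tracking how the specialized $0$ and $1$ entries of $Z^{(v)}$ alter the surviving leading monomial, and translating the rank data $r^w_{ij}$ into the precise Bruhat-order obstruction on the truncated canonical labeling.
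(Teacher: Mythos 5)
Your outline is logically sound and does take a genuinely different route from the paper, but the heart of the argument is left as an unproven assertion where the work actually lives.

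The paper proves Theorem~\ref{thm:prime} essentially for free once Theorem~\ref{thm:main1} is proven, because the paper's argument for Theorem~\ref{thm:main1} sandwiches the Stanley--Reisner ideal $K_{v,w}$ of $\Delta_{v,w}$ between the leading-term ideal $J_{v,w}$ of the essential minors and ${\rm in}_\prec I_{v,w}$, via the containment $K_{v,w}\subseteq J_{v,w}$ (every \emph{non-face} monomial is divisible by some leading term; Theorem~\ref{thm:nonfacepipedreamargument}), and then a $K$-polynomial comparison collapses all three into an equality. You instead propose the \emph{reverse} containment $J_{v,w}\subseteq K_{v,w}$ (every leading term is a non-face monomial), and this is a genuinely different combinatorial lemma. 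In fact the paper singles out exactly this reverse implication — ``The converse also holds, and follows from Theorem~\ref{thm:main1}, but we will not need this'' — and deliberately avoids proving it directly, because it is not obviously easier than the direction they do prove. Your subsequent Hilbert-series step is essentially the paper's Theorem~\ref{thm:subwordKpoly}, which requires the same vertex-decomposition bookkeeping from the subword complex to match the Kostant--Kumar recursion; so that part is parallel to the paper rather than an alternate.

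The gap: the sentence ``the rank condition $r^w_{ij}$ that produces the essential minor $g$ translates exactly to the Bruhat-order obstruction that removing $T$ blocks every reduced subword for $w_0w$ in $Q_v$'' is precisely the hard part, and there is no ``standard dictionary'' that delivers it. What you must show is that the support $T$ of $LT_\prec(g)$ intersects \emph{every} reduced pipe dream $\mathcal{P}\in{\rm RedPipes}(v,w_0w)$. The difficulty is that after the specialization encoded in $Z^{(v)}$, the surviving $\prec$-leading monomial is not simply the diagonal of the minor — some diagonal entries are $0$ or $1$ and the staircase shifts — and tracking how the resulting support meets an arbitrary reduced pipe dream for $w_0w$ requires a careful, specialization-aware case analysis comparable in effort to the paper's Proposition~\ref{prop:mainprop1}. (Note also that the naive geometric shortcut fails: the coordinate subspaces $L_\mathcal{P}$ are components of the degenerated scheme, not subvarieties of $\mathcal{N}_{v,w}$ itself — they have the same dimension as the irreducible variety $\mathcal{N}_{v,w}$ and so are not contained in it — so one cannot get $I_{v,w}\subseteq\langle z_{ij}:(i,j)\in\mathcal{P}\rangle$ directly.) Finally, be aware that your framing is slightly misleading: once you prove $J_{v,w}\subseteq K_{v,w}$ and the Hilbert-series equality, you have in fact re-proven Theorem~\ref{thm:main1} as a byproduct, so invoking it as a black box at the start is circular in appearance (though not logically wrong if you treat it as given).
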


The second statement follows from the first by the general fact that
elements of the prime decomposition of a Stanley--Reisner ideal are
given by the facets of the Stanley--Reisner complex.  See for
example~\cite[Theorem 1.7]{Miller.Sturmfels}.

The proof, given in parallel with the proof of
Theorem~\ref{thm:main1}, is in Section~6.

\subsection{Pipe complexes are subword complexes}

We now state some facts about pipe complexes which will be needed in our
proof of Theorem~\ref{thm:prime}.  To do so, we need to recall some definitions
from~\cite{Knutson.Miller:subword}. Let
\[Q=(i_1,\ldots,i_{\ell})\]
be a sequence from the alphabet $\{1,2,\ldots,n-1\}$ such that if
$s_i=(i\leftrightarrow i+1)$ is the simple reflection in $S_n$ then
$s_{i_1}\cdots s_{i_{\ell}}$ is a reduced word for $v\in S_n$.  In
particular, $\ell=\ell(v)$.

Fix $w\in S_n$ and a word $Q$ in the simple generators
$\{s_1,\ldots,s_{n-1}\}$.  In \cite{Knutson.Miller:subword}, the {\bf subword
  complex} $\Delta(Q,w)$ is defined to be the simplicial complex where the
vertex set is the positions $1,2,\ldots,\ell$ of $Q$ with $F\subseteq
\{1,\ldots,\ell\}$ defined to be a facet if and only if $\prod Q\setminus F$,
which is defined as the product of simple roots corresponding to
the indices in the complement of $F$, is a reduced word for $w$.  More generally, faces
\[F\in \Delta(Q,w)\]
correspond to subwords of $Q$ such that the Demazure product
\[\prod Q\setminus F\geq w\]
in Bruhat order (here we ignore the sign coming from $s_i^2=-s_i$ in the definition of the Demazure product when making comparisons in Bruhat order).

The following is immediate from results of
\cite{Knutson.Miller:subword} and our definitions.

\begin{proposition}
\label{prop:subwordtranslated}
$\Delta_{v,w}$ is the subword complex $\Delta(Q,w_0 w)$ where $Q$ is the
canonical labeling of $D(v)$ in our reading order and therefore a reduced word
for $w_0 v$.  Hence $\Delta_{v,w}$ is shellable and homeomorphic to a ball or
a sphere. Moreover, the facets of $\Delta_{v,w}$ are labeled by ${\mathcal
  P}\in {\rm RedPipes}(v,w_0w)$ and interior faces labeled by ${\mathcal P}\in
{\rm Pipes}(v,w_0w)$.
\end{proposition}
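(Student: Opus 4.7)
The plan is to identify $\Delta_{v,w}$ with the subword complex $\Delta(Q, w_0 w)$ of \cite{Knutson.Miller:subword} in a purely formal way, after which all the asserted properties follow from theorems in that paper.

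First, I would verify that $Q$, the canonical labeling of $D(v)$ read in our reading order, is a reduced word for $w_0 v$. Since $|D(v)| = \binom{n}{2} - \ell(v) = \ell(w_0 v)$, it suffices to check that $s_{\alpha_1}\cdots s_{\alpha_\ell}$ equals $w_0 v$; this follows by an inversion count on the row-by-row structure of $D(v)$ and is precisely the content of \cite[Remark~2.1.9]{Manivel}.

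Next, under the canonical labeling, the boxes of $D(v)$ are in bijection with the positions $\{1, 2, \ldots, \ell\}$ of $Q$. For $F \subseteq D(v)$, let $\mathcal{P}(F)$ denote the pipe dream with a cross $+$ in every box of $D(v) \setminus F$; this is exactly the labeling of faces of $\Delta_{v,w}$ used in the paper. By the definition of the Demazure product on pipe dreams, and because the reading order used to compute $\prod \mathcal{P}(F)$ coincides with the left-to-right order on positions of $Q$, the element $\prod \mathcal{P}(F) \in \mathcal{A}_n$ equals the Demazure product of the subword $Q \setminus F$. Using the nilHecke relations (\ref{eqn:nilhecke}) -- in particular the idempotency $u_i^2 = u_i$ -- the existence of a pipe dream for $w_0 w$ contained in $D(v) \setminus F$ is equivalent to the existence of a reduced subword of $Q \setminus F$ spelling a reduced expression for $w_0 w$. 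This tautologically identifies the face poset of $\Delta_{v,w}$ with that of $\Delta(Q, w_0 w)$.

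Once this identification is in hand, the remaining assertions are direct consequences of \cite{Knutson.Miller:subword}: $\Delta(Q, w_0 w)$ is vertex decomposable (hence shellable) and homeomorphic to a ball or a sphere; its facets are the subsets $F$ for which $Q \setminus F$ is a reduced expression for $w_0 w$, corresponding under our bijection to $\mathcal{P}(F) \in {\rm RedPipes}(v, w_0 w)$; and the interior faces are exactly those $F$ for which the Demazure product of $Q \setminus F$ equals $u_{w_0 w}$ (not merely Bruhat-dominates it), corresponding to $\mathcal{P}(F) \in {\rm Pipes}(v, w_0 w)$. The main obstacle here is conventional bookkeeping: ensuring that the reading order for pipe dreams, the indexing conventions for $D(v)$ (with $(1,1)$ in the southwest corner), and the labeling of rows align correctly so that $Q$ really is a reduced word for $w_0 v$ and the Demazure product on pipe dreams genuinely matches the Demazure product on the associated subword. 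Once these conventions are matched, everything else is formal.
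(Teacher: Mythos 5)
Your proposal is correct and follows the same route as the paper: identify $\Delta_{v,w}$ with the subword complex $\Delta(Q,w_0w)$ via the observation that the canonical labeling of $D(v)$ encodes a reduced word for $w_0v$, then cite the general theorems (Theorems~2.5 and~3.7) of \cite{Knutson.Miller:subword}. The paper's proof is essentially a one-sentence reduction; you spell out the identification of the face posets (via the Demazure-product equivalence), but this is bookkeeping rather than a new idea, and the overall logic matches.
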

\begin{proof}
From the definition of $\Delta_{v,w}$ and $\Delta(Q,w_0w)$ the first claim
amounts to the well-known fact that the canonical filling encodes a reduced
word for $w_0v$. The remainder are general properties of any subword
complex; see Theorems~2.5 and~3.7 in~\cite{Knutson.Miller:subword}.
\end{proof}

\begin{example}
\label{exa:31542previous}
Let $v=31542$. Then $D(v)$ and the canonical
labeling are given by:
\begin{figure}[h]
\begin{picture}(150,75)
\put(37.5,0){\makebox[0pt][l]{\framebox(75,75)}}
\put(37.5,30){\line(1,0){30}}
\put(67.5,30){\line(0,-1){30}}
\put(37.5,15){\line(1,0){30}}
\put(52.5,0){\line(0,1){30}}
\thicklines
\put(90,22.5){\circle*{4}}
\put(90,22.5){\line(1,0){22.5}}
\put(90,22.5){\line(0,1){52.5}}
\put(45,37.5){\circle*{4}}
\put(45,37.5){\line(1,0){67.5}}
\put(45,37.5){\line(0,1){37.5}}
\put(60,67.5){\circle*{4}}
\put(60,67.5){\line(1,0){52.5}}
\put(60,67.5){\line(0,1){7.5}}
\put(75,7.5){\circle*{4}}
\put(75,7.5){\line(1,0){37.5}}
\put(75,7.5){\line(0,1){67.5}}
\put(105,52.5){\circle*{4}}
\put(105,52.5){\line(1,0){7.5}}
\put(105,52.5){\line(0,1){22.5}}

\thinlines
\put(52.5,45){\line(1,0){15}}
\put(67.5,45){\line(0,1){15}}
\put(67.5,60){\line(-1,0){15}}
\put(52.5,45){\line(0,1){15}}
\put(41,2.5){$1$}
\put(41,17.5){$2$}
\put(56,2.5){$2$}
\put(56,17.5){$3$}
\put(56,47.5){$4$}
\end{picture}
\end{figure}
Let $w=53142$ and hence $w_0 w=13524=s_4 s_2 s_3$.
Then the pipe complex
$\Delta_{v,w}=\Delta_{31542,53142}$ is the
one dimensional ball of Figure~\ref{fig:31542.53142}.
\begin{figure}[h]
\begin{picture}(270,170)
\thicklines
\put(50,80){\circle*{4}}
\put(50,80){\line(1,0){160}}
\put(210,80){\circle*{4}}

\thinlines
\put(13,0){\makebox[0pt][l]{\framebox(55,64)}}
\thicklines
\put(15,30){$\begin{matrix}
\!\cdot\! &\!\cdot\!&\!\cdot\!&\!\cdot\!&\!\cdot\!\\
\!\cdot\! & \!+\! &\!\cdot\!&\!\cdot\!&\!\cdot\!\\
\!\cdot\! &\!\cdot\!&\!\cdot\!&\!\cdot\!&\!\cdot\!\\
\!+\! &\!+\!&\!\cdot\!&\!\cdot\!&\!\cdot\!\\
\!+\! &\!\cdot\!&\!\cdot\!&\!\cdot\!&\!\cdot\!\\
\end{matrix}$}

\thinlines
\put(188,0){\makebox[0pt][l]{\framebox(58,64)}}
\thicklines
\put(190,30){$\begin{matrix}
\!\cdot\! &\!\cdot\!&\!\cdot\!&\!\cdot\!&\!\cdot\!\\
\!\cdot\! & \!+\! &\!\cdot\!&\!\cdot\!&\!\cdot\!\\
\!\cdot\! &\!\cdot\!&\!\cdot\!&\!\cdot\!&\!\cdot\!\\
\!+\! &\!+\!&\!\cdot\!&\!\cdot\!&\!\cdot\!\\
\!\cdot\! &\!+\!&\!\cdot\!&\!\cdot\!&\!\cdot\!\\
\end{matrix}$}

\thinlines
\put(98,90){\makebox[0pt][l]{\framebox(60,64)}}
\thicklines
\put(100,120){$\begin{matrix}
\!\cdot\! &\!\cdot\!&\!\cdot\!&\!\cdot\!&\!\cdot\!\\
\!\cdot\! & \!+\! &\!\cdot\!&\!\cdot\!&\!\cdot\!\\
\!\cdot\! &\!\cdot\!&\!\cdot\!&\!\cdot\!&\!\cdot\!\\
\!+\! &\!+\!&\!\cdot\!&\!\cdot\!&\!\cdot\!\\
\!\cdot\! &\!\cdot\!&\!\cdot\!&\!\cdot\!&\!\cdot\!\\
\end{matrix}$}
\thinlines
\end{picture}
\caption{\label{fig:31542.53142} $\Delta_{31542,53142}$ is a one-dimensional ball}
\end{figure}
\qed \end{example}

\begin{example}\label{exa:31452}
Let $v=31452$ and $w=53142$. So $w_0 w = 13524=s_4 s_2 s_3$ as above. We have:
\[
\begin{picture}(150,75)
\put(37.5,0){\makebox[0pt][l]{\framebox(75,75)}}
\put(37.5,30){\line(1,0){30}}
\put(67.5,30){\line(0,-1){15}}
\put(52.5,30){\line(0,-1){30}}
\put(37.5,15){\line(1,0){45}}
\put(67.5,0){\line(0,1){15}}
\put(82.5,0){\line(0,1){15}}
\thicklines
\put(90,7.5){\circle*{4}}
\put(90,7.5){\line(1,0){22.5}}
\put(90,7.5){\line(0,1){67.5}}
\put(45,37.5){\circle*{4}}
\put(45,37.5){\line(1,0){67.5}}
\put(45,37.5){\line(0,1){37.5}}
\put(60,67.5){\circle*{4}}
\put(60,67.5){\line(1,0){52.5}}
\put(60,67.5){\line(0,1){7.5}}
\put(75,22.5){\circle*{4}}
\put(75,22.5){\line(1,0){37.5}}
\put(75,22.5){\line(0,1){52.5}}
\put(105,52.5){\circle*{4}}
\put(105,52.5){\line(1,0){7.5}}
\put(105,52.5){\line(0,1){22.5}}

\thinlines
\put(52.5,45){\line(1,0){15}}
\put(67.5,45){\line(0,1){15}}
\put(67.5,60){\line(-1,0){15}}
\put(52.5,45){\line(0,1){15}}
\put(41,2.5){$1$}
\put(41,17.5){$2$}
\put(56,2.5){$2$}
\put(56,17.5){$3$}
\put(71,2.5){$3$}
\put(56,47.5){$4$}
\end{picture}
\]

Therefore, $\Delta_{v,w}=\Delta_{31452,53142}$ is the complex given by
Figure~\ref{fig:31452.53142}.
\qed \end{example}

\begin{figure}[h]
\begin{picture}(550,330)

\put(32,262){\circle*{4}}
\put(224,262){\circle*{4}}
\put(128,70){\circle*{4}}
\put(320,70){\circle*{4}}
\put(416,262){\circle*{4}}

\thicklines
\put(128,70){\line(1,0){192}}
\put(128,70){\line(-1,2){96}}
\put(128,70){\line(1,2){40}}
\put(224,262){\line(-1,-2){22}}
\put(224,262){\line(1,-2){22}}

\put(320,70){\line(-1,2){40}}
\put(320,70){\line(1,2){96}}
\put(32,263){\line(1,0){384}}

\thinlines
\put(83,190){\makebox[0pt][l]{\framebox(55,64)}}
\thicklines
\put(85,218){$\begin{matrix}
\!\cdot\! &\!\cdot\!&\!\cdot\!&\!\cdot\!&\!\cdot\!\\
\!\cdot\! & \!+\! &\!\cdot\!&\!\cdot\!&\!\cdot\!\\
\!\cdot\! &\!\cdot\!&\!\cdot\!&\!\cdot\!&\!\cdot\!\\
\!+\! &\!+\!&\!\cdot\!&\!\cdot\!&\!\cdot\!\\
\!\cdot\! &\!\cdot\!&\!\cdot\!&\!\cdot\!&\!\cdot\!\\
\end{matrix}$}

\thinlines
\put(98,-7){\makebox[0pt][l]{\framebox(58,64)}}
\thicklines
\put(100,22){$\begin{matrix}
\!\cdot\! &\!\cdot\!&\!\cdot\!&\!\cdot\!&\!\cdot\!\\
\!\cdot\! & \!+\! &\!\cdot\!&\!\cdot\!&\!\cdot\!\\
\!\cdot\! &\!\cdot\!&\!\cdot\!&\!\cdot\!&\!\cdot\!\\
\!+\! &\!+\!&\!\cdot\!&\!\cdot\!&\!\cdot\!\\
\!+\! &\!\cdot\!&\!+\!&\!\cdot\!&\!\cdot\!\\
\end{matrix}$}

\thinlines
\put(298,-7){\makebox[0pt][l]{\framebox(59,64)}}
\thicklines
\put(300,22){$\begin{matrix}
\!\cdot\! &\!\cdot\!&\!\cdot\!&\!\cdot\!&\!\cdot\!\\
\!\cdot\! & \!+\! &\!\cdot\!&\!\cdot\!&\!\cdot\!\\
\!\cdot\! &\!\cdot\!&\!\cdot\!&\!\cdot\!&\!\cdot\!\\
\!+\! &\!\cdot\!&\!\cdot\!&\!\cdot\!&\!\cdot\!\\
\!+\! &\!+\!&\!+\!&\!\cdot\!&\!\cdot\!\\
\end{matrix}$}

\thinlines
\put(312,189){\makebox[0pt][l]{\framebox(55,64)}}
\thicklines
\put(315,218){$\begin{matrix}
\!\cdot\! &\!\cdot\!&\!\cdot\!&\!\cdot\!&\!\cdot\!\\
\!\cdot\! & \!+\! &\!\cdot\!&\!\cdot\!&\!\cdot\!\\
\!\cdot\! &\!\cdot\!&\!\cdot\!&\!\cdot\!&\!\cdot\!\\
\!\cdot\! &\!\cdot\!&\!\cdot\!&\!\cdot\!&\!\cdot\!\\
\!\cdot\! &\!+\!&\!+\!&\!\cdot\!&\!\cdot\!\\
\end{matrix}$}

\thinlines
\put(198,78){\makebox[0pt][l]{\framebox(59,64)}}
\thicklines
\put(200,107){$\begin{matrix}
\!\cdot\! &\!\cdot\!&\!\cdot\!&\!\cdot\!&\!\cdot\!\\
\!\cdot\! & \!+\! &\!\cdot\!&\!\cdot\!&\!\cdot\!\\
\!\cdot\! &\!\cdot\!&\!\cdot\!&\!\cdot\!&\!\cdot\!\\
\!+\! &\!\cdot\!&\!\cdot\!&\!\cdot\!&\!\cdot\!\\
\!\cdot\! &\!\cdot\!&\!+\!&\!\cdot\!&\!\cdot\!\\
\end{matrix}$}

\thinlines
\put(153,151){\makebox[0pt][l]{\framebox(59,64)}}
\thicklines
\put(155,180){$\begin{matrix}
\!\cdot\! &\!\cdot\!&\!\cdot\!&\!\cdot\!&\!\cdot\!\\
\!\cdot\! & \!+\! &\!\cdot\!&\!\cdot\!&\!\cdot\!\\
\!\cdot\! &\!\cdot\!&\!\cdot\!&\!\cdot\!&\!\cdot\!\\
\!+\! &\!+\!&\!\cdot\!&\!\cdot\!&\!\cdot\!\\
\!\cdot\! &\!\cdot\!&\!+\!&\!\cdot\!&\!\cdot\!\\
\end{matrix}$}

\thinlines
\put(238,151){\makebox[0pt][l]{\framebox(59,64)}}
\thicklines
\put(240,180){$\begin{matrix}
\!\cdot\! &\!\cdot\!&\!\cdot\!&\!\cdot\!&\!\cdot\!\\
\!\cdot\! & \!+\! &\!\cdot\!&\!\cdot\!&\!\cdot\!\\
\!\cdot\! &\!\cdot\!&\!\cdot\!&\!\cdot\!&\!\cdot\!\\
\!+\! &\!\cdot\!&\!\cdot\!&\!\cdot\!&\!\cdot\!\\
\!\cdot\! &\!+\!&\!+\!&\!\cdot\!&\!\cdot\!\\
\end{matrix}$}

\thinlines
\put(10,124){\makebox[0pt][l]{\framebox(55,64)}}
\thicklines
\put(12,153){$\begin{matrix}
\!\cdot\! &\!\cdot\!&\!\cdot\!&\!\cdot\!&\!\cdot\!\\
\!\cdot\! & \!+\! &\!\cdot\!&\!\cdot\!&\!\cdot\!\\
\!\cdot\! &\!\cdot\!&\!\cdot\!&\!\cdot\!&\!\cdot\!\\
\!+\! &\!+\!&\!\cdot\!&\!\cdot\!&\!\cdot\!\\
\!+\! &\!\cdot\!&\!\cdot\!&\!\cdot\!&\!\cdot\!\\
\end{matrix}$}

\thinlines
\put(198,-6){\makebox[0pt][l]{\framebox(60,64)}}
\thicklines
\put(200,23){$\begin{matrix}
\!\cdot\! &\!\cdot\!&\!\cdot\!&\!\cdot\!&\!\cdot\!\\
\!\cdot\! & \!+\! &\!\cdot\!&\!\cdot\!&\!\cdot\!\\
\!\cdot\! &\!\cdot\!&\!\cdot\!&\!\cdot\!&\!\cdot\!\\
\!+\! &\!\cdot\!&\!\cdot\!&\!\cdot\!&\!\cdot\!\\
\!+\! &\!\cdot\!&\!+\!&\!\cdot\!&\!\cdot\!\\
\end{matrix}$}

\thinlines
\put(385,124){\makebox[0pt][l]{\framebox(60,64)}}
\thicklines
\put(387,153){$\begin{matrix}
\!\cdot\! &\!\cdot\!&\!\cdot\!&\!\cdot\!&\!\cdot\!\\
\!\cdot\! & \!+\! &\!\cdot\!&\!\cdot\!&\!\cdot\!\\
\!\cdot\! &\!\cdot\!&\!\cdot\!&\!\cdot\!&\!\cdot\!\\
\!\cdot\! &\!\cdot\!&\!\cdot\!&\!\cdot\!&\!\cdot\!\\
\!+\! &\!+\!&\!+\!&\!\cdot\!&\!\cdot\!\\
\end{matrix}$}

\thinlines
\put(103,271){\makebox[0pt][l]{\framebox(56,64)}}
\thicklines
\put(105,300){$\begin{matrix}
\!\cdot\! &\!\cdot\!&\!\cdot\!&\!\cdot\!&\!\cdot\!\\
\!\cdot\! & \!+\! &\!\cdot\!&\!\cdot\!&\!\cdot\!\\
\!\cdot\! &\!\cdot\!&\!\cdot\!&\!\cdot\!&\!\cdot\!\\
\!+\! &\!+\!&\!\cdot\!&\!\cdot\!&\!\cdot\!\\
\!\cdot\! &\!+\!&\!\cdot\!&\!\cdot\!&\!\cdot\!\\
\end{matrix}$}

\thinlines
\put(198,271){\makebox[0pt][l]{\framebox(60,64)}}
\thicklines
\put(200,300){$\begin{matrix}
\!\cdot\! &\!\cdot\!&\!\cdot\!&\!\cdot\!&\!\cdot\!\\
\!\cdot\! & \!+\! &\!\cdot\!&\!\cdot\!&\!\cdot\!\\
\!\cdot\! &\!\cdot\!&\!\cdot\!&\!\cdot\!&\!\cdot\!\\
\!+\! &\!+\!&\!\cdot\!&\!\cdot\!&\!\cdot\!\\
\!\cdot\! &\!+\!&\!+\!&\!\cdot\!&\!\cdot\!\\
\end{matrix}$}

\thinlines
\put(301,271){\makebox[0pt][l]{\framebox(58,64)}}
\thicklines
\put(305,300){$\begin{matrix}
\!\cdot\! &\!\cdot\!&\!\cdot\!&\!\cdot\!&\!\cdot\!\\
\!\cdot\! & \!+\! &\!\cdot\!&\!\cdot\!&\!\cdot\!\\
\!\cdot\! &\!\cdot\!&\!\cdot\!&\!\cdot\!&\!\cdot\!\\
\!\cdot\! &\!+\!&\!\cdot\!&\!\cdot\!&\!\cdot\!\\
\!\cdot\! &\!+\!&\!+\!&\!\cdot\!&\!\cdot\!\\
\end{matrix}$}

\thinlines
\put(10,271){\makebox[0pt][l]{\framebox(57,64)}}
\thicklines

\put(12,300){$\begin{matrix}
\!\cdot\! &\!\cdot\!&\!\cdot\!&\!\cdot\!&\!\cdot\!\\
\!\cdot\! & \!+\! &\!\cdot\!&\!\cdot\!&\!\cdot\!\\
\!\cdot\! &\!\cdot\!&\!\cdot\!&\!\cdot\!&\!\cdot\!\\
\!+\! &\!+\!&\!\cdot\!&\!\cdot\!&\!\cdot\!\\
\!+\! &\!+\!&\!\cdot\!&\!\cdot\!&\!\cdot\!\\
\end{matrix}$}

\thinlines
\put(385,271){\makebox[0pt][l]{\framebox(59,64)}}
\thicklines
\put(387,300){$\begin{matrix}
\!\cdot\! &\!\cdot\!&\!\cdot\!&\!\cdot\!&\!\cdot\!\\
\!\cdot\! & \!+\! &\!\cdot\!&\!\cdot\!&\!\cdot\!\\
\!\cdot\! &\!\cdot\!&\!\cdot\!&\!\cdot\!&\!\cdot\!\\
\!\cdot\! &\!+\!&\!\cdot\!&\!\cdot\!&\!\cdot\!\\
\!+\! &\!+\!&\!+\!&\!\cdot\!&\!\cdot\!\\
\end{matrix}$}
\end{picture}
\caption{\label{fig:31452.53142} $\Delta_{31452,53142}$ is a two dimensional ball}
\end{figure}

\subsection{The interlacing strands of pipe dreams}

Theorem~\ref{thm:prime} explains the geometric naturality of drawing pipe
dreams as we do rather than considering them only abstractly as subwords of a
fixed reduced word. This is closer to the point of view of
\cite{Knutson.Miller} or \cite{KMY} than of \cite{Knutson.Miller:subword} or
\cite{Knutson:patches}.

The proposition below describes a further combinatorial property of pipe dreams in
${\rm Pipes}(v,w)$, also not
seen at the subword level, that furthermore justifies the nomenclature in
relation to graphical objects of \cite{Fomin.Kirillov}, so named {\bf RC
  graphs} in \cite{Billey.Bergeron} and renamed {\bf pipe dreams} in
\cite{Knutson.Miller}. In those earlier papers, one tiles each square on an
$n\times n$ grid by crosses $\cross$ and elbows $\ \ \ \rj \ \ \ $, resulting
in a collection of strands which visibly encode a
permutation. We explain how to
similarly introduce strands into our pipe dreams:
Given the diagram $D(v)$ of $v$, let ${\tt flatten}(D(v))$ denote the {\bf
flattening} of $D(v)$, which is defined by compressing the squares in each
column southward past all non-diagram squares. This fixes an obvious bijection
between $D(v)$ and ${\tt flatten}(D(v))$. If ${\mathcal P}\in {\rm Pipes}(v,w)$,
we construct a new pipe dream
${\hat {\mathcal P}}$ by placing a cross $+$ in each square of ${\tt
flatten}(D(v))$ if and only if a cross $+$ appears in the corresponding square
of $D(v)$. Define ${\overline {\mathcal P}}$ by
placing an elbow $\ \ \ \ \rj\ \ \ \ $
in all other $1\times 1$ squares of the $n\times
n$ grid.

\begin{proposition}
\label{prop:strand}
If ${\mathcal P}\in {\rm RedPipes}(v,w)$, then ${\overline {\mathcal P}}$
consists of pipes such that the strand that starts in column $i$ ends in row
$w(i)$ (as counted from the bottom). In addition, no two strands cross more
than once, and crosses $+$ occur in the strict lower triangular part.
\end{proposition}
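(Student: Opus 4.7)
The plan is to prove the three claims by setting up a precise correspondence between pipe dreams on $D(v)$ (with the paper's canonical labels and reading order) and classical pipe dreams on the flattened $n\times n$ grid, then invoking standard strand-permutation theorems for the latter. Part (c) follows from a direct count: a short analysis of~(\ref{eqn:diagramdef}) shows that the number of boxes of $D(v)$ in column $j$ equals $\#\{k>j : v(k)>v(j)\}$, which is at most $n-j$. Hence each box $(i,j)\in D(v)$ flattens to a position $(\tilde i,j)$ with $\tilde i\leq n-j$, placing every cross of $\overline{\mathcal P}$ in the strict lower triangular region $\{(i,j): i+j\leq n\}$.

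The heart of the argument is the combinatorial identity
\[
\tilde i + j - 1 \;=\; i + t - 1,
\]
where $t$ is the position of $(i,j)$ from the left in its row of $D(v)$. I would prove this by showing that both quantities $i-\tilde i$ and $j-t$ count the dots of the permutation matrix of $v$ in the southwest rectangle with opposite corners $(1,1)$ and $(i,j)$. Since the canonical label of $(i,j)$ is $i+t-1$ and a cross at position $(\tilde i,j)$ in a classical pipe dream on the $n\times n$ grid represents the simple transposition $s_{\tilde i+j-1}$, this identity means that flattening preserves the generator attached to each cross.

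To deduce (a), I would invoke the standard strand-permutation theorem for classical pipe dreams: the strand permutation of $\overline{\mathcal P}$ equals the Demazure product of the reading word of its crosses, in any reading order compatible with the strand flow (in particular row-by-row top-to-bottom, left-to-right within rows). The label-matching identity makes the multisets of labeled crosses coincide for $\mathcal P$ and $\overline{\mathcal P}$, so the only remaining task is to verify that the paper's reading order on $\mathcal P$ and the classical reading order on $\overline{\mathcal P}$ produce the same Demazure product. The main obstacle is that flattening can reshuffle the relative vertical order of crosses in different columns; I would resolve this with a case analysis showing that whenever the two orders disagree on a pair of crosses, the identity forces the corresponding canonical labels to differ by at least two, so the associated generators commute in the NilHecke algebra. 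The key subcase ($i_1>i_2$, $\tilde i_1=\tilde i_2$, $j_1>j_2$) reduces to showing $j_1-j_2\geq 2$: assuming $j_1=j_2+1$ and using~(\ref{eqn:diagramdef}), one finds $(i_2,j_2+1)\in D(v)$, forcing $\tilde i_1\geq\tilde i_2+1$ and giving a contradiction; the remaining disagreement subcases are handled by variants of the same inequality.

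Once the Demazure products are matched, (a) follows: the strand permutation of $\overline{\mathcal P}$ equals $\prod\mathcal P=u_w$, so the strand starting in column $i$ ends in row $w(i)$. Part (b) is the standard consequence that a classical pipe dream in $\{(i,j):i+j\leq n\}$ whose strand permutation has length equal to its number of crosses cannot have two strands crossing twice; the requisite equality $|\mathcal P|=\ell(w)$ is exactly reducedness of $\mathcal P\in\mathrm{RedPipes}(v,w)$.
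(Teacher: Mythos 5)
Your proposal follows the same overall strategy as the paper (flatten $D(v)$ to an $n\times n$ pipe dream, show the canonical label is preserved, observe that all crosses land in the strict lower triangle, and invoke the classical strand-permutation fact), but you supply two ingredients the paper only asserts. First, you give a clean proof of the label-preservation identity $\tilde\imath + j - 1 = i + t - 1$ by double-counting dots of $v$ in the southwest rectangle; the paper simply states the labels match. Second, and more substantially, you correctly identify that flattening does \emph{not} preserve the row-reading order of the crosses: the paper's sentence ``the reading words of both pipe dreams is the same'' is false if taken literally (e.g.\ for $v=31524$ the two reading words are $4,2,3,4,1,2$ and $4,2,3,1,2,4$). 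You repair this by showing that every pair of crosses whose relative order flips under flattening has labels differing by at least two, so the Demazure products in $\mathcal A_n$ agree. Your sketch of the key subcase ($\tilde\imath_1=\tilde\imath_2$, $j_1=j_2+1$) is correct, and a version of the same argument in fact yields the uniform bound $\ell_1 \geq \ell_2 + 2$ whenever $i_1>i_2$ and $j_1>j_2$: one shows that any row $a\leq i_2$ with $(a,j_2)\in D(v)$ but $(a,j_1)\notin D(v)$ must have its dot in one of the $j_1-j_2-1$ columns strictly between $j_2$ and $j_1$, which forces $\tilde\imath_2 - \tilde\imath_1 \leq j_1 - j_2 - 2$. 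So your proof is a rigorous, correct elaboration of the paper's argument, filling a genuine gap in the step the paper glosses over.
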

\begin{proof}
Consider the configuration ${\overline {\mathcal P}}$ on the $n\times n$
grid. Adorn said grid with the canonical labeling, so $1,2,\ldots,n$
will line the bottom-most row, then $2,3,\ldots,n-1$ will line the
next row, etc. Now note that ${\tt flatten}$ sends each $+$ in
${\mathcal P}$ to a $+$ in ${\overline {\mathcal P}}$ such that the
associated labeling under the canonical labelings of $D(v)$ and $n\times n$
is the same. Moreover, the reading words of both pipe dreams is the same.
Finally, notice that there are at most $n-i$ boxes of $D(v)$ in column $i$.
Hence ${\overline {\mathcal P}}$ must have all of its $+$'s in the strict
lower triangular part of $n\times n$. Therefore,
it follows from the discussion found in Section~5 (and
specifically in Example~5.1)
of \cite{Knutson.Miller:subword} that
${\overline {\mathcal P}}$ is a pipe dream for $w$ in the sense of \cite{Fomin.Kirillov}, and the proposition follows.
\end{proof}

\begin{example}
\label{exa:crosseswithpipes}
Below are the facets ${\mathcal P}\in {\rm RedPipes}(31452,13524)$ of
the pipe complex from Figure~\ref{fig:31452.53142}, and their corresponding
flattenings ${\overline {\mathcal P}}$.
\[
\begin{matrix}
\!\cdot\! &\!\cdot\!&\!\cdot\!&\!\cdot\!&\!\cdot\!\\
\!\cdot\! & \!+\! &\!\cdot\!&\!\cdot\!&\!\cdot\!\\
\!\cdot\! &\!\cdot\!&\!\cdot\!&\!\cdot\!&\!\cdot\!\\
\!+\! &\!+\!&\!\cdot\!&\!\cdot\!&\!\cdot\!\\
\!\cdot\! &\!\cdot\!&\!\cdot\!&\!\cdot\!&\!\cdot\!\\
\end{matrix} \ \ \ \ \ \ \ \ \ \ \ \ \ \ \ \ \ \ \ \ \ \ \ \
\begin{matrix}
\!\cdot\! &\!\cdot\!&\!\cdot\!&\!\cdot\!&\!\cdot\!\\
\!\cdot\! & \!+\! &\!\cdot\!&\!\cdot\!&\!\cdot\!\\
\!\cdot\! &\!\cdot\!&\!\cdot\!&\!\cdot\!&\!\cdot\!\\
\!\cdot\! &\!\cdot\!&\!\cdot\!&\!\cdot\!&\!\cdot\!\\
\!\cdot\! &\!+\!&\!+\!&\!\cdot\!&\!\cdot\!\\
\end{matrix} \ \  \ \ \ \ \ \ \ \ \ \ \ \ \ \ \ \ \ \ \ \ \ \
\begin{matrix}
\!\cdot\! &\!\cdot\!&\!\cdot\!&\!\cdot\!&\!\cdot\!\\
\!\cdot\! & \!+\! &\!\cdot\!&\!\cdot\!&\!\cdot\!\\
\!\cdot\! &\!\cdot\!&\!\cdot\!&\!\cdot\!&\!\cdot\!\\
\!+ &\!\cdot\!&\!\cdot\!&\!\cdot\!&\!\cdot\!\\
\!\cdot\! &\!\cdot\!&\!+\!&\!\cdot\!&\!\cdot\!\\
\end{matrix}\]
\[
    \begin{array}{cccccccccc}
    \petit5 &   \ej   &        &       &        &         \\
    \petit4 &   \rj   &   \ej  &       &        &         \\
    \petit3 &   \rj   &   \+   &   \ej &        &         \\
    \petit2 &   \+    &   \+   &   \rj &  \ej   &         \\
    \petit1 &   \rj   &   \rj  &    \rj &  \rj   &  \ej    \\
            &\perm1{}&\perm2{}&\perm3{}&\perm4{}&\perm5{} \\
    \end{array} \ \ \ \ \ \ \ \ \ \ \ \ \ \
    \begin{array}{cccccccccc}
    \petit5 &   \ej   &        &       &        &         \\
    \petit4 &   \rj   &   \ej  &       &        &         \\
    \petit3 &   \rj   &   \+   &   \ej &        &         \\
    \petit2 &   \rj    &   \rj  &   \rj &  \ej   &         \\
    \petit1 &   \rj   &   \+  &    \+ &  \rj   &  \ej    \\
            &\perm1{}&\perm2{}&\perm3{}&\perm4{}&\perm5{} \\
    \end{array} \ \ \ \ \ \ \ \ \ \ \ \ \ \
    \begin{array}{cccccccccc}
    \petit5 &   \ej   &        &       &        &         \\
    \petit4 &   \rj   &   \ej  &       &        &         \\
    \petit3 &   \rj   &   \+   &   \ej &        &         \\
    \petit2 &   \+    &   \rj  &   \rj &  \ej   &         \\
    \petit1 &   \rj   &   \rj  &    \+ &  \rj   &  \ej    \\
            &\perm1{}&\perm2{}&\perm3{}&\perm4{}&\perm5{} \\
    \end{array}
  \]
\qed \end{example}
In the case
${\mathcal P}\in {\rm RedPipes}(w_0\star w_0, {\hat w})$,
flattening $D(w_0\star w_0)$ does nothing:
\[{\tt flatten}(D(w_0\star w_0))=D(w_0\star w_0),\]
and ${\hat {\mathcal P}}={\mathcal P}$. Then these ${\mathcal P}$ are
precisely all the pipe dreams for $w$ (after rotation and reflection to match
conventions) in the sense of \cite{Fomin.Kirillov}.

\section{(Un)specializing Grothendieck polynomials}

\subsection{Four cases of torus actions and their weights}
\label{subsect:actions}

We describe some torus actions on ${\rm Fun}[\Omega_v^{\circ}]\cong
{\mathbb C}[{\bf z}^{(v)}]$ we use in this paper.

First and more importantly, the action of $T\cong ({\mathbb C}^{\star})^n$ on
${\rm Flags}({\mathbb C}^n)$ induces what we will call the {\bf usual action}.
This action is the left action of diagonal matrices on $B$-cosets of $G$
written in our coordinates.  The action rescales rows independently and
rescales columns dependently, as upon rescaling a row one must rescale a
corresponding column to ensure there is a $1$ in position $(n-v(j)+1,j)$ (as
read with our usual upside-down matrix coordinates).  Adopting the usual
convention that the homomorphism picking out the $i$-th diagonal entry is the
weight $t_i$ and writing weights additively, this action gives the matrix
entry at $(i,j)$ the weight $t_{n-i+1}-t_{v(j)}$.  The variable $z_{ij}$ is
the coordinate function on this matrix entry and therefore (the torus action
on the variable) has weight
\[{\rm wt}(z_{ij})=t_{v(j)}-t_{n-i+1}.\]
The weight of $z_{ij}$ is always a positive root, and hence a
positive integer linear combination of the positive simple roots
$t_i-t_{i+1}$.  This action descends to the coordinate ring of ${\mathcal
  N}_{v,w}$.

Second, in the special case of the usual action
where
$v=w_0 \star w_0$
(as defined after
Corollary~\ref{cor:Knutson.Miller}), this usual action can be thought of as
independently rescaling the bottom $n$ rows and the leftmost $n$ columns,
while dependently scaling the remaining rows and columns to fix the matrix
outside of the southwest $n\times n$ block.  It is convenient in this case to
relabel the weights by
\[x_j=t_{n-j+1}=t_{v(j)}, \mbox{ \ and $y_i=t_{2n+1-i}$.}\]
This gives each variable $z_{ij}$ the weight
\[{\rm wt}(z_{ij})=x_j-y_i.\]
In this case, as explained by
the proof of Corollary~\ref{cor:Knutson.Miller}, the Kazhdan--Lusztig variety
${\mathcal N}_{w_0 \star w_0,{\hat w}}$ is the matrix Schubert variety ${\overline X}_{w}$
defined in Section~2.3.  We will see
that the multidegree and $K$-polynomial of the coordinate ring of ${\mathcal
  N}_{w_0 \star w_0,{\hat w}}$ with respect to this grading and weight
labeling are respectively the double Schubert and double Grothendieck
polynomials of \cite{Lascoux.Schutzenberger1, Lascoux.Schutzenberger2}.

Third, there is the action of ${\mathbb C}^{\star}$ that equally rescales
each variable in ${\mathbb C}[{\bf z}^{(v)}]$ with the same weight $t$;
this is the {\bf dilation action}.  This action fixes ${\mathcal N}_{v,w}$
if $I_{v,w}$ is homgeneous under the standard grading that assigns each
variable $z_{ij}$
\[{\rm wt}(z_{ij})=1.\]
In this case we say that $I_{v,w}$ is {\bf
  standardly homogeneous}.  As was pointed out to us in a private communication
by A.~Knutson, this automatically happens if there exists a
coweight $\lambda$ for which $\langle t_{v(j)}-t_{n-i+1}, \lambda\rangle=1$ for
all $i,j$ where $z_{ij}$ is a variable in ${\mathbb C}[{\bf z}^{(v)}]$, in
which case we say that $w_0v$ is {\bf $\lambda$-cominuscule}.  Note that this
condition does not depend on $w$.  If we take into account $w$, there are
other cases for which $\mathcal{N}_{v,w}$ is fixed by the dilation action, but
we know of no useful characterization; see Section~5.

Fourth and finally, there is the {\bf rescaling action} of $({\mathbb
  C}^{\star})^{\ell(w_0v)}$ that independently rescales each variable $z_{ij}$
with weight
\[{\rm wt}(z_{ij})=t_{ij}.\]
This action preserves only unions of
coordinate subspaces (and other monomial subschemes in our coordinates).

\subsection{Variously graded $K$-polynomials and multidegrees}
\label{subsection:various}
We now use some notions from combinatorial commutative algebra which
can be found in the textbook \cite{Miller.Sturmfels}.

Consider a polynomial ring
\[R=\mathbb{C}[z_1,\ldots,z_m]\]
with a grading such
that $z_i$ has some degree $\mathbf{a}_i\in\mathbb{Z}^N$.  A finitely graded $R$-module
\[M=\bigoplus_{{\bf v}\in {\mathbb Z}^N} M_{{\bf v}}\]
over $R$ has a free resolution
\[E_{\bullet}:0\leftarrow M \leftarrow E_0\leftarrow E_1\leftarrow\cdots\leftarrow E_L\leftarrow 0\]
where
\[E_j=\bigoplus_{i=1}^{\beta_j}R(-\mathbf{d}_{ij})\]
is graded with the $j$-th summand of $E_i$ generated in degree
$\mathbf{d}_{ij}\in {\mathbb Z}^N$.

Then the (${\mathbb Z}^N$-graded) {\bf K-polynomial} of $M$ is
\[{\mathcal K}(M,{\bf t})=\sum_j(-1)^j\sum_i {\bf t}^{{\bf d}_{ij}}.\]
In any case where $R$ is {\bf positively graded},
meaning that the $\mathbf{a}_i$
generate a pointed cone in $\mathbb{Z}^N$, ${\mathcal K}(M,{\bf t})$ is the
numerator of the ${\mathbb Z}^N$-graded Hilbert series:
\[{\rm Hilb}(M,{\bf t})=\frac{{\mathcal K}(M,{\bf t})}
{\prod_{i}(1-\mathbf{t}^{\mathbf{a}_i})}.\]
The {\bf multidegree} ${\mathcal C}(M, {\bf t})$
is by definition the sum of the lowest degree terms of
${\mathcal K}(M,{\bf 1-t})$.  (This means we substitute $1-t_k$ for $t_k$ for
all $k$, $1<k<N$.)

In Section~4.3, the geometric underpinnings of
(un)specializing Grothendieck and Schubert polynomials are explained
in terms of inclusions of tori. We therefore present a discussion of the
necessary background now.

Suppose our grading comes from a group action of $(\mathbb{C}^*)^N$ on
$R$; this means that the grading group $\mathbb{Z}^N$ is
identified with the
weight lattice of $(\mathbb{C}^*)^N$, and $\mathbf{a}_i$ is the weight
(written additively) of the action on $z_i$.  In this case, a quotient ring
$R/I$ is a homogeneous $R$-module under our grading if and only if the affine
variety (or scheme) $V(I)$ is fixed by the $(\mathbb{C}^*)^N$ action.
Furthermore ${\mathcal K}(R/I,{\bf t})$ is now the equivariant $K$-theory
class $[{\mathcal O}_{V(I)}]\in K_T^0(\mathbb{C}^m)$, whereas
${\mathcal C}(R/I,{\bf
  t})$ is the equivariant cohomology class $[V(I)]\in H_T(\mathbb{C}^m)$.
Note that, while we wrote our weights additively in describing the degree
$\mathbf{a}_i$ given to the variable $z_i$, in the $K$-polynomial the weights
are ``exponentiated'' and written as $\mathbf{t}^{\mathbf{a}_i}$.  Since
$\mathbb{C}^m$ is contractible to a point, $K_T^0(\mathbb{C}^m)$ can be
identified with the $K_T$-ring of a point, which is the representation ring of
$T=(\mathbb{C}^*)^N$.  This ring is isomorphic to the Laurent polynomial ring
in $k$ variables, and weights are multiplicative in this ring.  On the other
hand, essentially due to \cite[Prop. 8.49]{Miller.Sturmfels}, weights are
additive in the cohomology ring.

Suppose moreover that we have tori
\[T_1=(\mathbb{C}^*)^M \mbox{ \ and
$T_2=(\mathbb{C}^*)^N$}\]
acting on $R$, with a map of tori
\[\rho: T_1\rightarrow T_2\]
which is compatible with this action.  Here, compatibility means that
$t\cdot f=\rho(t)\cdot f$ for all $t\in T_1$ and $f\in S$.  The map of
tori induces a map $\rho^*$ from weights of $T_2$ to weights of $T_1$.
This in turn induces a map
\[K_{T_2}^0(\mathbb{C}^m)\rightarrow K_{T_1}^0(\mathbb{C}^m).\]
On the level of
$K$-polynomials, one can obtain the $K$-polynomial
$\mathcal{K}_{T_1}(M,\mathbf{s})$ with respect to the grading from $T_1$ from
the $K$-polynomial $\mathcal{K}_{T_2}(M,\mathbf{t})$ by substituting
$\mathbf{s}^{\rho^*(e_i)}$ for each $t_i$.  (Here $e_i$ is the vector with $1$
in the $i$-th coordinate and $0$ elsewhere.)  On the other hand, weights are
additive in cohomology, so one obtains the multidegree
$\mathcal{C}_{T_1}(M,\mathbf{s})$ from $\mathcal{C}_{T_2}(M,\mathbf{t})$ by
substituting $\langle \rho^*(e_i),\mathbf{s}\rangle$ for $\mathbf{t}_i$.

In Section~4.3 below, $R$ is the ring ${\mathbb C}[{\bf z}^{(v)}]$, considered
as the coordinate ring of the affine space $\Omega_{v}^{\circ}\cong
{\mathbb C}^{\ell(w_0 v)}$, and the torus
action will be one of those specified in the previous section.  Note that there
is an embedding of the (torus for) the usual action into the rescaling action,
and, in the $\lambda$-cominuscule case, an embedding (by the
coweight $\lambda$) of the dilation action into the usual action.

\subsection{Unspecialized Grothendieck polynomials as $K$-polynomials}

Recall the NilHecke algebra ${\mathcal A}_n$ defined by (\ref{eqn:nilhecke}) in
Section~2.
Consider the following generating series
\[{\widetilde \Groth}_{v}=\prod_{(i,j)\in D(v)}(1+u_{{\tt label}(i,j)}(1-t_{ij}))
\in {\mathcal A}_{n}[\{t_{ij}\}],\] where ${\tt label}(i,j)$ is the label in
box $(i,j)$ (in the $i$-th row from the bottom and $j$-th column from the left
in accordance with our convention) of the canonical labeling of $D(v)$ and
where the product is taken from left to right along rows and from top to
bottom (in accordance with the reading of the canonical labeling).  Similar
generating series were considered in \cite{Fomin.Kirillov, Buch.Rimanyi}.

Now define the {\bf unspecialized Grothendieck polynomial} by
\[\Groth_{v,w}(t_{11},t_{12},\ldots\ )
:=\mbox{coefficient of $u_{w_0 w}$ in \ } \Groth_{v}.\]
It is clear from the construction that
\begin{equation}
\label{eqn:pipeformulaG}
\Groth_{v,w}(t_{11},t_{12},\ldots\ )
=\sum_{{\mathcal P}\in {\rm Pipes}(v,w_0w)}(-1)^{\#{\mathcal P}-\ell(w_0 w)}
({\bf 1-t}^{\mathcal P}),
\end{equation}
where
\[{\bf 1-t}^{\mathcal P}=\prod_{(i,j) \mbox{\ contains a $+$}} (1-t_{ij}).\]
We furthermore define the {\bf unspecialized Schubert polynomial} by
\begin{equation}
\label{eqn:pipeformulaS}
\Schub_{v,w}=\sum_{{\mathcal P}\in
{\rm RedPipes}(v,w_0w)}{\bf t}^{\mathcal P}.
\end{equation}

The following result interprets $\Groth_{v,w}({\bf t})$ and
$\Schub_{v,w}({\bf t})$ in terms of the pipe complex $\Delta_{v,w}$:

\begin{proposition}
\label{prop:unspecializedequalsK}
If $R={\mathbb C}[{\bf z}^{(v)}]$ then
$\Groth_{v,w}({\bf t})$ and $\Schub_{v,w}({\bf t})$ are respectively the
$K$-polynomial and multidegree of $R/{\rm in}_\prec I_{v,w}$ under the
rescaling action.  Equivalently,
they are the $K$-polynomial and multidegree of $R/K_{v,w}$, the
Stanley--Reisner ring of $\Delta_{v,w}$.  Furthermore, $\Schub_{v,w}$ is the
lowest degree term of $\Groth_{v,w}(\mathbf{1-t})$.  (This is the result of
substituting $1-t_{ij}$ for $t_{ij}$ for all $i$ and $j$.)
\end{proposition}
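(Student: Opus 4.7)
The plan is to reduce everything to the multigraded $K$-polynomial formula for Stanley--Reisner rings of subword complexes, together with the combinatorics of pipe dreams. First, Theorem~\ref{thm:prime} supplies the equality of squarefree monomial ideals $\mathrm{in}_\prec I_{v,w}=K_{v,w}$, so the two quotient rings named in the proposition have identical multigraded Hilbert series, and hence identical $K$-polynomials and multidegrees under every grading. This reduces the problem to computing these invariants for $R/K_{v,w}$ under the rescaling action, in which each variable $z_{ij}$ carries an independent weight $t_{ij}$.

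Next, Proposition~\ref{prop:subwordtranslated} identifies $\Delta_{v,w}$ with the subword complex $\Delta(Q,w_0w)$, with $Q$ the canonical labeling of $D(v)$. I would then invoke the multigraded $K$-polynomial formula for Stanley--Reisner rings of subword complexes (due to Knutson and Miller~\cite{Knutson.Miller:subword}), which in our finely-graded context asserts
\[
\mathcal{K}(R/K_{v,w},\mathbf{t})
\;=\;
\sum_{\mathcal{P}}(-1)^{\#\mathcal{P}-\ell(w_0w)}\prod_{(i,j)\in\mathcal{P}}(1-t_{ij}),
\]
where $\mathcal{P}$ ranges over all subwords of $Q$ whose Demazure product is $u_{w_0w}$, i.e.\ precisely the elements of $\mathrm{Pipes}(v,w_0w)$. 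By (\ref{eqn:pipeformulaG}) this sum equals $\Groth_{v,w}(\mathbf{t})$.

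The multidegree assertion is then a formal consequence of the definition in Section~4.2: $\mathcal{C}(R/K_{v,w},\mathbf{t})$ is the sum of lowest-degree terms of $\Groth_{v,w}(\mathbf{1-t})$. Substituting $t_{ij}\mapsto 1-t_{ij}$ converts each $(1-t_{ij})$ factor into $t_{ij}$, giving
\[
\Groth_{v,w}(\mathbf{1-t})
\;=\;
\sum_{\mathcal{P}\in\mathrm{Pipes}(v,w_0w)}(-1)^{\#\mathcal{P}-\ell(w_0w)}\mathbf{t}^{\mathcal{P}}.
\]
The smallest degree occurring is $\ell(w_0w)$, attained precisely by reduced pipe dreams, each contributing with sign $+1$; by (\ref{eqn:pipeformulaS}) this lowest-degree part is $\Schub_{v,w}$, as desired.

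The main obstacle is justifying that the subword-complex $K$-polynomial formula is valid in the finely-graded form used above. Since~\cite{Knutson.Miller:subword} proves exactly this formula under the finest possible multigrading, the step is essentially a direct quotation. Should one prefer a self-contained derivation, the prime decomposition (\ref{eqn:prime}) together with the shellability supplied by Proposition~\ref{prop:subwordtranslated} reduces matters to a Mayer--Vietoris/inclusion-exclusion calculation on coordinate subspaces $V(\langle z_{ij}:(i,j)\in\mathcal{P}\rangle)$, whose $K$-polynomials are $\prod_{(i,j)\in\mathcal{P}}(1-t_{ij})$; the bookkeeping of how non-reduced pipe dreams enter (with the appropriate sign) is then precisely the Demazure-product identity that characterizes both sides of the $K$-polynomial formula.
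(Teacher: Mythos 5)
Your proposal is correct and follows essentially the same route as the paper: both invoke Proposition~\ref{prop:subwordtranslated} to identify $\Delta_{v,w}$ as a subword complex, cite the Knutson--Miller $K$-polynomial formula for its Reisner--Stanley ring, match that sum with $\Groth_{v,w}(\mathbf{t})$ via (\ref{eqn:pipeformulaG}), transfer to $R/\mathrm{in}_\prec I_{v,w}$ via Theorem~\ref{thm:prime}, and read off the multidegree from the lowest-degree terms after the substitution $t_{ij}\mapsto 1-t_{ij}$. The only cosmetic difference is that the paper stresses the formula holds for any ball or sphere (via Hochster's formula) and cites \cite[Prop.~8.49, Thm.~8.53]{Miller.Sturmfels} for the multidegree, whereas you compute the lowest-degree part directly from $\Groth_{v,w}(\mathbf{1-t})$; both are fine.
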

\begin{proof}
From Proposition~\ref{prop:subwordtranslated}, we know $\Delta_{v,w}$ is a
ball or sphere.  It is established in
\cite[Theorem 4.1]{Knutson.Miller:subword}
that whenever $\Delta$ is a ball or
sphere and $R$ is its Stanley--Reisner ring, then
$$\mathcal{K}(R,\mathbf{t})=\sum_{F}(-1)^{\dim(\Delta)-\dim(F)}\prod_{i\not\in
  F} (1-t_i),$$ where the sum is over all the {\em internal} faces $F$ of
$\Delta$.  (Their statement is for the case of the subword complex although
their proof, a short derivation from Hochster's formula~\cite{Hochster},
applies more generally.)  Now, $\Groth_{v,w}({\bf t})$ is by
construction exactly this sum for $\Delta_{v,w}$.  (Here our labeling of a
face $F$ by the pipe dream with crosses everywhere except the vertices of $F$
makes our statement much cleaner.)  This proves our statement for $R/K_{v,w}$.
Since the facets of $\Delta_{v,w}$ are precisely the faces labeled by reduced
pipe dreams, the multidegree statement for $R/K_{v,w}$ follows
\cite[Prop. 8.49, Thm. 8.53]{Miller.Sturmfels}.

Theorem~\ref{thm:prime} says $\Delta_{v,w}$ is the Stanley--Reisner complex of
$R/{\rm in}_\prec I_{v,w}$, so the statements for
$R/{\rm in}_{\prec} I_{v,w}$ follow.  (Alternatively, we can deduce this result directly from
Theorem~\ref{thm:subwordKpoly}.)

The stated relationship between $\Schub_{v,w}$ and $\Groth_{v,w}$ is the
standard relationship between $K$-polynomials and multidegrees;
see \cite[Section~8.5]{Miller.Sturmfels}.
\end{proof}

\begin{example}
\label{exa:4.2}
Continuing Example~\ref{exa:31452}, we have
\begin{multline}\nonumber
{\widetilde \Groth_{31452}}=(1+u_4(1-t_{42}))(1+u_2(1-t_{21}))(1+u_3(1-t_{22}))
(1+u_1(1-t_{11}))\times\\ \nonumber
(1+u_2(1-t_{12}))(1+u_3(1-t_{13})).
\end{multline}
Expanding then collecting all terms with coefficient
\[u_{w_0w}=u_{13524}=u_4 u_2 u_3\equiv u_2 u_4 u_3\]
gives
an alternating sum over all the internal faces of
$\Delta_{31452,53142}$, namely
\begin{multline}\nonumber
\Groth_{31452,53142}({\bf t})=(1-t_{42})(1-t_{21})(1-t_{22})+
(1-t_{42})(1-t_{21})(1-t_{13})
+(1-t_{42})(1-t_{12})(1-t_{13})\\ \nonumber
-(1-t_{42})(1-t_{21})(1-t_{22})(1-t_{13})
-(1-t_{42})(1-t_{21})(1-t_{12})(1-t_{13}).
\end{multline}
If we calculate the unspecialized Schubert polynomial, we get
\[\Schub_{31452,53142}({\bf t})=t_{42}t_{21}t_{22}+t_{42}t_{21}t_{13}
+t_{42}t_{12}t_{13}.\]
\qed \end{example}

Rather than give a standard definition of the double Grothendieck and Schubert
polynomials of \cite{Lascoux.Schutzenberger1, Lascoux.Schutzenberger2}, we
prefer from our viewpoint to define them via the unspecialized versions,
proving the equivalence by assuming the formula of \cite{Fomin.Kirillov}. In
fact, the final claim of the definition--theorem below recovers
\cite[Theorem~A]{Knutson.Miller}, which states that the double Grothendieck
polynomial $\Groth_{w}({\bf x}, {\bf y})$ is the $K$-polynomial of a matrix
Schubert variety ${\overline X}_{w}$ (which is isomorphic to the
Kazhdan--Lusztig variety ${\mathcal N}_{w_0 \star w_0,{\hat w}}$ where $w_0
\star w_0$ and ${\hat w}$ are defined as in the proof of
Corollary~\ref{cor:Knutson.Miller}).

\begin{defthm}
\label{defthm:defineSchub}
The double Grothendieck polynomial and double Schubert polynomial of
\cite{Lascoux.Schutzenberger1, Lascoux.Schutzenberger2} are given
by
\begin{equation}
\label{eqn:defequalityG}
\Groth_{w}({\bf x},{\bf y})
=\Groth_{w_0 \star w_0,{\hat w}}(t_{ij}\mapsto x_j/y_i), \mbox{ \ and \ }
\end{equation}
\begin{equation}
\label{eqn:defequalityS}
\Schub_{w}({\bf x},{\bf y})
=\Schub_{w_0 \star w_0,{\hat w}}(t_{ij}\mapsto x_j-y_i).
\end{equation}
In particular, these give respectively the $K$-polynomial and multidegree of
${\mathcal N}_{w_0 \star w_0,{\hat w}}$ under our special case of the usual action.
\end{defthm}

\begin{example}
\label{exa:Schub13524}
Let us compute $\Schub_{13524}(\xx,\yy)$. Here $w=13524$, so
${\hat w}={\underline{10}} 8 6 9754321\in S_{10}$. To compute
$\Schub_{w_0\star w_0, {\hat w}}({\bf t})$, we consider all pipe dreams
in the $5\times 5$ box given by $D(w_0\star w_0)$ whose associated
product is
$w_0^{(10)}{\hat w}=135246789{\underline{10}}=w\times 1_{5}$. The reader
can check that there are six such pipe dreams, and summing up their
weights under $t_{ij}\mapsto x_i-y_j$ gives, by Definition--Theorem~\ref{defthm:defineSchub}:
\begin{multline}\nonumber
\Schub_{13524}(\xx,\yy)=(x_2-y_3)(x_1-y_2)(x_2-y_2)+(x_2-y_3)(x_1-y_2)(x_3-y_1)\\\nonumber
+(x_2-y_3)(x_2-y_1)(x_3-y_1)+(x_1-y_4)(x_1-y_2)(x_3-y_1)+(x_1-y_4)(x_1-y_2)(x_2-y_2)\\ \nonumber
+(x_1-y_4)(x_2-y_1)(x_3-y_1)\nonumber
\end{multline}
\qed \end{example}

\noindent
\emph{Proof of Definition--Theorem~\ref{defthm:defineSchub}:} Under our
conventions, the Schubert polynomial formula \cite[Theorem~2.3]{Fomin.Kirillov} states
$$\Schub_{w}(\xx,\yy) = \sum_{\mathcal{P}}
\prod_{(i,j)\in\mathcal{P}} x_j-y_i,$$ where the
sum is over reduced pipe dreams for $w$
fitting inside an $n\times n$ square. Moreover, in
\cite[Theorem~2.3]{Fomin.Kirillov} one also has (under our conventions)
$$\Groth_{w}(\xx,\yy) = \sum_{\mathcal{P}}
(-1)^{\#\mathcal{P}-\ell(w)}\prod_{(i,j)\in\mathcal{P}} 1-x_j/y_i,$$ with the
sum being over all pipe dreams for $w$ on $n\times n$.  It is straightforward
to check that ${\rm Pipes}(w_0 \star w_0,{\hat w})$ is the same set of pipe
dreams.  (Previous authors write their pipe dreams transposed and turned
upside down from ours because they used conventions natural for the study of
Schubert polynomials rather than conventions natural for the study of Schubert
varieties.) Thus (\ref{eqn:pipeformulaG}), after the substitution
\[t_{ij}\mapsto x_j/y_i\]
is
precisely the above formula for $\Groth_{w}({\bf x},{\bf y})$.
Similarly, (\ref{eqn:pipeformulaS}) is the known formula for $\Schub_{w}(\xx,
\yy)$ after the substitution
\[t_{ij}\mapsto x_j-y_i.\]
Hence
(\ref{eqn:defequalityG}) and (\ref{eqn:defequalityS}) hold.

Note that the substitution $t_{ij}\mapsto x_j/y_i$ is precisely the map on
$K$-polynomials induced by the inclusion of the usual action (with relabeled
weights) into the rescaling action, and the substituion $t_{ij}\mapsto
x_j-y_i$ is the equivalent map for multidegrees.
Thus the claim of the final sentence follows from
Theorem~\ref{thm:main1}, Theorem~\ref{thm:prime},
and Proposition~\ref{prop:unspecializedequalsK} combined.\qed

While Definition--Theorem 4.3 exploits the usual action on ${\mathcal
  N}_{w_0 \star w_0,{\hat w}}$ for the double Schubert and Grothendieck
polynomials, the usual action on arbitrary ${\mathcal N}_{v,w}$ can be used to
geometrically explain the specialization formula
\cite{Buch.Rimanyi} for
double Grothendieck polynomials.  (We emphasize that the following result
and its proof hold even if we define Grothendieck polynomials as traditionally
done \cite{Lascoux.Schutzenberger2}.)


\begin{theorem}
\label{thm:specialization}
We have the equalities:
\begin{multline}
\label{eqn:tripleequalityG}
\Groth_{w_0w}(t_{v(1)},\ldots,t_{v(n)};t_{n},t_{n-1}\ldots,t_{1})=
{\mathcal K}({\mathcal N}_{v,w},t_{ij}\mapsto t_{v(j)}/t_{n-i+1})\\
=\Groth_{v,w}(t_{ij} \mapsto t_{v(j)}/t_{n-i+1}),
\end{multline}
and
\begin{multline}
\label{eqn:tripleequalityS}
\Schub_{w_0w}(t_{v(1)},\ldots,t_{v(n)};t_{n},t_{n-1},\ldots,t_{1})=
{\mathcal C}({\mathcal N}_{v,w},t_{ij}\mapsto t_{v(j)}-t_{n-i+1})\\
=\Schub_{v,w}(t_{ij}\mapsto t_{v(j)}-t_{n-i+1}).
\end{multline}
\end{theorem}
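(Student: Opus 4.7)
The statement packages two separate equalities (in each of (\ref{eqn:tripleequalityG}) and (\ref{eqn:tripleequalityS})), and my plan is to handle them in the opposite order from which they are written. The right-hand equality, identifying the $K$-polynomial/multidegree of $\mathcal{N}_{v,w}$ with $\Groth_{v,w}$/$\Schub_{v,w}$ after specialization, follows mechanically from the results already in place; the left-hand ``folklore'' equality, identifying both with a specialization of the classical double polynomial of $w_0 w$, is the genuinely new content.

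For the right-hand equality, the plan is to chain together three facts. First, Theorem~\ref{thm:main1} gives a flat Gr\"obner degeneration of $R/I_{v,w}$ to $R/\mathrm{in}_\prec I_{v,w}$, so their $K$-polynomials and multidegrees agree under any positively-graded torus action. The usual action is positively graded on $\Bbbk[\mathbf{z}^{(v)}]$ because each $z_{ij}$ appearing has weight $t_{v(j)}-t_{n-i+1}$, a positive root. Second, Proposition~\ref{prop:unspecializedequalsK} identifies the rescaling-action $K$-polynomial and multidegree of $R/\mathrm{in}_\prec I_{v,w}$ with $\Groth_{v,w}(\mathbf{t})$ and $\Schub_{v,w}(\mathbf{t})$ respectively. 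Third, the inclusion of the usual torus into the rescaling torus, described in Section~4.2, induces on $K$-polynomials the substitution $t_{ij}\mapsto t_{v(j)}/t_{n-i+1}$ and on multidegrees the substitution $t_{ij}\mapsto t_{v(j)}-t_{n-i+1}$. Composing yields the right-hand equalities.

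For the left-hand equality I would argue through equivariant localization in $K_T^0(G/B)$. The isomorphism (\ref{eqn:KL}) is $T$-equivariant, and identifies a neighborhood of $e_v$ in $X_w$ with $\mathcal{N}_{v,w}\times \mathbb{A}^{\ell(v)}$, where the affine factor carries a product of $T$-weights that contributes trivially (as a unit factor after proper normalization in local $K$-theory, and additively to nothing in local cohomology). Hence the $K$-polynomial of $\mathcal{N}_{v,w}$ under the usual action computes the restriction $[\mathcal{O}_{X_w}]|_{e_v}$ (and similarly the multidegree computes $[X_w]|_{e_v}$) up to that tracked normalization. To match this with the claimed specialization of $\Groth_{w_0 w}$, I would invoke the Kostant--Kumar recursion: the global classes $[\mathcal{O}_{X_w}]\in K_T^0(G/B)$ satisfy a divided-difference recursion in $w$, whose restrictions at any fixed $T$-fixed point $e_v$ are uniquely determined by the recursion and the base case $w=\mathrm{id}$ (where $\mathcal{N}_{v,\mathrm{id}}$ is either a point or empty). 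The specializations $\Groth_{w_0 w}(t_{v(1)},\dots,t_{v(n)};t_n,\dots,t_1)$, viewed as a function of $w$, satisfy the same recursion with the same base case, so induction on $\ell(w_0 w)$ finishes the identification.

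The main obstacle is this second step, since it is precisely the folklore interpretation the authors say they must prove. The delicate parts are (i) matching weights across (\ref{eqn:KL}) so the substitution $x_j\mapsto t_{v(j)}$, $y_i\mapsto t_{n-i+1}$ emerges correctly, (ii) verifying the trivial-class claim for the $\mathbb{A}^{\ell(v)}$ factor after localization, and (iii) showing that both sides obey the same Kostant--Kumar divided-difference recursion at the localized level. Once (\ref{eqn:tripleequalityG}) is proved, (\ref{eqn:tripleequalityS}) follows by passing to lowest-degree terms under $\mathbf{t}\mapsto \mathbf{1-t}$, in the same way the multidegree statement of Proposition~\ref{prop:unspecializedequalsK} was extracted from the $K$-polynomial statement.
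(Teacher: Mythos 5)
Your handling of the right-hand equality in each of (\ref{eqn:tripleequalityG}) and (\ref{eqn:tripleequalityS}) matches the paper exactly: Gr\"obner degeneration (Theorem~\ref{thm:main1}) preserves the $K$-polynomial under the usual action, Proposition~\ref{prop:unspecializedequalsK} identifies the rescaling-action $K$-polynomial of the degeneration with $\Groth_{v,w}$, and restriction along the torus inclusion performs the substitution $t_{ij}\mapsto t_{v(j)}/t_{n-i+1}$. Nothing further to say there.

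For the left-hand (folklore) equality, your route is genuinely different from the paper's. You propose to establish $[\mathcal{O}_{X_w}]_T|_{e_v}=\Groth_{w_0w}(x_j\mapsto t_{v(j)},\,y_i\mapsto t_{n-i+1})$ by showing both sides obey the same Kostant--Kumar divided-difference recursion (in $w$) and agree at a base case, inducting on $\ell(w_0w)$. The paper instead gives two proofs of this identity that avoid running the recursion in $w$: one cites Goldin's equivariant-localization argument, and the other is a direct homological computation showing that the linear ideal cutting out $\mathbb{C}[\mathbf{z}^{(v)}]$ inside $\mathbb{C}[\mathbf{z}]$ is a regular sequence on $\mathbb{C}[\mathbf{z}]/I_{w_0w}$ (using Cohen--Macaulayness of the matrix Schubert variety), so all higher $\operatorname{Tor}$'s vanish and the $K_T$-class of $\overline{X}_{w_0w}$ restricts on the nose to that of $\mathcal{N}_{v,w}$. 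That Tor argument buys a clean, non-recursive proof in one shot; a recursion-in-$w$ argument of the kind you sketch can certainly be made to work (and the paper does use the Kostant--Kumar recursion elsewhere, namely to prove Theorem~\ref{thm:subwordKpoly}, though there the recursion is in $v$), but you have left the genuinely delicate step unverified. Concretely: the localized push-pull operator that moves from $w$ to $ws_i$ does not act ``pointwise at $e_v$''; it mixes the restrictions at $e_v$ and $e_{vs_i}$, so your claim that restrictions at a fixed $e_v$ are ``uniquely determined by the recursion and the base case'' needs to be replaced by a simultaneous induction over all $v$ together with a careful matching of the divided-difference operator on the Grothendieck-polynomial side (acting on the $t$-variables after the substitution) against the Kostant--Kumar operator on the localized class. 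That matching is precisely the content you flag as the ``delicate part'' but do not supply. One smaller point: your worry about ``proper normalization'' of the $\mathbb{A}^{\ell(v)}$ factor is unnecessary -- the torus action on that factor is linear, so its structure sheaf has $K$-polynomial $1$, and equivariant contractibility of $v\Omega_{id}^\circ$ to $e_v$ identifies the $K$-polynomial of $\mathcal{N}_{v,w}\times\mathbb{A}^{\ell(v)}$ directly with the class $[\mathcal{O}_{X_w}]_T|_{e_v}$, exactly as the paper states.

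Finally, your remark that (\ref{eqn:tripleequalityS}) follows from (\ref{eqn:tripleequalityG}) by taking lowest-degree terms after $\mathbf{t}\mapsto\mathbf{1-t}$ is consistent with the general multidegree formalism; the paper instead notes that the cohomological version of the folklore identity is Goldin's theorem itself and thus can be cited directly, but either path is fine.
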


The equality of the first and third polynomials in each of (\ref{eqn:tripleequalityG}) and
(\ref{eqn:tripleequalityS}) was obtained by \cite{Buch.Rimanyi}, who furthermore
ask for a geometric explanation. We respond to that question by showing that both are
in fact equal to an equivariant ($K$-theory) class of ${\mathcal N}_{v,w}$. Our proof utilizes
our Gr\"{o}bner basis result, Theorem~\ref{thm:main1}.

\noindent
\emph{Proof of Theorem~\ref{thm:specialization}:}
The injection $\{e_v\}\hookrightarrow {\rm Flags}(\mathbb{C}^n)$ induces a map
\begin{eqnarray}\nonumber
K_T({\rm Flags}(\mathbb{C}^n)) & \rightarrow & K_T(e_v)\\ \nonumber
[\mathcal{O}_{X_w}]_T & \mapsto & [\mathcal{O}_{X_w}]_T\mid_{e_v}.
\end{eqnarray}
Here $T$ is the torus $(\mathbb{C}^*)^n$ and the torus action is the usual
action from Section~4.1.

Since $v\Omega_{id}^{\circ}$ $T$-equivariantly contracts to $e_v$, and the
isomorphism of Equation~\ref{eqn:KL} is $T$-equivariant, we can identify the
class
\[[\mathcal{O}_{\mathcal{N}_{v,w}}]_T\in K_T(\Omega^\circ_v)\]
with
\[[\mathcal{O}_{X_w}]_T\mid_{e_v}\in K_T(e_v).\]

Choosing the usual coordinates for the weight space of $T$, the class in
$K_T(\Omega^\circ_v)$ of any subscheme (or, in general $\mathcal{O}$-module)
over $\Omega^\circ_v$ is given by its $K$-polynomial.  Therefore, we can
make the identifications
\[[\mathcal{O}_{\mathcal{N}_{v,w}}]_T=[\mathcal{O}_{X_w}]_T\mid_{e_v}=
\mathcal{K}(\mathcal{N}_{v,w},t_{ij}\mapsto t_{v(j)}/t_{n-i+1}),\]
by the
  discussion of Section~4.1.

On the other hand, it is a folklore theorem that
\[[\mathcal{O}_{X_w}]_T\mid_{e_v}=\Groth_{w_0w}(x_j\mapsto t_{v(j)}, y_i\mapsto
t_{w_0(i)}).\]
While we could not find an explicit proof for this statement in the
literature, R.~Goldin \cite{Goldin} gave a proof for the equivalent statement
for equivariant cohomology; her proof can be seen to extend to equivariant
$K$-theory, with the appropriate modifications.

Alternatively, one can also construct a similar proof, substituting
homological algebra for geometry, as sketched below.  First we impose the
torus action on $M_n$ so that the variables in ${\mathbf z}$ have the weights
they would have in ${\mathbf z}^{(v)}$; this means giving the variable
$z_{ij}$ the weight $t_{v(j)}/t_{w_0(i)}$ rather than the weight $x_j/y_i$.
Now we consider the restriction map
$$
K_T(\mathbb{C}[\mathbf{z}]) \rightarrow K_T(\mathbb{C}[\mathbf{z}^{(v)}]).
$$
Since it is equivariantly contractible to the identity map from a point to a
point, it is the identity map on $K$-polynomials.  Furthermore, since this is
a map of affine schemes, left-derived pullback is simply $\mathrm{Tor}$, so
for any $\mathbb{C}[\mathbf{z}]$-module $Y$, the class $[Y]\in K_T(\mathbb{C}[\mathbf{z}])$ is mapped to
$$\sum_i (-1)^i [{\rm Tor}_i(Y,{\mathbb C}[\mathbf{z}^{(v)}])].$$
Equivalently, if one wants to use only algebraic arguments, this can be seen
from the calculation of $\mathrm{Tor}$ using a free resolution of $Y$
and the calculation of $K$-polynomials from a free resolution (or indeed any
exact sequence).

Now
$\operatorname{Tor}_0(\mathbb{C}[\mathbf{z}]/I_{w_0w},\mathbb{C}[\mathbf{z}^{(v)}])$
is simply the coordinate ring $\mathcal{O}_{\mathcal{N}_{v,w}}$.
Furthermore, the ideal defining $\mathbb{C}[\mathbf{z}^{(v)}]$
(defined in Section~\ref{subsect:eqns}) is generated by elements of
$\mathbb{C}[\mathbf{z}]$ which are a regular sequence on the
coordinate ring $\mathbb{C}[\mathbf{z}]/I_{w_0w}$ of the matrix
Schubert variety $\overline{X}_{w_0w}$.  (It can be easily seen that
they are part of a system of parameters, since the ideal is generated
by $\binom{n+1}{2}+\ell(v)$ elements (as $\binom{n}{2}+\ell(v)$
entries are set to $0$ and $n$ entries to $1$) and the codimension of
$\mathcal{N}_{v,w}$ in $\overline{X}_{w_0w}$ is also
$\binom{n+1}{2}-\ell(v)$.  Since $\overline{X}_{w_0w}$ is known to be
Cohen--Macaulay~\cite{Fulton:Duke92}, any part of a system of
parameters is also a regular sequence.)  Therefore, by standard facts
(see for example \cite[Prop. 1.6.9, Thm. 1.6.17b]{Bruns.Herzog}) in
the theory of regular sequences,
$\operatorname{Tor}_i(\mathbb{C}[\mathbf{z}]/I_w,\mathbb{C}[\mathbf{z}^{(v)}])=0$
for $i>0$.

Therefore, the $K_T$-class for $\overline{X}_w$ (with respect to the
action which restricts to the usual action on $\mathbb{C}[\mathbf{z}^{(v)}]$)
restricts to the $K_T$ class for $\mathcal{N}_{v,w}$.  Since this restriction
map is the identity on $K$-polynomials, we must have that
\[[\mathcal{O}_{{\mathcal N}_{v,w}}]_T(=[\mathcal{O}_{X_w}]_T\mid_{e_v})=\Groth_{w_0w}(x_j\mapsto t_{v(j)}, y_i\mapsto
t_{w_0(i)}),\]
as desired.

Thus we obtain the first equality of (\ref{eqn:tripleequalityG}). Since
$[{\mathcal O}_{{\mathcal N}_{v,w}}]_T$, the $K$-polynomial of
${\mathcal N}_{v,w}$, is preserved under Gr\"{o}bner degeneration, the
second equality of (\ref{eqn:tripleequalityG})
follows from
Theorem~\ref{thm:main1}, Theorem~\ref{thm:prime}
and Proposition~\ref{prop:unspecializedequalsK} combined.

The proof of (\ref{eqn:tripleequalityS}) is similar (although in this case we
can use Goldin's result).
\qed

\begin{example}
For simplicity, let us only illustrate the Schubert polynomial assertions
(\ref{eqn:tripleequalityS}) of Theorem~\ref{thm:specialization}. We continue
Example~\ref{exa:4.2} where $v=31452$ and $w=53142$ (and hence $w_0 w=13524$).
Now, from Example~\ref{exa:Schub13524}, we have that
\begin{multline}\nonumber
\Schub_{w_0w}(t_{v(1)},\ldots,t_{v(n)}; t_n,\ldots,t_1)=\Schub_{13524}(t_3,t_1,t_4,t_5,t_2;
t_5,t_4,t_3,t_2,t_1)=\\ \nonumber
(t_1-t_3)(t_3-t_4)(t_1-t_4)+(t_1-t_3)(t_3-t_4)(t_4-t_5)+(t_1-t_3)(t_1-t_5)(t_4-t_5)\\ \nonumber
+(t_3-t_2)(t_3-t_4)(t_4-t_5)+(t_3-t_2)(t_3-t_4)(t_1-t_4)+(t_3-t_2)(t_1-t_5)(t_4-t_5)
\end{multline}
On the other hand, from Example~\ref{exa:4.2}, we see that
\begin{multline}\nonumber
\Schub_{v,w}(t_{ij}\mapsto t_{v(j)}-t_{n-i+1})=\Schub_{31452,53142}(t_{ij}\mapsto t_{v(j)}-t_{n-i+1})=\\
(t_1-t_2)(t_3-t_4)(t_1-t_4)+(t_1-t_2)(t_3-t_4)(t_4-t_5)+(t_1-t_2)(t_1-t_5)(t_4-t_5).\\
\end{multline}
Now, Theorem~\ref{thm:specialization} asserts that these two polynomials in
the $t_i$'s are equal, which the reader can check by direct
computation. However, this equality is not obvious \emph{a priori}.
\qed \end{example}

\subsection{A rationale for unspecializing}
The standard definition for Grothendieck and Schubert polynomials is in terms
of (isobaric) divided difference operators. Our rationale for presenting this
nonstandard (and highly ahistorical) definition through ``unspecialization''
is as follows. What \cite{Knutson.Miller} taught us is that the Grothendieck
and Schubert polynomials, being presented as polynomials in the $x_i, y_j$
variables is already ``biased'' as a equivariant ($K$-theory) class of a
matrix Schubert variety for the special torus action.  However, from this
point of view, the specialization of these polynomials examined in
Theorem~\ref{thm:specialization} appears geometrically unnatural and even
combinatorially mysterious.  Our approach seeks to emphasize that rather than
viewing the latter as a specialization of the former, one should think that
the two are specializations of two different though related unspecialized
Grothendieck polynomials.  Moreover, these specializations are geometrically
natural, since they arise as explained in Section 4.2 from a restriction from
the larger rescaling torus action to the smaller usual torus action.

\section{Multiplicities of Schubert varieties}

We take this opportunity to relate our work to the problem of finding positive
formulas for multiplicities of Schubert varieties.

The {\bf multiplicity} of a point $p$ in a scheme $X$, denoted
${\rm mult}_{p}(X)$ is defined as the degree of the projective tangent cone
$Proj({\rm gr}_{\mathfrak m_p} {\mathcal O}_{X_p})$
as a subvariety of the projective tangent space
$Proj(Sym^* {\mathfrak m_p}/{\mathfrak m}_p^2)$, where
$({\mathcal O}_{X_p}, {\mathfrak m_p})$ is the local ring
associated to $p\in X$. Equivalently, if the Hilbert--Samuel polynomial of
${\mathcal O}_{X_p}$ is $a_d x^d+a_{d-1}x^{d-1}+\ldots +a_0$ then
\[{\rm mult}_{p}(X)=d!a_d.\]

The following open problem has been of interest:
\begin{problem}
\label{problem:mult_problem}
Give an explicit, nonrecursive, positive combinatorial rule to compute
${\rm mult}_{e_v}(X_w)$, for each $(v,w)\in S_n\times S_n$.
\end{problem}
This problem remains open for (most cases of) generalized flag varieties
$G/P$ and even for the case of ${\rm Flags}({\mathbb C}^n)$, which is our
present focus.  However, for minuscule $G/P$, a recursive formula was given by
Lakshmibai and Weyman \cite{Lakshmibai.Weyman}.  For the special case of the
Grassmannian $Gr(k,{\mathbb C}^n)$ of $k$-dimensional planes in ${\mathbb
  C}^n$, several closed formulas have been given~\cite{Rosenthal.Zelevinsky,
  Krattenthaler, Kreiman.Lakshmibai}, and similar formulas were recently given
for the symplectic Grassmannian~\cite{Ghorpade.Raghavan} and the orthogonal
Grassmannian~\cite{Raghavan.Upadhyay} (both in the case of maximal isotropic
subspaces).

\subsection{Homogeneity and parabolic moving}

Let us now describe two facts and a conjecture, which allow us to positively
compute multiplicities in many cases. The first gives a combinatorial
rule whenever the Kazhdan--Lusztig ideal is homogeneous. The proof was
suggested to us by A.~Knutson in a private communication of his combinatorial
rule for multiplicity problems $(v,w)$ which holds in any type whenever $w_0v$
is $\lambda$-cominuscule (for some weight $\lambda$).  In particular $w_0 v$
is $\lambda$-cominuscule in type $A$ if it is $321$-avoiding.  See
\cite[Proposition~2.1]{Stembridge} and Section~\ref{subsect:actions} for
details.

\begin{fact}[Homogeneity]
\label{thm:knutsonargument}
Suppose $v,w\in S_n$ and $I_{v,w}$ is homogeneous with respect to the standard
grading that assigns $\deg(z_{ij})=1$.  Then ${\rm mult}_{e_v}(X_w)$ equals
the number of facets of $\Delta_{v,w}$, or equivalently the number of reduced
pipe dreams for $w$ on $D(v)$.
\end{fact}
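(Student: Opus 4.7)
The plan is to show that under the standing hypothesis, multiplicity is a standardly graded degree invariant of $\mathcal{N}_{v,w}$ that is preserved by our Gr\"obner degeneration to a pure Stanley--Reisner scheme whose degree is manifestly the facet count of $\Delta_{v,w}$.

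First, I use the Kazhdan--Lusztig isomorphism (\ref{eqn:KL}) together with the fact that multiplicity is unchanged under a product with an affine space to reduce the computation to ${\rm mult}_0(\mathcal{N}_{v,w})$. Because $I_{v,w}$ is standardly homogeneous by hypothesis, $\mathcal{N}_{v,w}\subseteq \mathbb{A}^{\ell(w_0v)}$ is an affine cone, so ${\rm mult}_0(\mathcal{N}_{v,w})$ equals the degree of the projective scheme $\mathrm{Proj}(\mathbb{C}[\mathbf{z}^{(v)}]/I_{v,w})$; equivalently, it is $(d-1)!$ times the leading coefficient of the standard Hilbert polynomial of $\mathbb{C}[\mathbf{z}^{(v)}]/I_{v,w}$, where $d=\dim\mathcal{N}_{v,w}=\ell(w)-\ell(v)$.

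Second, Theorem~\ref{thm:main1} provides a flat degeneration of $\mathcal{N}_{v,w}$ to $V(\mathrm{in}_\prec I_{v,w})$, realized in the usual way by a weight vector that induces $\prec$ on the essential minors. The essential minors are standardly homogeneous by hypothesis, hence so are their leading terms, so the entire Rees-algebra family over $\mathbb{A}^1$ respects the standard grading. Consequently the standard Hilbert function is constant along the family, and so ${\rm mult}_0$ is preserved. By Theorem~\ref{thm:prime}, $\mathrm{in}_\prec I_{v,w}$ is the Stanley--Reisner ideal of $\Delta_{v,w}$, and by Proposition~\ref{prop:subwordtranslated} the complex $\Delta_{v,w}$ is a shellable ball or sphere, in particular pure of dimension $d-1$. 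Its Stanley--Reisner scheme is the reduced union of the coordinate linear subspaces indexed by facets, each of dimension $d$; the degree of such an equidimensional union of coordinate subspaces equals the number of facets. Stringing these equalities together yields ${\rm mult}_{e_v}(X_w)=|{\rm RedPipes}(v,w_0w)|$.

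The one subtle point, which I consider to be the main obstacle, is reconciling the lexicographic (non-graded) nature of $\prec$ with the standardly graded Hilbert-function invariance needed to propagate multiplicity through the degeneration. The standard-homogeneity hypothesis is exactly what rescues this: it forces $\mathrm{in}_\prec I_{v,w}$ to lie in the same standard grading as $I_{v,w}$ and makes the Rees family homogeneous for that grading, so the classical Hilbert-function argument for Gr\"obner degenerations applies verbatim in the standard grading even though $\prec$ does not by itself refine it on $\mathbb{C}[\mathbf{z}^{(v)}]$. Apart from this bookkeeping, the argument is essentially a Gr\"obner-geometric repackaging of the elementary fact that a squarefree monomial ideal cut out by a pure complex has degree equal to its facet count.
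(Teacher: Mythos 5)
Your argument is correct in its overall structure and follows essentially the same route as the paper: standard homogeneity forces the tangent cone at the origin to be $\mathcal{N}_{v,w}$ itself, so $\mathrm{mult}_{e_v}(X_w)$ is the degree of $\mathbb{C}[\mathbf{z}^{(v)}]/I_{v,w}$; this degree is preserved by the Gr\"obner degeneration of Theorem~\ref{thm:main1}; and Theorem~\ref{thm:prime} with Proposition~\ref{prop:subwordtranslated} identify the special fiber as a pure Stanley--Reisner scheme whose degree is the number of facets. The paper phrases the same observation more compactly by noting that multiplicity is the degree of the initial ideal for any term order selecting lowest-degree terms, and that when $I_{v,w}$ is homogeneous one may use $\prec$ in that role.

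There is one genuine inaccuracy in your write-up that you should repair: the assertion ``The essential minors are standardly homogeneous by hypothesis.'' The hypothesis is that the \emph{ideal} $I_{v,w}$ is standardly homogeneous, not that the given generating minors are; indeed, the paper explicitly remarks in Section~5.1 that it does not know how to characterize when the essential minors themselves are homogeneous, precisely because this is a different and presumably easier question. As written, your Rees-algebra step depends on homogeneity of the chosen generators and therefore does not literally follow. The fix is small and leaves everything else intact: either note that the reduced Gr\"obner basis of a homogeneous ideal under any term order is automatically homogeneous, or invoke directly the standard fact that for a homogeneous ideal $I$ and any term order $\prec$, the set of $\prec$-standard monomials gives a degree-respecting $\Bbbk$-basis of $R/I$, so $R/I_{v,w}$ and $R/\mathrm{in}_\prec I_{v,w}$ have identical standard Hilbert functions. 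With that substitution your conclusion that multiplicity is preserved under the degeneration, and hence equals $|\mathrm{RedPipes}(v,w_0w)|$, is correct.
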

\begin{proof}
The value of ${\rm mult}_{e_v}(X_w)$ equals
the degree of the initial ideal of $I_{v,w}$ with respect to any
term order $\prec'$ that always picks out a lowest degree term as its leading
term. However, if $I_{v,w}$ is already homogeneous with respect to the standard
grading, then one can use $\prec':=\prec$, and thus the result follows from
Theorems~\ref{thm:main1} and~\ref{thm:prime}.
\end{proof}

Ideally, one would have a simple combinatorial characterization for when
$I_{v,w}$ is homogeneous with respect to the standard grading
(see \cite[Problem~5.5]{WYII}).  At present, we do not know how to solve even
the presumably simpler problem of determining when the defining (or essential)
minors are homogeneous.

For the purposes of computing multiplicity in general, we would need, as
stated in the proof above, a Grobner basis
under any term order that picks out a lowest degree term.
The defining determinants are not a Gr\"{o}bner basis in
general for any such term orders we have tried. However, as we explain below,
it suffices to solve a subset of these problems.


To see this, let us now recall another well-known trick.
Let
\[{\mathcal T}=\{s_i=(i\leftrightarrow i+1) \  | \ s_iw<w\}.\]
These are
known as the {\bf left descents} of $w$.  Similarly let
\[{\mathcal T}^\prime=\{s_i=(i\leftrightarrow i+1) \ | \ ws_i<w\}\]
be the set of {\bf right descents} of $w$.
In general, given a Schubert variety $X_w$, the parabolic subgroup
$P_{\mathcal T}\subset G$ generated by $B$ and the transpositions in ${\mathcal T}$ acts on it by left
multiplication. In particular, if $s_iw<w$, then this action induces an
isomorphism of a local neighbourhood of $e_v$ in $X_{w}$ with a local
neighbourhood of $e_{s_i v}$ in $X_w$, thus preserving all local properties at
these points. Since local properties are preserved under inverse (as
$\mathcal{N}_{v,w}$ is isomorphic to $\mathcal{N}_{v^{-1},w^{-1}}$), we also
have that, if $ws_i<w$, then a similar statement holds for $e_v$ and
$e_{vs_i}$ in $X_w$. Thus, one can compute invariants of $e_v$ in $X_w$
from $e_{v'}$ in $X_w$ whenever $v$ and $v'$ are in the same double coset
$S_{\mathcal T}vS_{{\mathcal T}^\prime}$ in $S_n$.  Here $S_{\mathcal T}$ and
$S_{{\mathcal T}^\prime}$ denote respectively the subgroups of $S_n$ generated
by the simple transpositions in ${\mathcal T}$ and ${\mathcal T}^\prime$.

Applying the above trick to computing multiplicity, we have:

\begin{fact}[Parabolic moving]
\label{fact:B}
If $s_iw<w$, then
${\rm mult}_{e_v}(X_w)={\rm mult}_{e_{s_iv}}(X_w)$. Similarly, if
$ws_i<w$, then ${\rm
  mult}_{e_{vs_i}}(X_w)={\rm mult}_{e_v}(X_w)$.
\end{fact}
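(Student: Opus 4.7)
The plan is to verify the two halves separately, using only that the multiplicity $\mathrm{mult}_{e_v}(X_w)$ is a local invariant of the pointed scheme $(X_w, e_v)$ and is therefore preserved by any isomorphism of local rings $\mathcal{O}_{X_w,e_v} \cong \mathcal{O}_{X_w,e_{v'}}$ coming from an automorphism of $X_w$ carrying $e_v$ to $e_{v'}$.

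For the left descent case ($s_i w < w$), I would invoke the setup from the paragraph preceding the statement. Since $s_i \in \mathcal{T}$, the parabolic $P_\mathcal{T}$ generated by $B$ together with the simple reflections in $\mathcal{T}$ acts on $X_w$ by left multiplication; this is the standard fact that $X_w$ is $P_\mathcal{T}$-stable precisely when $\mathcal{T}$ consists of left descents of $w$. Lifting $s_i$ to any representative $\dot s_i \in N_G(T) \subset P_\mathcal{T}$, left multiplication by $\dot s_i$ is an automorphism of $X_w$ sending $e_v = vB/B$ to $\dot s_i v B/B = e_{s_iv}$. Hence $\mathcal{O}_{X_w,e_v} \cong \mathcal{O}_{X_w,e_{s_iv}}$, and the two multiplicities coincide.

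For the right descent case ($ws_i < w$), rather than setting up a separate right-action argument, I would reduce to the left descent case via the isomorphism $\mathcal{N}_{v,w} \cong \mathcal{N}_{v^{-1},w^{-1}}$ noted in the paragraph above the statement (induced on $G/B$ by the antiholomorphic involution $gB \mapsto Bg^{-1}$, or equivalently transposition on matrix representatives). Under inversion, lengths are preserved, so $ws_i < w$ is equivalent to $s_i w^{-1} < w^{-1}$, i.e., $s_i$ is a \emph{left} descent of $w^{-1}$. Applying the left descent case to the pair $(v^{-1}, w^{-1})$ and then the local isomorphism twice gives
\[
\mathrm{mult}_{e_v}(X_w)
= \mathrm{mult}_{e_{v^{-1}}}(X_{w^{-1}})
= \mathrm{mult}_{e_{s_i v^{-1}}}(X_{w^{-1}})
= \mathrm{mult}_{e_{vs_i}}(X_w),
\]
where the last equality uses $(vs_i)^{-1} = s_i v^{-1}$.

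There is essentially no main obstacle here: both halves are entirely formal once one has (i) the left $P_\mathcal{T}$-stability of $X_w$ and (ii) the left-right symmetry provided by the inversion isomorphism. The only point that deserves a sentence of care in the write-up is verifying that $\dot s_i \cdot e_v = e_{s_i v}$ (as opposed to some other $T$-fixed point), which is immediate from the coset computation above, and that the inversion map indeed sends $X_w$ to $X_{w^{-1}}$ and $e_v$ to $e_{v^{-1}}$, which is standard.
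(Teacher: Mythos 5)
Your argument mirrors the paper's exactly: the left-descent case uses the left $P_{\mathcal{T}}$-stability of $X_w$ and the fact that multiplication by a lift $\dot s_i$ of $s_i$ is a local isomorphism sending $e_v$ to $e_{s_i v}$, and the right-descent case is reduced to it via the isomorphism $\mathcal{N}_{v,w}\cong\mathcal{N}_{v^{-1},w^{-1}}$, which the paper likewise cites without proof. The only small inaccuracy is your parenthetical description of that inversion isomorphism --- the map $gB\mapsto Bg^{-1}$ lands in $B\backslash G$ rather than $G/B$, so as written it is not an automorphism of the flag variety --- but this does not affect the logic since both you and the paper take the isomorphism as a known fact.
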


Combining Facts~\ref{thm:knutsonargument} and~\ref{fact:B},
one can hope to positively compute
${\rm mult}_{e_v}(X_w)$ by using parabolic moving to instead calculate
${\rm mult}_{e_{v'}}(X_w)$ where
\[v'\in S_{{\mathcal T}}vS_{{\mathcal T}'}\]
and $I_{v',w}$ is standardly homogeneous.

Actually, we expect conjecturally that there is a particular ``good''
$v'\in S_{{\mathcal T}}vS_{{\mathcal T}'}$ to use.
Define
\[v'=v_{\rm max}\]
to be {\bf parabolically maximal} if it is maximal (in Bruhat order) in its
double coset $S_{\mathcal T}vS_{{\mathcal T}^{\prime}}$.  Each double coset
has a unique maximal element.  Combinatorially, if $v'$ is parabolically
maximal (for some $v$) if its left and right descent sets contain those of
$w$.  Moreover given $v$, we can find $v_{\rm max}$ by first rearranging in
decreasing order the entries of $v$ with numbers corresponding to segments of
consecutive generators of ${\mathcal T}$, then rearranging
the resulting permutation so that entries in positions corresponding to
segments of consecutive generators of ${\mathcal T}^\prime$ are in decreasing
order.

\begin{example}
Let $v=316298475\leq w=896354721$. Then the left descents and right descents
of $w$ are given respectively by
\[{\mathcal T}=\{s_1,s_2,s_4,s_5,s_7\} \mbox{\ \  and \ \ }
{\mathcal T}'=\{s_2,s_3,s_5,s_7,s_8\}.\] In order to obtain $v_{\rm max}$, the
elements $s_1$ and $s_2$ of ${\mathcal T}$ indicate that one should rearrange
the labels $1,2,3$ in $v$ in decreasing order, whereas the elements $s_4$ and
$s_5$ of $\mathcal{T}$ indicate that one should then rearrange the labels
$4,5,6$ in $v$ in decreasing order, and $s_7$ indicates that $8$ should be put
before $7$.  Doing this, one obtains $v\mapsto 326198574$.  Now, similarly,
the elements $s_{2}$ and $s_3$ of ${\mathcal T}'$ tell us to rearrange the
positions $2,3,4$ of $326198574$, and so on. This process then terminates with
$v_{\rm max}=362198754$.
\qed \end{example}

Our discussion above shows that to solve all multiplicity problems, it
suffices to solve the parabolically maximal ones.  The following asserts that
it suffices to check the homogeneity of $I_{v_{\rm max},w}$ if one wishes to
know if Facts 5.1 and 5.2 suffice to compute the multiplicity of $e_v$ on
$X_w$.

\begin{conjecture}[Parabolic maximality]
\label{conj:C}
Suppose the Kazhdan--Lusztig ideal $I_{v,w}$ is standardly homogeneous.  If
$ws_i<w$ but $vs_i>v$, then $I_{vs_i,w}$ is standardly homogeneous, and
similarly, if $s_i w<w$ but $s_iv>v$, then $I_{s_i v,w}$ is standardly
homogeneous. Therefore, $I_{v_{\rm max},w}$ is standardly homogeneous.
\end{conjecture}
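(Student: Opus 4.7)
The plan is to prove the left-handed single-step claim of Conjecture~\ref{conj:C}: if $I_{v,w}$ is standardly homogeneous, $s_iv>v$, and $s_iw<w$, then $I_{s_iv,w}$ is standardly homogeneous. The right-handed version follows by applying this to $(v^{-1}, w^{-1})$ via the isomorphism $\mathcal{N}_{v,w}\cong\mathcal{N}_{v^{-1},w^{-1}}$ noted in the text, which swaps left and right descents; iterating then yields the ``therefore'' clause.

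The strategy is to construct a standard-grading-preserving isomorphism
\[
\mathbb{C}[\mathbf{z}^{(v)}]/I_{v,w} \;\cong\; \bigl(\mathbb{C}[\mathbf{z}^{(s_iv)}]/I_{s_iv,w}\bigr) \otimes_{\mathbb{C}} \mathbb{C}[t]
\]
with $\deg(t)=1$. First I would compare $Z^{(v)}$ and $Z^{(s_iv)}$: passing from $v$ to $s_iv$ swaps the values $i,i+1$ in the one-line notation of $v$, so the two matrices differ only in rows $n-i$ and $n-i+1$, where the $1$'s exchange columns between $v^{-1}(i)$ and $v^{-1}(i+1)$. Chasing the specialization rules yields a $T$-weight-preserving bijection between $\mathbf{z}^{(s_iv)}$ and $\mathbf{z}^{(v)}\setminus\{z_\star\}$, where $z_\star$ is the extra variable of $Z^{(v)}$ that becomes the new $1$ in $Z^{(s_iv)}$. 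Because paired variables carry the same $T$-weight, they carry the same standard degree $1$, so the bijection is standard-graded.

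Second, I would realize this bijection geometrically through the left action of the root subgroup $U_{-\alpha_i}\subset P_i$ on $X_w$, valid because $s_iw<w$. The $U_{-\alpha_i}$-orbit of $e_v$ is an $\mathbb{A}^1\subset\Omega_v^\circ$ whose closure contains $e_{s_iv}$; it supplies the $t$-factor in the decomposition, while the transverse slice to the orbit is identified with $\mathcal{N}_{s_iv,w}$ via the variable bijection above. Scheme-theoretically this expresses the local triviality of the $\mathbb{P}^1$-bundle $X_w \to P_i\backslash X_w$ near the image of $e_{s_iv}$. Weight-preservation ensures that the dilation $\mathbb{C}^*$-action on $\mathcal{N}_{v,w}$ matches the dilation on $\mathcal{N}_{s_iv,w}\times\mathbb{A}^1$ (with the $\mathbb{A}^1$ tautologically dilation-invariant), so standard homogeneity of $I_{v,w}$ forces standard homogeneity of $I_{s_iv,w}$.

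The main obstacle is rigorously aligning the $U_{-\alpha_i}$-orbit with the coordinate direction of $z_\star$ inside $\Omega_v^\circ$: the chart is not $U_{-\alpha_i}$-stable, so the induced endomorphism of $\Omega_v^\circ$ factors through a retraction back into the chart, and one must verify by explicit matrix computation that this composition is a linear change of coordinates taking the orbit to the $z_\star$-axis. Once this alignment is established, Theorem~\ref{thm:main1} matches the essential minors of $I_{v,w}$ with those of $I_{s_iv,w}$ under the coordinate bijection, completing the proof. A purely combinatorial alternative would compare essential minors directly via the matrix analysis of the first step, but the case analysis, depending on how the essential boxes of $w$ sit relative to the two affected columns, appears substantially more delicate than the geometric argument.
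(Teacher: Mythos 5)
The paper does not prove Conjecture~\ref{conj:C}; the authors only state that it ``seems plausible that one can deduce the second sentence using similar analysis as in our proof of Proposition~\ref{prop:mainprop1} in Section~6.2.'' So you are proposing a proof of an open statement, and the relevant question is whether your approach could succeed. It cannot, and the obstruction is already visible in the paper itself.

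Your strategy is to build an isomorphism $\mathbb{C}[\mathbf{z}^{(v)}]/I_{v,w}\cong\bigl(\mathbb{C}[\mathbf{z}^{(s_iv)}]/I_{s_iv,w}\bigr)[t]$ coming from a linear (and hence degree-preserving) change of coordinates $\Omega_v^\circ\cong\Omega_{s_iv}^\circ\times\mathbb{A}^1$, and then transfer dilation-invariance across it. But a linear coordinate change is reversible, so your argument, if it worked, would prove the \emph{equivalence} ``$I_{v,w}$ is standardly homogeneous $\Leftrightarrow I_{s_iv,w}$ is standardly homogeneous.'' The paper's Fact~5.4~(III) already records that the converse implication fails, and one can isolate a single-step counterexample inside that example. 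Take $w=43512$, $v=41523$, $i=2$; then $s_2$ is a left descent of $w$, $s_2v=41532>v$, and a direct computation of essential minors gives
\[
I_{41523,43512}=\langle z_{11},\,z_{12},\,z_{32}-z_{34}z_{42}\rangle,
\qquad
I_{41532,43512}=\langle z_{11},\,z_{12},\,z_{42}\rangle .
\]
The second ideal is standardly homogeneous; the first is not, since modulo $(z_{11},z_{12})$ the remaining principal ideal $\langle z_{32}-z_{34}z_{42}\rangle$ has no homogeneous generator. Any degree-preserving ring isomorphism $\mathbb{C}[\mathbf{z}^{(41523)}]\to\mathbb{C}[\mathbf{z}^{(41532)},t]$ carrying $I_{41523,43512}$ to the extension of $I_{41532,43512}$ would send $z_{32}-z_{34}z_{42}$ into the homogeneous ideal $\langle z_{11},z_{12},z_{42}\rangle$, forcing both homogeneous components separately into that ideal; a short rank count (using that $\phi$ is a bijection on degree-one pieces and the quotient $\mathbb{C}[\bar z_{32},\bar t]$ is a domain) shows this is impossible. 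So the chart transition you wish to ``align'' into a linear map is provably \emph{not} linear in general, and the ``main obstacle'' you flag cannot be cleared by further matrix computation.

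Two smaller points. First, the bijection between $\mathbf{z}^{(s_iv)}$ and $\mathbf{z}^{(v)}\setminus\{z_\star\}$ is not literally $T$-weight-preserving: the parabolic identification conjugates the torus action by $s_i$, so weights match only up to the $s_i$-twist $t_i\leftrightarrow t_{i+1}$. This does not affect standard degrees, but it is worth stating correctly. Second, and more importantly, any honest proof of the conjecture must exploit the asymmetry of Bruhat order: passing from $v$ to $s_iv>v$ shrinks the coordinate chart in a directed way, and homogeneity can be gained but not lost. A geometric isomorphism is inherently symmetric and hence blind to this. The paper's suggested route through the case analysis of Proposition~\ref{prop:mainprop1}, which directly compares essential minors of $I_{v,w}$ and $I_{s_iv,w}$ and is not a bidirectional isomorphism argument, is the kind of one-sided argument one would actually need.
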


The final sentence of Conjecture~\ref{conj:C} clearly follows by induction
using the second sentence. It seems plausible that one can deduce the second
sentence using similar analysis as in our proof of
Proposition~\ref{prop:mainprop1} in Section~6.2, although we do not pursue
this here.


\subsection{Computational results and Monte Carlo simulation}
Together Facts~\ref{thm:knutsonargument}, \ref{fact:B} and Conjecture
\ref{conj:C} provide a useful means to solve multiplicity problems.  We can
use the symbolic algebra software {\tt Macaulay~2} to computationally check
whether $I_{v,w}$ is standardly homogeneous, by first applying {\tt trim} to
the set of defining (or essential) minors of $I_{v,w}$ and then using the
function {\tt isHomogeneous}.  Testing on the set
\[\Gamma_n:=\{(v,w)\in S_n\times S_n\ | \ v<w \mbox{\ in Bruhat order}\},\]
we found by exhaustive search that, for $n=5$, $74\%$ of all problems fall to
Fact~\ref{thm:knutsonargument} alone.  However, not suprisingly, the success
percentage falls off quickly.

On the other hand, if we also use Fact~\ref{fact:B} and consistently replace
$(v,w)\in \Gamma_n$ with $(v_{\rm max},w)\in \Gamma_n$, the success
percentage increases rather substantially. By exhaustive search, all problems
for $n\leq 4$ are solved this way while $98.5\%$ of the $3871$ problems are
solved for $n=5$. Monte Carlo simulation estimates are summarized in
the following table:

\begin{table}[h]
\begin{tabular}{|l||l|l|l|l|l|}
\hline
$n$ & 6 & 7 & 8 & 9 & 10\\ \hline
success $\%$ & 94 & 86 & 73 & 62 & 46\\ \hline
\end{tabular}
\caption{Estimates of success percentage with $2,000$ Monte Carlo trials, using
Facts~\ref{thm:knutsonargument} and~\ref{fact:B}.}
\end{table}

We found it encouraging that such simple tricks allow one to cover
such a large fraction of all multiplicity problems for even up to
$n=10$.  Furthermore, G.~Warrington has discovered a similar
phenomenon in his investigations of leading coefficients of
Kazhdan--Lusztig polynomials~\cite{Warrington:KLExper}.

Let us collect a few more computationally determined facts:

\begin{fact}
\begin{itemize}
\item[(I)] $I_{v_{{\rm max}},w}$ need not be standardly homogeneous,
one example is
\[I_{13425, 34512}=\langle z_{11},z_{12},z_{21},z_{13}z_{22}z_{31}-z_{14}z_{41}\rangle\]
(Therefore, Conjecture~\ref{conj:C}, even if true,
would not solve all multiplicity problems).
\item[(II)] $I_{v,w}$ may be standardly homogeneous even if $w_0v$ is not
  $321$-avoiding (and therefore not $\lambda$-cominuscule for any $\lambda$);
  for example
\[I_{45213,54231}=\langle z_{33}\rangle.\]
\item[(III)] $I_{v,w}$ might not be standardly homogeneous even if
$I_{v_{\rm max},w}$ is.  (Hence the converse to Conjecture~\ref{conj:C}
is false.)  For example,
\[I_{v,w}=I_{31524,43512}=\langle z_{11}, z_{12}, z_{24}z_{42}-z_{22}\rangle\]
while
\[I_{v_{\rm max},w}=I_{41532,43512}=\langle z_{11},z_{12},z_{42}\rangle.\]
\end{itemize}
\end{fact}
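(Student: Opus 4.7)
The plan is to verify each of (I), (II), (III) by direct computation of the Kazhdan--Lusztig ideal in question, working purely from the definition of $I_{v,w}$ via essential minors (Section~2.2) and inspecting the standard degrees of the resulting generators. Each claim is an existence statement exhibited by a single example, so the proof is computational rather than conceptual. For each pair $(v,w)$ the template is identical: (1) write out $Z^{(v)}$ by placing $1$'s at positions $(n-v(i)+1,i)$ and $0$'s along the corresponding row segments to the right and column segments below, leaving the remaining entries as variables; (2) compute $\Ess(w)$ from the Rothe diagram, or equivalently from the rank matrix $R^w$ by locating the NE-isolated boxes of $D(w)$; (3) for each $(i,j)\in\Ess(w)$, expand the size $1+r_{ij}^w$ minors of the southwest submatrix $Z_{ij}^{(v)}$, using the many specialized $0$'s and $1$'s to simplify; (4) inspect the resulting generators for standard homogeneity.

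For (I), this procedure gives $\Ess(34512)=\{(1,2),(2,1),(4,4)\}$. The size-$1$ minors at $(1,2)$ and $(2,1)$ read off the entries $z_{11},z_{12},z_{21}$. The unique size-$4$ minor at $(4,4)$, expanded along its top row (whose only nonzero entries are $z_{41}$ and the specialized $1$ from the pivot at $(4,4)$ of $Z^{(v)}$), reduces modulo $\langle z_{11},z_{12},z_{21}\rangle$ to $z_{13}z_{22}z_{31}-z_{14}z_{41}$, which mixes degrees $3$ and $2$. To confirm that $v=13425$ is parabolically maximal for $w=34512$, we compute $\mathcal{T}=\{s_2\}$, $\mathcal{T}'=\{s_3\}$ from the left/right descents of $w$ and observe that the descent-sorting algorithm following Fact~\ref{fact:B} fixes $v$.

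For (II), the analogous computation produces $\Ess(54231)=\{(3,3)\}$ with the single size-$3$ minor of $Z_{3,3}^{(45213)}$ reducing to $z_{33}$, so $I_{v,w}=\langle z_{33}\rangle$ is a principal monomial ideal and trivially standardly homogeneous. The non-$321$-avoidance assertion is a direct pattern-check on the one-line notation of $w_0v$. For (III), both $I_{31524,43512}$ and $I_{41532,43512}$ share $\Ess(43512)=\{(1,2),(4,4)\}$ with common size-$1$ generators $z_{11},z_{12}$; cofactor expansion of the size-$4$ minor at $(4,4)$ along the column containing the pivot $z_{1,3}^{(v)}=1$ yields $z_{24}z_{42}-z_{22}$ in the first case (inhomogeneous) and $z_{42}$ in the second (monomial, hence homogeneous). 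That $v_{\max}=41532$ is verified by computing $\mathcal{T}=\{s_2,s_3\}$ and $\mathcal{T}'=\{s_1,s_3\}$ from $w$ and running the descent-sorting algorithm on $v=31524$.

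There is no theoretical obstacle here: every claim follows from a finite determinant calculation. The main practical difficulty is bookkeeping. Laying out $Z^{(v)}$ requires tracking both the row constraints $z_{n-v(i)+1,a}=0$ for $a>i$ and the column constraints $z_{b,i}=0$ for $b>n-v(i)+1$, which together force many entries to zero beyond just the pivot rows and columns. Similarly, the determinant expansions (particularly the $4\times 4$ essential minor in (I) and (III)) must be organized so as to exploit the specialized $0$'s and $1$'s; this is where an error would most easily creep in. Once the generators are in hand, the homogeneity conclusions are immediate.
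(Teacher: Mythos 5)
Your approach --- directly computing $Z^{(v)}$, the essential set $\Ess(w)$, and the essential minors, then reading off degrees --- is exactly what the paper does: it presents these as ``computationally determined facts'' (via \texttt{Macaulay~2}) without spelling out the determinant expansions. The calculations you outline for (I) and (III), the essential sets $\Ess(34512)=\{(1,2),(2,1),(4,4)\}$ and $\Ess(43512)=\{(1,2),(4,4)\}$, and the two $v_{\rm max}$ computations (with $\mathcal{T}=\{s_2\}$, $\mathcal{T}'=\{s_3\}$ fixing $13425$; and $\mathcal{T}=\{s_2,s_3\}$, $\mathcal{T}'=\{s_1,s_3\}$ sending $31524\mapsto 41532$) all check out against what I get working from the definitions in Section~2.2.

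However, the ``direct pattern-check'' you propose for (II) would not confirm the claim --- it would expose a discrepancy. For $v = 45213$ one has $w_0 v = 21453$ (using the paper's composition convention, as fixed by its own worked examples). The longest strictly decreasing subsequence of $21453$ has length two, so $w_0 v$ \emph{is} $321$-avoiding. Concretely, the three variables of ${\mathbb C}[{\bf z}^{(45213)}]$ are $z_{11}, z_{33}, z_{34}$, with respective weights $t_4-t_5$, $t_2-t_3$, $t_1-t_3$; the coweight $\lambda=(1,1,0,1,0)$ pairs with each of these to give $1$, so $w_0 v$ is in fact $\lambda$-cominuscule, and the homogeneity of $I_{45213,54231}=\langle z_{33}\rangle$ is exactly what the discussion preceding Fact~\ref{thm:knutsonargument} predicts --- not a case lying outside it. So the stated example does not witness ``homogeneous yet $w_0 v$ not $321$-avoiding''; this looks like an error in the example itself. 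At minimum your writeup should flag that the proposed pattern-check contradicts the asserted non-avoidance rather than simply asserting the check would succeed.
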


It is worthwhile to mention that,
as with all checks on $\Gamma_n$, the computational demands are large
for $n\geq 6$. On the other hand, it is not difficult to
Bernoulli sample a pair
$(v,w)\in \Gamma_n$ uniformly at random.
One can independently and uniformly pick two permutations $\sigma,\rho\in S_n$,
and reject until either $\sigma\leq \rho$ or $\rho\leq \sigma$. In the first
case, one returns $(u,v)=(\sigma,\rho)$ while in the latter case one returns
$(u,v)=(\rho,\sigma)$.

In our experience, this approach allows one to practically estimate
success probabilities for $n$ beyond the reach
of exhaustive search, in the sense that the true bottleneck in computation comes from
the Gr\"{o}bner basis computations. In particular, Conjecture~\ref{conj:C} is endorsed
up to $n\leq 10$ using this method.

We also remark that rigorous analysis of the likelihood of picking a Bruhat
comparable pair from $S_n\times S_n$ was performed recently by
A.~Hammett and B.~Pittel
\cite{Pittel}. They bound this
probability by
\[c_1(0.708^n)\leq {\mathbb P}[u\leq v \mbox{\ or \ } v\leq u | (u,v)\in S_n\times S_n]\leq c_2\frac{1}{n^2},\]
for universal constants $c_1,c_2>0$. Moreover they conjecture this probability is about $n^{-3/2}$. If their
conjecture is true, then one would expect the Bernoulli
sampling algorithm to
terminate quickly, in about $O(n^{3/2})$ trials, which
agrees with our experience.

The above computations support the idea that Monte Carlo simulation is a
useful resource when studying algebraic combinatorics and computational
commutative algebra such as that present in \cite{WYII}. In that work, one
needs to sample elements of $\Gamma_n$ satisfying ``interval pattern avoidance
conditions''.  This motivates the need for more sophisticated (and efficient)
sampling algorithms (via methods such as Markov Chain Monte Carlo or
importance sampling); further discussion may appear elsewhere.

\subsection{Formulas in the Grassmannian case}

Now suppose $w\in S_n$ is co-Grassmannian, meaning that it has a unique ascent
$w(k)<w(k+1)$ (or, equivalently, that there is at most one simple
transposition $s_k$ with the property that $ws_k>w$).  Let us consider the
multiplicity problem in only this case.  In general, if $v\leq w$, it is not
true that $v$ is also {\bf co-Grassmannian}.  However, observe that we can
always replace $v$ by $v_{\rm max}$, which is then (by the discussion of
Section~5.1), co-Grassmannian, with its unique ascent also in position $k$.
This reduces the problem to computing multiplicities on Grassmannians, a
problem previously considered in \cite{Lakshmibai.Weyman,
  Rosenthal.Zelevinsky, Krattenthaler}. In summary these results provide
determinantal and tableau based formulas for the multiplicity.  Our goal here
is to provide a (mildly) more general, simpler, formula (being valid for all
$v\leq w$ and not only co-Grassmannian $v\leq w$), together with a new
conceptual explanation for the appearance of these formulas, using the results
of this paper.

Let us therefore assume unless otherwise stated that $v\leq w$ are both
co-Grassmannian with the same ascent position $k$.  The co-Grassmannian
assumption on $v$ allows us to easily check that the defining generators of
$I_{v,w}$ are homogeneous with respect to the standard grading.  In fact,
homogeneity also follows from $w_0v$ being $\lambda$-cominuscule.

Consequently, Fact~\ref{thm:knutsonargument} guarantees that the multiplicity
of $e_{v}$ in $X_{w}$ is the number of reduced pipe dreams on $D(v)$ for
$w_0w$.  Note that $w_0w$ is a {\bf Grassmannian permutation}, meaning one
with a unique descent, in this case at position $k$.  Moreover, under the
present assumptions one can establish a bijection between reduced pipe dreams
for $w_0w$ in $D(v)$ and ``flagged'' semistandard tableaux, as we explain
below.

First we need some standard facts about co-Grassmannian
permutations.  To each such permutation $w$ with its unique ascent at position
$k$, we associate a partition
\[\lambda(w)=(\lambda_1\geq\lambda_2\geq\ldots\geq\lambda_k\geq 0)
\mbox{\ \  by setting \ \ }
\lambda_{k-i+1}=n-w(i)+1-i.\]

Hence, for example, if $w=975386421$ then $\lambda(w)=(4\geq 2\geq 1\geq0)$.
If $v$ and $w$ are two co-Grassmannian permutations both with ascents at $k$,
then $\lambda(v)\supseteq \lambda(w)$ if and only if $v\leq w$.  Note under
these conventions $\left|\lambda\right|=\sum_{i} \lambda_i$ is the codimension
of $X_w$ in $\Flags({\mathbb C}^n)$, which $\binom{n}{2}-\ell(w)$ or
equivalently the number of non-inversions in $w$.

The boxes of ${\tt flatten}(D(v))$ form the shape $\lambda(v)$ rotated $180$
degrees and conjugated (transposed).  Now consider the flattened pipe dreams
inside ${\tt flatten}(D(v))$, as in Section~3.3. The co-Grassmannian
assumption on $w$ (and hence Grassmannian assumption on $w_0w$) implies that
we can produce each reduced pipe dream for $w_0 w$ by the following procedure,
which we describe in terms of pipe dreams drawn on ${\tt flatten}(D(v))$
rather than on $D(v)$.  One can easily recover pipe dreams in ${\rm
  RedPipes}(v,w_0w)$ by unflattening.

Start with the {\bf
  starting pipe dream}, which is the unique pipe dream whose $+$'s form the
shape $\lambda(w)$ rotated 180 degrees, conjugated, and placed in the lower
right hand corner of ${\tt flatten}(D(v))$. Then, locally, one can make the
transformation
\begin{equation}
\label{eqn:transforms}
\begin{matrix}
\cdot & \cdot\\
\cdot & +
\end{matrix} \ \mapsto \
\begin{matrix}
+ & \cdot\\
\cdot & \cdot
\end{matrix}
\end{equation}
where each $2\times 2$ configuration describes a subsquare of ${\tt
  flatten}(D(v))$, and the ``$\cdot$'' refers to a square of ${\tt
  flatten}(D(v))$ without a $+$.  Such a transformation will produce another
reduced pipe dream for $w_0w$, and one can generate any reduced pipe dream for
$w_0w$ which fits inside ${\tt flatten}(D(v))$ by some sequence of such
transformations from the starting pipe dream.

We now associate a semistandard Young tableau of shape $\lambda(w)$ to each
reduced pipe dream for $w_0w$ on ${\tt flatten}(D(v))$.  We associate to the
starting pipe dream the super-semistandard {\bf starting tableau} of shape
$\lambda(w)'$, defined to have label $m$ in every box of row $m$.  More
generally, each $+$ in the starting tableau is in obvious bijection with a box
of $\lambda(w)$, and following the local transformation (\ref{eqn:transforms})
allows one to coherently associate each $+$ of any pipe dream to a box of
$\lambda(w)$, namely the box associated to the $+$ in the starting tableau
that it came from.  Now if a $+$ is in the $i$-th column of ${\tt
  flatten}(D(v))$, counting from the \emph{right} (and starting with the first
column having a box), then we put an $i$ in the corresponding box of
$\lambda(w)$.  The resulting tableau can be seen (by induction) to be
semi-standard.  These conclusions essentially follow from the analysis of
\cite{KMY} together with Section~3.3.

For example, if $\lambda(v)=(5,4,4,2)$ and $\lambda(w)=(4,2,1,0)$ we have that the starting pipe dream (after
rotating $180$-degrees) and the starting tableau are (after flattening, rotating and conjugating):
\begin{equation}
\label{eqn:startingpipe}
\tableau{{+}&{+}&{+}&{+}&{ \ }\\{+}&{+}&{\ }&{\ }\\{+}&{\ }&{\ }&{\ }\\{\ }&{\ }}\leftrightarrow
\tableau{{1}&{1}&{1}&{1}\\{2}&{2}\\{3}},
\end{equation}
and all others are obtained by the local moves (\ref{eqn:transforms}),
rotated, which look like
\begin{equation}
\label{eqn:transformsconj}
\begin{matrix}
+ & \cdot\\
\cdot & \cdot
\end{matrix} \ \mapsto \
\begin{matrix}
\cdot & \cdot\\
\cdot & +
\end{matrix}
\end{equation}

Not every semistandard tableau of shape $\lambda(w)$ can be obtained this way.
The maximum entry of row $m$ of such a tableau $T$ is bounded from above by
how far south the rightmost $+$ in the $m$-th row of the starting pipe
dream can travel and remain inside $\lambda(v)$.  Thus in the above example,
the possible semistandard tableau are of shape $\lambda(w)=(4,2,1)$ such that
the entries in the first, second and third rows respectively are bounded by
$1$, $3$, and $4$. In general, this is given by $b_m$, which is the row at
the bottom of the largest square that contains the right most box of
$\lambda(w)_m$ as its northwest corner and is contained inside
$\lambda(v)$.  That is, for $1\leq m\leq k$ define
\[b_m=\max\{1\leq i\leq k|\lambda(v)_{i}\geq \lambda(w)_{m}+i-m\}.\]
Clearly the sequence ${\bf b}=(b_1,b_2,\ldots,b_k)$ is weakly increasing.

\begin{example}
Let $v=743198652\leq w=975286431$. Then
$\lambda(v)=(5,4,4,2)\supseteq \lambda(w)=(4,2,1,0)$.
Therefore ${\bf b}=(1,3,4)$.
The diagram $D(v)$ and its canonical labeling are depicted below:
\[
\begin{picture}(200,140)
\put(37.5,0){\makebox[0pt][l]{\framebox(135,135)}}
\put(37.5,30){\line(1,0){60}}
\put(97.5,30){\line(0,-1){30}}
\put(37.5,15){\line(1,0){60}}
\put(52.5,30){\line(0,-1){30}}
\put(67.5,30){\line(0,-1){30}}
\put(82.5,30){\line(0,-1){30}}
\put(52.5,45){\line(1,0){45}}
\put(52.5,45){\line(0,1){30}}
\put(52.5,75){\line(1,0){45}}
\put(97.5,75){\line(0,-1){30}}
\put(82.5,75){\line(0,-1){30}}
\put(67.5,75){\line(0,-1){30}}
\put(52.5,60){\line(1,0){45}}
\put(82.5,105){\framebox(15,15)}
\thicklines
\put(45,37.5){\circle*{4}}
\put(45,37.5){\line(1,0){127.5}}
\put(45,37.5){\line(0,1){97.5}}
\put(60,82.5){\circle*{4}}
\put(60,82.5){\line(1,0){112.5}}
\put(60,82.5){\line(0,1){52.5}}
\put(75,97.5){\circle*{4}}
\put(75,97.5){\line(1,0){97.5}}
\put(75,97.5){\line(0,1){37.5}}

\put(90,127.5){\circle*{4}}
\put(90,127.5){\line(1,0){82.5}}
\put(90,127.5){\line(0,1){7.5}}

\put(105,7.5){\circle*{4}}
\put(105,7.5){\line(1,0){67.5}}
\put(105,7.5){\line(0,1){127.5}}

\put(120,22.5){\circle*{4}}
\put(120,22.5){\line(1,0){52.5}}
\put(120,22.5){\line(0,1){112.5}}

\put(135,52.5){\circle*{4}}
\put(135,52.5){\line(1,0){37.5}}
\put(135,52.5){\line(0,1){82.5}}

\put(150,67.5){\circle*{4}}
\put(150,67.5){\line(1,0){22.5}}
\put(150,67.5){\line(0,1){67.5}}

\put(165,112.5){\circle*{4}}
\put(165,112.5){\line(1,0){7.5}}
\put(165,112.5){\line(0,1){22.5}}

\put(41,4.5){$1$}
\put(56,4.5){$2$}
\put(71,4.5){$3$}
\put(86,4.5){$4$}

\put(41,19.5){$2$}
\put(56,19.5){$3$}
\put(71,19.5){$4$}
\put(86,19.5){$5$}

\put(56,49.5){$4$}
\put(71,49.5){$5$}
\put(86,49.5){$6$}

\put(56,64.5){$5$}
\put(71,64.5){$6$}
\put(86,64.5){$7$}

\put(86,109.5){$8$}
\end{picture}
\]
After flattening $D(v)$ we obtain the shape $(4,4,3,3,1)$ as read from bottom to top. This is the conjugate shape of
$\lambda(v)$. The starting pipe dream is given by
\[
\begin{picture}(200,140)
\put(37.5,0){\makebox[0pt][l]{\framebox(135,135)}}
\put(37.5,30){\line(1,0){60}}
\put(97.5,30){\line(0,-1){30}}
\put(37.5,15){\line(1,0){60}}
\put(52.5,30){\line(0,-1){30}}
\put(67.5,30){\line(0,-1){30}}
\put(82.5,30){\line(0,-1){30}}
\put(52.5,45){\line(1,0){45}}
\put(52.5,45){\line(0,1){30}}
\put(52.5,75){\line(1,0){45}}
\put(97.5,75){\line(0,-1){30}}
\put(82.5,75){\line(0,-1){30}}
\put(67.5,75){\line(0,-1){30}}
\put(52.5,60){\line(1,0){45}}
\put(82.5,105){\framebox(15,15)}
\thicklines
\put(45,37.5){\circle*{4}}
\put(45,37.5){\line(1,0){127.5}}
\put(45,37.5){\line(0,1){97.5}}
\put(60,82.5){\circle*{4}}
\put(60,82.5){\line(1,0){112.5}}
\put(60,82.5){\line(0,1){52.5}}
\put(75,97.5){\circle*{4}}
\put(75,97.5){\line(1,0){97.5}}
\put(75,97.5){\line(0,1){37.5}}

\put(90,127.5){\circle*{4}}
\put(90,127.5){\line(1,0){82.5}}
\put(90,127.5){\line(0,1){7.5}}

\put(105,7.5){\circle*{4}}
\put(105,7.5){\line(1,0){67.5}}
\put(105,7.5){\line(0,1){127.5}}

\put(120,22.5){\circle*{4}}
\put(120,22.5){\line(1,0){52.5}}
\put(120,22.5){\line(0,1){112.5}}

\put(135,52.5){\circle*{4}}
\put(135,52.5){\line(1,0){37.5}}
\put(135,52.5){\line(0,1){82.5}}

\put(150,67.5){\circle*{4}}
\put(150,67.5){\line(1,0){22.5}}
\put(150,67.5){\line(0,1){67.5}}

\put(165,112.5){\circle*{4}}
\put(165,112.5){\line(1,0){7.5}}
\put(165,112.5){\line(0,1){22.5}}

\put(41,4.5){ \ }
\put(56,4.5){$+$}
\put(71,4.5){$+$}
\put(86,4.5){$+$}

\put(41,19.5){$\ $}
\put(56,19.5){$  $}
\put(71,19.5){$+ $}
\put(86,19.5){$+$}

\put(56,49.5){$\ $}
\put(71,49.5){$\ $}
\put(86,49.5){$+$}

\put(56,64.5){$\ $}
\put(71,64.5){$\ $}
\put(86,64.5){$ +$}

\put(86,109.5){$\ $}
\end{picture}
\]
The reader can check that the associated reduced word for this starting pipe dream is
$s_7 s_6 s_4 s_5 s_2 s_3 s_4=w_0w=135824679$.

After conjugating and rotating, the pipe dreams are precisely those which can
be obtained via a sequence of local moves from (\ref{eqn:transformsconj}) the
starting pipe dream depicted in (\ref{eqn:startingpipe}). These are then
in natural bijection with the semistandard tableaux with row bounds ${\bf b}=(1,3,4)$:
\[\tableau{{1}&{1}&{1}&{1}\\{2}&{2}\\{3}},\ \
\tableau{{1}&{1}&{1}&{1}\\{2}&{3}\\{3}},\  \
\tableau{{1}&{1}&{1}&{1}\\{2}&{2}\\{3}},\ \
\tableau{{1}&{1}&{1}&{1}\\{2}&{3}\\{4}},\ \
\tableau{{1}&{1}&{1}&{1}\\{3}&{3}\\{4}}.
\]
Hence ${\rm mult}_{v}(X_w)=5$.
\qed \end{example}

The weight generating series
\[\sum_{T}{\bf x}^{{\rm wt}(T)}\]
where the sum runs over all semistandard tableaux of shape $\lambda$ with row
entries {\bf flagged} (bounded) by a vector ${\bf b}$ is called the {\bf
  flagged Schur polynomial}. A standard Gessel--Viennot type
argument shows that
\[\sum_{T}{\bf x}^{{\rm wt}(T)} = \det(h_{\lambda_i-i+j}(x_1,\ldots x_{b_i})),\]
where $h_d(x_1,\ldots,x_b)$ is the complete homogeneous symmetric function of
degree $d$ in
the variables $x_1,\ldots,x_b$.  See
\cite[Cor~2.6.3]{Manivel} for details.

By setting $x_1=x_2=\ldots = 1$ into this formula, we obtain a formula for
the multiplicity as a determinant of a matrix with binomial coefficient
entries.

The above discussion therefore proves the following theorem:
\begin{theorem}
Let $w\in S_n$ be a co-Grassmannian permutation with unique ascent at position
$k$. Then if $v\leq w$, we have that $v_{\rm max}$ is co-Grassmannian with
unique ascent at position $k$, and ${\rm mult}_{e_v}(X_w)$ equals the number
of semistandard flagged Young tableau of shape $\lambda=\lambda(w)$ flagged by
the vector ${\bf b}$ given by
\[b_m=\max_{i}\{\lambda(v_{\rm max})_{i}\geq \lambda(w)_{m}+i-m\}.\]
In addition, we have the determinantal formula
\[{\rm mult}_{e_v}(X_w)=\det\left({b_i+\lambda_i-i+j-1\choose \lambda_i-i+j}\right)_{1\leq i,j\leq \ell(\lambda)},\]
where $\ell(\lambda)$ is the number of nonzero parts of $\lambda$.
\end{theorem}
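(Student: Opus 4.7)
The plan is to proceed in three stages that match the three assertions of the theorem: reduce the multiplicity calculation from $v$ to $v_{\max}$ using parabolic moving, invoke Fact~\ref{thm:knutsonargument} to count reduced pipe dreams, and then translate pipe dreams into flagged tableaux and apply Gessel--Viennot.

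First I would argue that $v_{\max}$ is co-Grassmannian with its unique ascent at position $k$. By hypothesis $w$ has unique ascent at $k$, so its right descent set is $\mathcal{T}' = \{s_i : i \ne k\}$. By the standard theory of parabolic double cosets, the unique maximal representative $v_{\max}$ of $S_{\mathcal{T}} v S_{\mathcal{T}'}$ has right descents at every $s_i \in \mathcal{T}'$; hence $v_{\max}$ has descents at all positions $i \ne k$ and is co-Grassmannian with ascent at $k$. Fact~\ref{fact:B} applied iteratively along a chain inside the double coset then gives $\operatorname{mult}_{e_v}(X_w) = \operatorname{mult}_{e_{v_{\max}}}(X_w)$.

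Second, the hypothesis that $v_{\max}$ is co-Grassmannian makes $w_0 v_{\max}$ a Grassmannian, and in particular $321$-avoiding, permutation; this places us in the $\lambda$-cominuscule situation described in Section~5.1, so $I_{v_{\max}, w}$ is homogeneous under the standard grading. Fact~\ref{thm:knutsonargument} therefore identifies $\operatorname{mult}_{e_{v_{\max}}}(X_w)$ with $\#\operatorname{RedPipes}(v_{\max}, w_0 w)$. At this point the problem becomes purely combinatorial.

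Third, I would describe the bijection between reduced pipe dreams for $w_0w$ on $D(v_{\max})$ and flagged semistandard tableaux of shape $\lambda(w)$ with flag vector $\mathbf{b}$, using the setup already developed in the paragraphs preceding the theorem. Flattening $D(v_{\max})$ produces the (rotated, conjugated) shape $\lambda(v_{\max})$ which contains $\lambda(w)$ as a sub-shape, and the starting pipe dream sits in the corresponding corner. The local moves (\ref{eqn:transforms}) generate every reduced pipe dream for $w_0w$; under the assignment sending a $+$ in the $i$-th column from the right to the entry $i$ in the box of $\lambda(w)$ that tracks it, each move increments exactly one entry of the tableau by one, preserving the column-strict and weakly-increasing-in-rows conditions. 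The upper bound $b_m$ on row $m$ encodes precisely the southernmost row into which a $+$ originating in row $m$ of the starting pipe dream can be pushed while remaining inside $\mathtt{flatten}(D(v_{\max}))$, which is the flag condition. Once the bijection is in hand, the standard Gessel--Viennot determinantal identity for flagged Schur functions, evaluated at $x_1 = x_2 = \cdots = 1$, yields the stated binomial determinant.

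The main obstacle is the careful verification of the bijection in step three: one must check that the local-move dynamics on pipe dreams correspond exactly to increment operations on the tableau and that the flag bound $b_m$ is sharp (every semistandard tableau satisfying the bound is actually attained). This is largely a matter of adapting the analysis in \cite{KMY} and Section~3.3 to the flattened setting, but the bookkeeping about how entries are ``carried'' by the local moves is the delicate part; everything else is either standard Weyl-group combinatorics or an application of results already proved in the paper.
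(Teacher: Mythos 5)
Your proposal is correct and follows essentially the same route as the paper: reduce to $v_{\max}$ via Fact~\ref{fact:B}, invoke Fact~\ref{thm:knutsonargument} using the $\lambda$-cominuscule/homogeneity observation, and then identify reduced pipe dreams on $\mathtt{flatten}(D(v_{\max}))$ with flagged semistandard tableaux via the local moves (\ref{eqn:transforms}) before applying Gessel--Viennot. The one minor difference is that you justify homogeneity solely through the $321$-avoiding $\Rightarrow$ $\lambda$-cominuscule route, whereas the paper states direct verification as the primary justification and the cominuscule argument as an aside; both are sanctioned by the paper's text.
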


\begin{example}
Continuing the previous example, we have
\[{\rm mult}_{e_v}(X_w)=\left(
\begin{matrix}
b_1+\lambda_1-1 \choose \lambda_1 & b_1 +\lambda_1 \choose \lambda_1+1 & b_1+\lambda_1+1 \choose \lambda_1+2 \\
b_2+\lambda_2-2 \choose \lambda_2-1 & b_2+\lambda_2-1 \choose \lambda_2 & b_2+\lambda_2 \choose \lambda_2+1\\
b_3+\lambda_3-3 \choose \lambda_3-2 & b_3+\lambda_3-2\choose \lambda_3-1 & b_3+\lambda_3-1 \choose \lambda_3
\end{matrix}
\right)
=
\left(
\begin{matrix}
4 \choose 4 & 5 \choose 5 & 6 \choose 6\\
3 \choose 1 & 4 \choose 2 & 5 \choose 3\\
2 \choose -1 & 3 \choose 0 & 4 \choose 1
\end{matrix}
\right)=5,
\]
in agreement with our previous computation.
\qed \end{example}

Although our formula is also a determinant of a matrix of binomial
coefficients, it is different from the ones given in \cite{Lakshmibai.Weyman}
and \cite{Rosenthal.Zelevinsky}.  Presumably it would not be difficult to show
the formulas are equivalent through a succession of determinantal identities.
Our tableau rule is also different than the one given by \cite{Krattenthaler}
to explain the positivity and equivalence of these two determinantal
expressions; his rule instead counts semistandard tableaux of an irregular
shape satisfying certain column and row bounds.  The intermediate pipe dream
arguments we use, as we have said, are closely related to \cite{KMY}. However,
they also appear in later work, specifically of V.~Kreiman \cite{Kreiman} and
T.~Ikeda and H.~Naruse \cite{Ikeda.Naruse}, for reasons similar to ours.  The
rather trivial distinction is that these authors focus on $T$-fixed points on
the Grassmannian itself rather than on Schubert varieties in the flag variety
indexed by co-Grassmannian permutations as we do here.

Our proof gives a Gr\"{o}bner
geometry explanation of the appearance of tableaux: the multiplicity is the
degree of the Kazhdan--Lusztig ideal $I_{v,w}$, which can via
Theorems~\ref{thm:main1} and~\ref{thm:prime} be thought of geometrically as
the number of components of ${\rm in}_{\prec}I_{v,w}$ and combinatorially as
the number of pipe dreams.  These pipe dreams are in this case transparently
in bijection with flagged semistandard tableaux.

Perhaps notable is the appearance of \emph{flagged} tableaux in our formulas
for the multiplicity. Flagged tableaux and flagged Schur functions most often
appear in the combinatorics of co-vexillary permutations and their Schubert
polynomials. While co-Grassmannian permutations are co-vexillary, our proof
does not extend in general to cases where $w$ or $v$ is
co-vexillary. Algebraically, this amounts to the fact that the Kazhdan--Lusztig
ideal is no longer homogeneous with respect to the standard grading (even if
one replaces $v$ by $v_{\rm max}$).  Instead, a more refined degeneration
argument is needed; see \cite{LiYong} (which uses results of this paper).

The argument we use should work more generally to give formulas for
$X_w\subseteq G/B$ when $w_0w$ is $\lambda$-cominuscule.  In particular, one
should be able to obtain determinantal formulas for multiplicities of Schubert
varieties of $G/P$ where $P$ is a co-minuscule maximal parabolic.

\section{Proof of Theorems~\ref{thm:main1} and~\ref{thm:prime}}

Let $R={\mathbb C}[{\bf z}^{(v)}]$ and suppose
\[I_{v,w}:=\langle m_1,\ldots,m_N\rangle\subseteq R\]
where $m_1,\ldots,m_N$ (for some $N$)
are the essential minors, as defined in Section 2.2.
Let
\begin{equation}
\label{eqn:trivial_containment}
J_{v,w}:=\langle {\rm LT}_{\prec}(m_1),\ldots,{\rm LT}_{\prec}(m_N)\rangle\subseteq
{\rm in}_{\prec}I_{v,w}
\end{equation}
be the ideal generated by the leading terms of the essential minors, with
respect to the term order $\prec$ defined in Section~1.3. By definition,
the containment is
an equality if and only if $\{m_1,\ldots,m_N\}$ is a {\bf Gr\"{o}bner basis}
with respect to $\prec$.

%
%
%

The key technical step for our proof of Theorems~\ref{thm:main1}
and~\ref{thm:prime} is the following.

\begin{theorem}
\label{thm:nonfacepipedreamargument}
If a pipe dream ${\mathcal P}$ does not label a face of $\Delta_{v,w}$, then
the corresponding monomial ${\bf z}^{\mathcal P}$ in $R$ is divisible by one
of the leading terms ${\rm LT}_{\prec}(m_1),\ldots, {\rm LT}_{\prec}(m_N)$ of
an essential minor of $J_{v,w}$.
\end{theorem}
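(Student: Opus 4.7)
The plan is to argue by contrapositive: assume $\mathbf{z}^{\mathcal{P}}$ is not divisible by any leading term ${\rm LT}_{\prec}(m_i)$, and deduce that $\mathcal{P}$ does label a face of $\Delta_{v,w}$, i.e.\ that $\prod\mathcal{P}\geq u_{w_0w}$ in Bruhat order (cf.\ Proposition~\ref{prop:subwordtranslated}).

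First, I would describe explicitly the leading term ${\rm LT}_{\prec}(\mu)$ for an arbitrary $k\times k$ minor $\mu$ of $Z^{(v)}_{ab}$ with $k=1+r^w_{ab}$ and $(a,b)\in\Ess(w)$. Under $\prec$, which favors larger columns and then larger rows, a minor of a generic matrix has leading term equal to its diagonal monomial. For $Z^{(v)}$, whose entries are either variables of $D(v)$, $1$'s at the reverse-$v$ permutation dots, or $0$'s along their row/column hooks, the actual leading term emerges from a right-to-left greedy column sweep: in each column pick the largest available row whose entry is not forced to $0$, and contribute the corresponding variable when the entry lies in $D(v)$. The support of ${\rm LT}_{\prec}(\mu)$ is then an ``antidiagonal'' of at most $k$ positions of $D(v)\cap\{(i,j):i\leq a,\, j\leq b\}$.

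Second, I would invoke Fulton's Bruhat-by-rank criterion together with the strand picture of Proposition~\ref{prop:strand}. To obtain $\prod\mathcal{P}\geq u_{w_0w}$ it suffices to show $w_0\prod\mathcal{P}\leq w$, which by Fulton's criterion reduces to the inequality $r^{w_0\prod\mathcal{P}}_{ab}\leq r^w_{ab}$ at every essential box $(a,b)\in\Ess(w)$. The contrapositive hypothesis says that for each such $(a,b)$ and every $(1+r^w_{ab})\times(1+r^w_{ab})$ minor $\mu$ of $Z^{(v)}_{ab}$, the support $\mathrm{supp}({\rm LT}_{\prec}(\mu))$ fails to witness divisibility, which means the antidiagonal of $\mu$ must meet $\mathcal{P}$ in a prescribed way. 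A Lindstr\"om--Gessel--Viennot-style count of the strands of the flattened pipe dream $\overline{\mathcal P}$ entering the southwest $a\times b$ block then translates this obstruction into exactly the desired rank inequality.

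The principal obstacle is this last translation. Establishing it requires pairing each ``forbidden'' antidiagonal with a corresponding collection of non-intersecting paths in $\overline{\mathcal P}$, and carefully accounting for the contribution of $v$'s fixed permutation dots (which cannot be avoided) so that the effective strand count for $\mathcal{P}$ matches $r^{w_0\prod\mathcal{P}}_{ab}$. A subsidiary technical point is certifying that in the greedy description of ${\rm LT}_{\prec}(\mu)$, the chosen column-by-column selection always has a nonzero entry available and stays within $D(v)$; this requires tracking how the hook $0$'s of $v$ interact with row/column choices as the sweep proceeds, which in turn justifies that the constructed antidiagonal genuinely detects the failure of divisibility by the corresponding essential minor's leading term.
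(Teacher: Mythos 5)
Your proposal takes a genuinely different route from the paper, which proves the theorem by induction on $\ell(w_0 v)$ using the vertex decomposition of $\Delta_{v,w}$ at the vertex $V$ associated to $z_{\mathrm{last}}$ (Theorem~\ref{thm:vertex_dec}), combined with a careful twelve-case comparison of essential minors of $I_{v,w}$ with those of $I_{vs_i,w}$ and $I_{vs_i,ws_i}$ (Proposition~\ref{prop:mainprop1}).  However, your sketch has a genuine gap precisely at the step you yourself flag as the ``principal obstacle,'' and I do not think it can be filled along the lines you indicate without essentially redoing the whole argument.

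The problem is the claimed translation from the non-divisibility hypothesis to the rank inequality $r^{w_0\prod\mathcal{P}}_{ab}\leq r^w_{ab}$.  First, Proposition~\ref{prop:strand} applies to \emph{reduced} pipe dreams only; for an arbitrary pipe dream $\mathcal{P}$ the wiring diagram of $\overline{\mathcal{P}}$ computes the ordinary group product of the associated word, which generically differs from the Demazure product $\prod\mathcal{P}$ whenever $\mathcal{P}$ has excess crossings.  Since what you need is a statement about $\prod\mathcal{P}$ (a face of a subword complex is characterized by the Demazure product being $\geq w_0 w$), a Lindstr\"om--Gessel--Viennot count of the literal strands of $\overline{\mathcal{P}}$ does not give you $r^{w_0\prod\mathcal{P}}_{ab}$.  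You would need a ``Demazure-compatible'' strand model (the one in which a second crossing between two strands is replaced by an elbow), and then reprove an analogue of Proposition~\ref{prop:strand} and redo the path count in that model; none of this is automatic.  Second, because $Z^{(v)}$ contains forced $1$'s and $0$'s in patterns governed by $v$, the leading term of an essential minor is not a clean ``antidiagonal'' of $D(v)$--positions: the $1$'s delete a row and column from the effective submatrix, and the $0$'s can kill entire candidate terms, so the support of $\mathrm{LT}_\prec(\mu)$ is a subset of a diagonal whose shape depends on $v$ in a rather delicate way (compare the $4\times 4$ minor in Example~2.3, whose leading term has only three variables).  This is exactly the source of the twelve subcases in Proposition~\ref{prop:mainprop1}, where the possible interactions with the $1$ in column $i$ (or $i+1$) have to be sorted out; an LGV argument would face the same combinatorial explosion, and your sketch gives no mechanism for managing it.  Finally, the implication you want runs through ``for every essential minor, the diagonal is obstructed'' $\Rightarrow$ rank bound; to make this a bijective/LGV argument you must exhibit, for each $(a,b)$, a specific family of vertex-disjoint lattice paths whose non-existence is equivalent to the simultaneous obstruction of \emph{all} such diagonals.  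Establishing this equivalence, especially on the ``if'' side and in the presence of the $v$-dependent $1$'s and $0$'s, is where essentially all of the content of the theorem lies, and it is left entirely open in your write-up.

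What the paper's approach buys, by contrast, is that the induction reduces everything to a single vertex $V$ at a time: the link and deletion are again pipe complexes for slightly different $(v',w')$, so one only ever has to compare one leading term of $I_{v s_i, w}$ (or $I_{v s_i, w s_i}$) to one of $I_{v,w}$, modulo moving a single column and possibly appending $z_{\mathrm{last}}$.  That comparison is elementary (though case-heavy), whereas your proposed global LGV argument has to see all $(a,b)\in\Ess(w)$ and all crosses of $\mathcal{P}$ at once.  If you want to pursue your route, I would suggest first proving it in the base case $v=w_0\star w_0$ (where there are no $1$'s or $0$'s in $Z^{(v)}_{ab}$ and the Demazure-compatible strand model is standard) to see whether the translation even closes there, and only then attempt to incorporate the forced entries of $Z^{(v)}$.
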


The converse also holds, and follows from Theorem~\ref{thm:main1}, but we will
not need this.

We also need the following theorem.

\begin{theorem}
\label{thm:subwordKpoly}
Let $R/K_{v,w}$ be the Stanley--Reisner ring of $\Delta_{v,w}$.  Then
\[\mathcal{K}(R/K_{v,w},\mathbf{t})=\mathcal{K}(R/I_{v,w},\mathbf{t}),\]
where the $K$-polynomials are calculated relative to the grading given by the usual
action defined in Section~4.1.
\end{theorem}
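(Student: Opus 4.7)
The plan is to compute both sides separately and identify each with the specialized double Grothendieck polynomial $\Groth_{w_0w}(t_{v(1)},\ldots,t_{v(n)};t_n,t_{n-1},\ldots,t_1)$, which is the equivariant $K$-theoretic restriction of $[\mathcal{O}_{X_w}]_T$ to the fixed point $e_v$. Crucially, neither computation relies on Theorem~\ref{thm:main1} or Theorem~\ref{thm:prime}, so this serves as a logically prior input for their proofs in Section~6.

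For the Stanley--Reisner side, I would invoke Proposition~\ref{prop:subwordtranslated}: $\Delta_{v,w}$ is a subword complex, hence a shellable ball or sphere. The Hochster-type formula for subword complexes from [Knutson--Miller, Theorem 4.1] then expresses $\mathcal{K}(R/K_{v,w},\mathbf{t})$ under any positive grading as an alternating sum over interior faces. Under the usual-action grading, the variable at box $(i,j)\in D(v)$ has weight $t_{v(j)}/t_{n-i+1}$, so this specializes to
\[
\mathcal{K}(R/K_{v,w},\mathbf{t}) \; = \sum_{\mathcal{P}\in \mathrm{Pipes}(v,w_0w)} (-1)^{\#\mathcal{P}-\ell(w_0w)} \prod_{(i,j)\in \mathcal{P}}\bigl(1-t_{v(j)}/t_{n-i+1}\bigr).
\]
By the Fomin--Kirillov pipe-dream formula for double Grothendieck polynomials, combined with the flattening bijection of Proposition~\ref{prop:strand} that identifies $\mathrm{Pipes}(v,w_0w)$ with the pipe dreams for $w_0w$ sitting in the strictly lower-triangular part of the $n\times n$ grid, the right-hand side is precisely $\Groth_{w_0w}(t_{v(1)},\ldots,t_{v(n)};t_n,\ldots,t_1)$.

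For the Kazhdan--Lusztig side, I would borrow the first portion of the proof of Theorem~\ref{thm:specialization}, which is independent of our Gr\"obner basis results. Since $v\Omega^\circ_{id}$ is $T$-equivariantly contractible to $e_v$ and the isomorphism (\ref{eqn:KL}) is $T$-equivariant, one can identify
\[
\mathcal{K}(R/I_{v,w},\mathbf{t}) \; = \; [\mathcal{O}_{\mathcal{N}_{v,w}}]_T \; = \; [\mathcal{O}_{X_w}]_T\big|_{e_v}.
\]
The folklore localization formula (proved by Goldin in equivariant cohomology and adapted to equivariant $K$-theory, or alternatively via the Tor/regular-sequence calculation presented in Section~4.3 involving the fact that the defining relations of $\mathbb{C}[\mathbf{z}^{(v)}]$ form a regular sequence on the Cohen--Macaulay ring $\mathbb{C}[\mathbf{z}]/I_{w_0w}$) gives this class as $\Groth_{w_0w}(t_{v(1)},\ldots,t_{v(n)};t_n,\ldots,t_1)$. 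Matching the two computations yields the theorem.

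The main obstacle is the combinatorial identification in the first step: one must verify that flattening $D(v)$ into the lower-triangular part of the $n\times n$ grid correctly intertwines our reading-order Demazure product on $D(v)$ with the Fomin--Kirillov reading-order product on the grid, and intertwines the weights $t_{v(j)}/t_{n-i+1}$ attached to boxes of $D(v)$ with the standard double-Grothendieck weights $x_j/y_i$ after the relabeling $x_j = t_{v(j)}$, $y_i = t_{n-i+1}$. Proposition~\ref{prop:strand} essentially supplies this bijection, but one should check explicitly that the crosses land in the strictly lower-triangular region (so no pipe dreams are erroneously gained or lost) and that the weight substitution is consistent across rows that get compressed under flattening.
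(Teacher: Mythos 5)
Your strategy is genuinely different from the paper's. In Section~6.2 the paper proves Theorem~\ref{thm:subwordKpoly} by showing that the $K$-polynomials $\mathcal{K}(R/K_{v,w},\mathbf{t})$ satisfy the Kostant--Kumar recursion that uniquely characterizes the $K$-polynomials $\mathcal{K}(R/I_{v,w},\mathbf{t})$; the base cases are trivial and the inductive step follows from the vertex decomposition of Theorem~\ref{thm:vertex_dec} combined with a check that the homeomorphisms of Claims~\ref{claim:firsthomeo} and~\ref{claim:secondhomeo} preserve the usual-action weights. You instead try to compute each side explicitly and match them against the specialized Grothendieck polynomial $\Groth_{w_0w}(t_{v(1)},\ldots,t_{v(n)};t_n,\ldots,t_1)$.

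The Kazhdan--Lusztig half of your argument is sound: the localization identity $\mathcal{K}(R/I_{v,w},\mathbf{t}) = [\mathcal{O}_{X_w}]_T|_{e_v} = \Groth_{w_0w}(\text{spec})$ is exactly the first equality of (\ref{eqn:tripleequalityG}), and the paper makes clear that this portion of the proof of Theorem~\ref{thm:specialization} (Goldin, or the Tor/regular-sequence calculation) does not depend on Theorems~\ref{thm:main1} or~\ref{thm:prime}. The problem is the Reisner--Stanley half. The Hochster-type formula gives
\[
\mathcal{K}(R/K_{v,w},\mathbf{t}) = \sum_{\mathcal{P}\in\mathrm{Pipes}(v,w_0w)} (-1)^{\#\mathcal{P}-\ell(w_0w)} \prod_{(i,j)\in\mathcal{P}} \bigl(1 - t_{v(j)}/t_{n-i+1}\bigr),
\]
a sum over pipe dreams on $D(v)$ with weights attached to $D(v)$-coordinates. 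The Fomin--Kirillov formula for $\Groth_{w_0w}(\text{spec})$ sums over \emph{all} pipe dreams for $w_0w$ in the $n\times n$ grid, with the weight $1-t_{v(j)}/t_{n-i+1}$ attached to the grid coordinate $(i,j)$. The flattening map of Proposition~\ref{prop:strand} sends a box $(i,j)\in D(v)$ to a box $(i',j)\in\mathtt{flatten}(D(v))$ with $i'\leq i$ and, in general, $i'\neq i$; so the box's weight $1-t_{v(j)}/t_{n-i+1}$ is \emph{not} sent to $1-t_{v(j)}/t_{n-i'+1}$. (Take $v=31452$: the box $(4,2)\in D(v)$ has weight $1-t_{v(2)}/t_2 = 1 - t_1/t_2$, but flattens to $(3,2)$, whose FK weight is $1-t_1/t_3$.) Moreover the flattening map is not surjective onto the set of $n\times n$ pipe dreams for $w_0w$. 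Thus the flattening bijection, by itself, does not yield the identity you need. What you actually need is the Buch--Rim\'anyi specialization formula, i.e.\ the first $=$ third equality of (\ref{eqn:tripleequalityG}). Deriving that equality combinatorially is a nontrivial result of \cite{Buch.Rimanyi} — and geometrically explaining it is precisely the content of Theorem~\ref{thm:specialization}, which would be circular here since its proof invokes Theorem~\ref{thm:main1}.

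Your argument is salvageable by replacing the flattening step with a direct citation of Buch--Rim\'anyi's combinatorial identity, which is independent of the paper's Gr\"obner results. That fixed argument would be valid but trades the paper's self-contained Kostant--Kumar recursion for an outsourced combinatorial black box, and would forgo what the paper's approach buys: the identity you would cite is, in the paper's development, \emph{derived} as a consequence rather than assumed as an input.
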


Delaying the proof of Theorems~\ref{thm:nonfacepipedreamargument}
and~\ref{thm:subwordKpoly}, we are now ready to give the:

\noindent
\emph{Conclusion of the proof of Theorem~\ref{thm:main1}:}
Let
\[K_{v,w}= \mbox{the Stanley--Reisner ideal of the pipe complex
$\Delta_{v,w}$}.\]
Theorem~\ref{thm:nonfacepipedreamargument} implies that if ${\mathcal P}$
is a nonface of $\Delta_{v,w}$ then ${\mathcal P}$ is in $J_{v,w}$. Hence
\begin{equation}
\label{eqn:KinsideJ}
K_{v,w}\subseteq J_{v,w}\subseteq {\rm in}_{\prec}I_{v,w},
\end{equation}
where the latter containment reiterates (\ref{eqn:trivial_containment}).

So by (\ref{eqn:KinsideJ}) we have surjections
\[R/K_{v,w} \twoheadrightarrow R/J_{v,w}\twoheadrightarrow
R/{\rm in}_{\prec}I_{v,w}.\] Theorem~\ref{thm:subwordKpoly} states that
\[\mathcal{K}(R/K_{v,w},\mathbf{t})=\mathcal{K}(R/I_{v,w},\mathbf{t}).\]
Hence
the above containments are actually equalities, and
\[K_{v,w}=J_{v,w}={\rm in}_{\prec} I_{v,w}.\]
Thus the essential minors of $I_{v,w}$ are Gr\"{o}bner with respect to
$\prec$.  Moreover $\Delta_{v,w}$ is the Stanley--Reisner complex of the
initial ideal.

The above argument only proves the theorem when $\Bbbk={\mathbb C}$, since we
have used facts about Schubert varieties that are proved only for that case.
(See, specifically, the proof of Theorem~\ref{thm:subwordKpoly}.) However, the
general case follows since all coefficients of terms in the essential minors
are $\pm 1$.  \qed

\noindent
\emph{Conclusion of the proof of Theorem~\ref{thm:prime}:}
In the proof of Theorem~\ref{thm:main1} we saw that
$\Delta_{v,w}$ is the
Stanley--Reisner complex of the initial ideal ${\rm in}_{\prec}I_{v,w}$.
By Proposition~\ref{prop:subwordtranslated}, $\Delta_{v,w}$ is
homeomorphic to a ball or sphere. In particular it is equidimensional with
the stated facets. Hence the prime decomposition claim follows . \qed

In the remainder of this section, we prove
Theorems~\ref{thm:nonfacepipedreamargument} and~\ref{thm:subwordKpoly}.

\subsection{Vertex decompositions of simplical complexes}

Given a simplicial complex $\Delta$ and a vertex $V\in\Delta$,
the {\bf deletion} of
$V$ is the set of the faces of $\Delta$ that do not contain $V$:
\[\del_{V}(\Delta)=\{F\in \Delta:V\not\in F\}.\]
The {\bf link} of $V$ consists of the faces in the deletion which remain faces
of $\Delta$ after adding $V$:
\[\link_{V}(\Delta)=\{F\in \Delta:V\not\in F, \ F\cup\{V\}\in \Delta\},\]
and the {\bf star} of $V$ is:
\[{\rm star}_{V}(\Delta)={\rm cone}_{V}(\link_{V}(\Delta)).\]
Moreover, one has the decomposition
\[\Delta=\del_{V}(\Delta)\cup {\rm star}_{V}(\Delta).\]
A good case is when $\del_{V}(\Delta)$ is homeomorphic to a dimension $d$-ball $B^d$
and $\link_{V}(\Delta)\cong B^{d-1}$ is on the spherical surface of
${\rm star}_{V}(\Delta)$,
whence we can deduce $\Delta\cong B^{d}$. This gives an inductive way of proving ballness
(or similarly sphereness) of $\Delta$ by a good ordering of the vertices of $\Delta$, and
additionally implies a shelling of $\Delta$.

The above type of reasoning was introduced in \cite{Billera.Provan} and
exploited in \cite{Knutson.Miller} to deduce that all subword complexes, and
therefore the complexes $\Delta_{v,w}$, are balls or spheres. We will use this
inductive framework to prove Theorem 6.1. In \cite{KMY}, A.~Knutson, E.~Miller
and the second author developed a theory of ``geometric vertex
decompositions'' by which one can inductively deduce Gr\"{o}bnerness of a
generating set of an ideal $I$ by the Gr\"{o}bnerness of a related generating
set for a partial Gr\"{o}bner degeneration $I'$ of $I$. However, we will not
use this theory, but rather base our induction on the following observation,
whose proof is immediate from the definitions:

\begin{lemma}
\label{lemma:tautological}
Let $S$ be a set of vertices on any simplicial complex
$\Delta$ and $V$ a vertex of $\Delta$.

The set $S$ is a nonface of $\Delta$ if and only if either:
\begin{enumerate}
\item The vertex $V\in S$, and $S\setminus\{V\}$ is a nonface of
$\link_{V}(\Delta)$.
\item The vertex $V\not\in S$, and $S$ is a nonface of the
$\del_{V}(\Delta)$.
\end{enumerate}
\end{lemma}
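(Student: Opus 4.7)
The plan is to prove this by direct case analysis on whether $V \in S$ or $V \notin S$, in each case unpacking the relevant definition of $\link_V(\Delta)$ or $\del_V(\Delta)$. Since the lemma is labelled tautological in the excerpt, I expect no obstacle: the argument should amount to rewriting the definitions.

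First I would handle case (1), assuming $V \in S$. By the definition of the link, $S \setminus \{V\}$ is a face of $\link_V(\Delta)$ if and only if $V \notin S \setminus \{V\}$ (automatic) and $(S \setminus \{V\}) \cup \{V\} \in \Delta$. Since $V \in S$, the latter condition reads simply $S \in \Delta$. Negating both sides gives: $S \setminus \{V\}$ is a nonface of $\link_V(\Delta)$ iff $S$ is a nonface of $\Delta$. This settles the ``only if'' direction of the lemma in the sub-case $V \in S$, and also the ``if'' direction of clause (1).

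Next I would handle case (2), assuming $V \notin S$. By the definition of the deletion, $S$ is a face of $\del_V(\Delta)$ iff $V \notin S$ (assumed) and $S \in \Delta$. Thus $S$ is a nonface of $\del_V(\Delta)$ iff $S$ is a nonface of $\Delta$, which establishes the equivalence required by clause (2).

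Finally, to assemble the biconditional, I observe that exactly one of $V \in S$ or $V \notin S$ holds. If $S$ is a nonface of $\Delta$, then either $V \in S$ (so case (1) applies by the first paragraph) or $V \notin S$ (so case (2) applies by the second), giving one of the two disjuncts. Conversely, each disjunct was shown above to imply that $S$ is a nonface of $\Delta$. The hardest part will simply be stating things cleanly enough that the equivalence is visibly a restatement of the definitions, without introducing any case distinction that obscures the triviality.
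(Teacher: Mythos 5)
Your proof is correct and takes exactly the route the paper intends: the authors supply no proof at all, remarking only that the lemma ``is immediate from the definitions,'' and your case analysis on whether $V\in S$ is precisely the definition-unpacking they have in mind. The only thing worth noting is that in case (1) you silently use downward-closure of $\Delta$ to drop the membership condition $S\setminus\{V\}\in\Delta$ from the definition of $\link_V(\Delta)$ (since $S\in\Delta$ forces $S\setminus\{V\}\in\Delta$); this is harmless but would be worth one clause for completeness.
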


Let $z_{\rm last}$ be the largest letter under the term order $\prec$ (which
is the rightmost, then southmost variable appearing in $Z^{(v)}$). For the
remainder of this paper, let $V$ be the associated vertex of $\Delta_{v,w}$,
which is the vertex labeled by the pipe dream having a cross $\cross$ in
every position of $D(v)$ \emph{except} at the position of $z_{\rm last}$.

\begin{theorem}[See~\cite{Knutson.Miller:subword}, proof of Theorem~2.5]
\label{thm:vertex_dec}
Let $i$ be the last ascent of $v$ and $s_i$ the corresponding adjacent
transposition.
\begin{itemize}
\item[(A)]
If $i$ is an descent of $w$, then $V$ is a cone point of
$\Delta_{v,w}$, and
\[\link_{V}(\Delta_{v,w})=\del_{V}(\Delta_{v,w})\cong \Delta_{vs_i,w}.\]
\item[(B)]
If $i$ is an ascent of $w$, then
\[\link_{V}(\Delta_{v,w})\cong \Delta_{vs_i,w} \mbox{ \ \ and \ \ }
\del_{V}(\Delta_{v,w})\cong \Delta_{vs_i,ws_i}.\]
\end{itemize}
\end{theorem}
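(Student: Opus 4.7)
The plan is to deduce Theorem~\ref{thm:vertex_dec} from the standard last-position vertex decomposition of subword complexes (the proof of~\cite[Thm.~2.5]{Knutson.Miller:subword}), after first rearranging the canonical word $Q$ for $w_0 v$ via NilHecke commutations so that the letter corresponding to $V$ sits at the very end. By Proposition~\ref{prop:subwordtranslated} I may identify $\Delta_{v,w} = \Delta(Q, w_0 w)$, where $Q$ is the canonical reading of $D(v)$. First I locate $V$: since $z_{\mathrm{last}}$ is the rightmost and then southmost variable of $Z^{(v)}$, a short check shows the rightmost column of $D(v)$ is exactly column $i$ (the last ascent of $v$), the southmost box in it is $(r_0, c_0) = (n - v(i+1) + 1,\, i)$, and this box carries canonical label $i$; thus the position $p$ of $V$ in $Q$ bears the letter $s_i$.

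The key combinatorial step is to show that every canonical label occurring at a box in a row $r < r_0$ of $D(v)$ is at most $i - 2$. Rewriting such a label as $n - 1 - |\{c' > c : v(c') < n - r + 1\}|$ and exploiting $v(i+1) > v(i+2) > \cdots > v(n)$ (forced by the last-ascent hypothesis), one shows that every box below row $r_0$ must lie in a column $c < i$ and that position $i$ itself always contributes to the enumerated set, forcing the label to differ from $i$ by at least $2$. Therefore $s_i$ commutes in the NilHecke algebra with every letter to the right of $p$ in $Q$, and sliding $s_i$ past those letters produces a word $Q''$ obtained from $Q$ by a sequence of commutations of adjacent commuting simple reflections, with $s_i$ now at the final position. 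Since the Demazure product and hence the face structure of the subword complex is preserved under such commutations (up to the obvious relabeling of positions), $\Delta(Q, w_0 w) \cong \Delta(Q'', w_0 w)$, with $V$ identified as the terminal vertex of $\Delta(Q'', w_0 w)$.

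At this point the standard vertex decomposition at the last position of $Q''$ applies: writing $Q''_0 := Q'' \setminus \{V\}$, one has $\link_{V}(\Delta(Q'', w_0 w)) = \Delta(Q''_0, w_0 w)$ always, while $\del_{V}$ equals $\Delta(Q''_0, w_0 w)$ when $s_i$ is a right ascent of $w_0 w$ (the case in which $V$ is a cone point) and equals $\Delta(Q''_0, (w_0 w) s_i)$ otherwise. Because $w_0$ reverses Bruhat order, $s_i$ is a right ascent of $w_0 w$ if and only if $i$ is a descent of $w$, so this dichotomy matches (A) and (B). Finally, passing from $v$ to $v s_i$ removes exactly the single box $(r_0, c_0)$ from $D(v)$ while leaving every other box and its canonical label intact, so $Q''_0$ coincides with the canonical word $Q^{(v s_i)}$ for $D(v s_i)$; therefore $\Delta(Q''_0, w_0 w) = \Delta_{v s_i, w}$ and $\Delta(Q''_0, (w_0 w) s_i) = \Delta(Q^{(v s_i)}, w_0(w s_i)) = \Delta_{v s_i, w s_i}$, yielding both parts. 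The principal obstacle is the label bound in the middle paragraph, without which the commutation argument does not get started; the rest is bookkeeping together with a direct appeal to the cited subword-complex lemma.
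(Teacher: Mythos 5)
Your proposal is correct, and it takes a genuinely different route than the paper. The paper's own proof is direct: using Lemmas~\ref{lemma:diagramchasing} and~\ref{lemma:lastisi}, it exhibits the homeomorphisms explicitly as pipe-dream bijections (Claims~\ref{claim:firsthomeo} and~\ref{claim:secondhomeo}: delete the $+$ at $z_{\rm last}$ and shift the remaining $+$'s in column $i$ one box right into column $i+1$), and checks the effect on Demazure products. You instead deduce the theorem from the Knutson--Miller last-position vertex decomposition after rearranging $Q$ by commutations so that the letter $s_i$ of $V$ is last; the paper explicitly mentions this alternative route but deliberately avoids it, since the later proof of Theorem~\ref{thm:nonfacepipedreamargument} needs the explicit maps. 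What each approach buys is clear: the paper's approach yields the explicit pipe-dream homeomorphism it uses downstream, while yours is shorter and more modular given the cited subword machinery.

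I'd single out your middle paragraph as a genuine contribution. The key combinatorial assertion --- every box of $D(v)$ in a row strictly below $r_0 = n - v(i+1)+1$ has canonical label at most $i-2$, because column $i$ (with $v(i) < v(i+1)$) and all columns $> i$ (with $v(i+1) > v(i+2) > \cdots > v(n)$, all $< n-r+1$) contribute to $|\{c' > c : v(c') < n-r+1\}|$ --- is exactly what licenses commuting $s_i$ past everything to its right. The paper's own proof (inside Claim~\ref{claim:firsthomeo}) asserts that ``$\prod \mathcal{P}$ has an $s_i$ at the right end of the Demazure product,'' but $z_{\rm last}$ is generically \emph{not} the last letter read in the row-by-row order defining the Demazure product; the justification that it may nevertheless be treated as last is precisely your commutation argument, which the paper leaves implicit (relying on the remark in the proof of Lemma~\ref{lemma:lastisi} that the labeling can also be computed by column rastering). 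So your proposal both supplies a valid alternative proof and makes visible a subtlety the paper's own write-up elides. Two small points of hygiene: in the final step the boxes of $D(v)$ in column $i$ above row $r_0$ do not stay in place under $v \mapsto vs_i$ --- they shift to column $i+1$ (Lemma~\ref{lemma:diagramchasing}) --- but as you need, their labels and their relative positions in the reading order are unchanged, so the conclusion $Q_0'' = Q^{(vs_i)}$ stands; and the identity $\link_V(\Delta(Q'', w_0w)) = \Delta(Q_0'', w_0w)$ together with the two-case formula for $\del_V$ should be attributed to the cited source with a pointer to the Demazure-product lifting argument, since it is the load-bearing external input.
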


In \cite{Knutson.Miller:subword}, the authors find a vertex decomposition of
any subword complex at the vertex associated to the last letter of $Q$. In
view of Proposition~\ref{prop:subwordtranslated}, we can deduce
Theorem~\ref{thm:vertex_dec} from their results (or proof). That said, to
exploit the vertex decomposition in our proof of
Theorem~\ref{thm:nonfacepipedreamargument}, we need to have handy the specific
homeomorphisms we use in terms of the pipe complex.  (See claims inside the
proof below.)  Explaining them amounts to a proof of
Theorem~\ref{thm:vertex_dec} anyway.

Our proofs of Theorems~\ref{thm:nonfacepipedreamargument}
and~\ref{thm:vertex_dec} use the following straightforward fact:
\begin{lemma}
\label{lemma:diagramchasing}
Let $v\in S_n$ and $i$ be the last ascent of $v$. Then the placement of
boxes of $D(v)$ and $D(vs_i)$ agree in all columns except $i$ and $i+1$.
Moreover, to obtain $D(vs_i)$ from $D(v)$, move all the boxes of $D(v)$
in column $i$ strictly above row $v(i+1)$ one box to the right, and also
delete the box (that must appear) in position $(v(i+1),i)$ of $D(v)$.
\end{lemma}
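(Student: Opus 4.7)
The plan is a direct case analysis based on the defining conditions of the Rothe diagram. Write $v' := vs_i$; I would start from the key observation that $v$ and $v'$ agree outside positions $\{i, i+1\}$ (with $v'(i) = v(i+1)$ and $v'(i+1) = v(i)$), and consequently $v^{-1}$ and $(v')^{-1}$ agree outside the values $\{v(i), v(i+1)\}$. Setting $c := n - a + 1$, the condition $(a, b) \in D(v)$ reads $v(b) < c$ and $b < v^{-1}(c)$, and the argument amounts to tracking how each clause changes when $v$ is replaced by $v'$.

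The easy direction should be quick. For $b \notin \{i, i+1\}$, we have $v(b) = v'(b)$, so the first clause is unchanged; and $v^{-1}$ and $(v')^{-1}$ differ only at $c \in \{v(i), v(i+1)\}$, where both $v^{-1}(c)$ and $(v')^{-1}(c)$ lie in $\{i, i+1\}$, so the inequality $b < v^{-1}(c)$ has the same truth value whether read with $v$ or $v'$. This yields the first claim: $D(v)$ and $D(v')$ agree in every column other than $i$ and $i+1$.

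The heart of the proof is the analysis of those two exceptional columns. For $b = i$, the first clause switches from $c > v(i)$ to $c > v(i+1)$, and using $v(i) < v(i+1)$ I would split on $c$. For $c \le v(i)$ or $c > v(i+1)$ both clauses are unaffected, so $(a,i) \in D(v) \iff (a,i) \in D(v')$. At $c = v(i+1)$ the second clause $v^{-1}(v(i+1)) = i+1 > i$ holds automatically, so $(n-v(i+1)+1,\ i) \in D(v)\setminus D(v')$ --- the ``deleted'' box is forced. For $c \in (v(i), v(i+1))$ one has $v^{-1}(c) = (v')^{-1}(c) \notin \{i, i+1\}$, so the second clause is equivalent under $v$ and $v'$ and reads as $v^{-1}(c) > i+1$; hence $(a,i) \in D(v)\setminus D(v')$ exactly in this case. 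A parallel analysis for $b = i+1$ should show that $(a, i+1) \in D(v')\setminus D(v)$ under the same criterion $c \in (v(i), v(i+1))$ with $v^{-1}(c) > i+1$, delivering the ``move right'' bijection $(a, i) \leftrightarrow (a, i+1)$ on the remaining lost/gained boxes.

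The main potential bookkeeping obstacle is a convention mismatch in the statement: ``row $v(i+1)$'' refers to the row at height $n - v(i+1) + 1$ from the bottom (where the column-$(i{+}1)$ dot of $v$ sits), equivalently row $v(i+1)$ counted from the top. With this reading, ``strictly above row $v(i+1)$'' becomes $c < v(i+1)$, which intersected with the column-$i$ membership condition $c > v(i)$ cuts out exactly the range $c \in (v(i), v(i+1))$ where the moved boxes live; and ``position $(v(i+1), i)$'' is the $c = v(i+1)$ box in column $i$. I note that the last-ascent hypothesis plays no role in the lemma as stated --- any ascent would do --- but this is the relevant case for the application of the lemma in the proof of Theorem~\ref{thm:vertex_dec}.
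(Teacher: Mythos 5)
The paper supplies no proof of Lemma~\ref{lemma:diagramchasing}; it is introduced as a ``straightforward fact'' and only illustrated in Figure~\ref{fig:142653}. So there is no argument of the paper's to compare against, and the question is simply whether your proof is right. It is.

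Your translation of the definition in~(\ref{eqn:diagramdef}) — after silently correcting the paper's typo of $w$ for $v$ there — into the clean form $(a,b)\in D(v)\iff v(b)<c$ and $b<v^{-1}(c)$ with $c=n-a+1$ is the natural starting point, and the subsequent case analysis on how each clause responds to $v\mapsto vs_i$ is carried out correctly. I verified the two exceptional columns: in column $i$ the boxes with $c>v(i+1)$ are unchanged, the box at $c=v(i+1)$ satisfies the second clause automatically (since $v^{-1}(v(i+1))=i+1>i$) and so is present in $D(v)$ but absent from $D(vs_i)$, and the boxes with $v(i)<c<v(i+1)$ are exactly those with $v^{-1}(c)>i+1$; in column $i+1$ these reappear and nothing else changes, while boxes with $c>v(i+1)$ persist. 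The sketch for $b=i+1$ (``parallel analysis'') is brief, but the claims are correct and easily fleshed out from what you wrote.

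Two of your side remarks deserve emphasis because they catch genuine issues with the paper's statement. First, ``row $v(i+1)$'' in the lemma is indeed inconsistent with the paper's declared row-from-the-bottom convention; it must be read as row $n-v(i+1)+1$ from the bottom (the row of the column-$(i{+}1)$ dot), and Example~\ref{exa:31542previous} / Figure~\ref{fig:142653} confirms this reading, since for $v=142653$, $i=3$, $v(i+1)=6$ the deleted box is $(1,3)$ and the moved boxes are at rows $2$ and $4$. Second, you are right that the last-ascent hypothesis is never used — only that $i$ is an ascent — though the last-ascent case is what feeds into Lemma~\ref{lemma:lastisi} and Theorem~\ref{thm:vertex_dec}. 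A possible alternative proof would use the paper's dot-and-hook description of $D(v)$ rather than the inequalities, tracking how the two affected hooks shift; that would be a little more pictorial but no shorter, so your route is perfectly reasonable.
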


Also, we need:

\begin{lemma}
\label{lemma:lastisi}
Let $i$ be the last ascent of $v$. The rightmost and southmost box of $D(v)$
(which is the position of $z_{\rm last}$) is in column $i$, and, moreover, the
canonical labeling fills that box with $i$.
\end{lemma}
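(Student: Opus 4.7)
The plan is to establish the three assertions in the statement one at a time: first that column $i$ is the rightmost column occupied by $D(v)$, then that the southmost box in column $i$ lies in row $r = n - v(i+1) + 1$, and finally that its canonical label equals $i$.

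I would begin by reformulating (\ref{eqn:diagramdef}) in terms of inversions. Setting $k = v^{-1}(n-r+1)$, the two conditions defining a box $(r,j) \in D(v)$ become $k > j$ and $v(k) > v(j)$, with $r$ recovered from $k$ as $n - v(k) + 1$. In other words, the boxes in column $j$ are in natural bijection with the inversions $\{k > j : v(k) > v(j)\}$, and the row occupied by each such box is $n - v(k) + 1$.

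Since $i$ is the last ascent of $v$, we have $v(i) < v(i+1) > v(i+2) > \cdots > v(n)$. For $j > i$, the suffix $v(j), v(j+1), \ldots, v(n)$ is strictly decreasing, so no $k > j$ satisfies $v(k) > v(j)$; hence column $j$ is empty. On the other hand, $k = i+1$ witnesses $v(k) > v(i)$, so column $i$ is nonempty. Within column $i$, the southmost (smallest row) box corresponds to the \emph{largest} value among $\{v(k) : k > i,\ v(k) > v(i)\}$, which by monotonicity of the decreasing tail is $v(i+1)$. Thus the southmost-rightmost box, which is $z_{\rm last}$, sits at $(n - v(i+1) + 1, i)$.

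Finally, for the canonical label I would count the boxes in row $r = n - v(i+1) + 1$ lying weakly left of column $i$; by the same reformulation these are precisely the positions $j \leq i$ with $v(j) < v(i+1)$. Calling this count $t$, the rightmost box in row $r$ gets the label $r + t - 1$. A short double-counting argument finishes the job: the $v(i+1) - 1$ values less than $v(i+1)$ are partitioned among positions $\leq i$, position $i+1$ (contributes $0$), and positions $> i+1$ (contributes exactly $n - i - 1$, since the strictly decreasing tail $v(i+2), \ldots, v(n)$ consists entirely of values below $v(i+1)$). Thus $t = v(i+1) + i - n$, and substitution gives $r + t - 1 = (n - v(i+1) + 1) + (v(i+1) + i - n) - 1 = i$, as required. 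The argument is pure bookkeeping with (\ref{eqn:diagramdef}) and the last-ascent structure, and I do not anticipate any real obstacle.
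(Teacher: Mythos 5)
Your proof is correct, and it takes a genuinely different route from the paper's. You reformulate the box condition in terms of inversions and then do a direct double count: column $i$ is the rightmost nonempty column because the tail $v(i+1) > v(i+2) > \cdots > v(n)$ is decreasing; the bottom box of column $i$ sits in row $n-v(i+1)+1$ because the largest value in the tail is $v(i+1)$; and the number of boxes in that row is $t = v(i+1) + i - n$, forcing the rightmost label $r+t-1 = i$. All three steps check out, including the value-counting that pins down $t$. The paper instead observes that the canonical label of a box equals its Manhattan distance from the southwest corner minus the number of dots strictly southwest, a formula symmetric in rows and columns; hence the canonical labeling coincides with the analogous column-rastered labeling, under which the bottommost box of the rightmost column trivially receives the label equal to its column index. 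The paper's argument is slicker and reusable (the Manhattan-distance characterization of canonical labels is of independent interest), but it leaves the identification of the rightmost nonempty column implicit; your argument is more elementary and makes every step, including that identification, fully explicit.
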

\begin{proof}
Note that the label of a box under the canonical labeling is the ``Manhattan
distance'' of that box from the southwest corner minus the number of dots
southwest of the box.  From this description, the canonical labeling is the
same whether it is defined by filling boxes as one reads along the rows or the
columns. The Lemma follows from the latter description.
\end{proof}

\begin{example}
Let $v=142653$. Then the last ascent occurs at $i=3$. In
Figure~\ref{fig:142653} we draw the diagrams for $D(v)$ and $D(vs_i)$,
as an illustration of Lemma~\ref{lemma:diagramchasing} and~\ref{lemma:lastisi}.

\begin{figure}[h]
\begin{picture}(300,90)
\put(37.5,0){\makebox[0pt][l]{\framebox(90,90)}}
\put(37.5,30){\line(1,0){45}}
\put(67.5,30){\line(0,-1){30}}
\put(37.5,15){\line(1,0){45}}
\put(82.5,30){\line(0,-1){30}}
\put(52.5,0){\line(0,1){75}}
\put(52.5,75){\line(-1,0){15}}
\put(52.5,60){\line(-1,0){15}}
\put(52.5,45){\line(-1,0){15}}
\thicklines
\put(45,82.5){\circle*{4}}
\put(45,82.5){\line(1,0){82.5}}
\put(45,82.5){\line(0,1){7.5}}
\put(105,22.5){\circle*{4}}
\put(105,22.5){\line(1,0){22.5}}
\put(105,22.5){\line(0,1){67.5}}
\put(60,37.5){\circle*{4}}
\put(60,37.5){\line(1,0){67.5}}
\put(60,37.5){\line(0,1){52.5}}
\put(75,67.5){\circle*{4}}
\put(75,67.5){\line(1,0){52.5}}
\put(75,67.5){\line(0,1){22.5}}
\put(90,7.5){\circle*{4}}
\put(90,7.5){\line(1,0){37.5}}
\put(90,7.5){\line(0,1){82.5}}
\put(120,52.5){\circle*{4}}
\put(120,52.5){\line(1,0){7.5}}
\put(120,52.5){\line(0,1){37.5}}

\thinlines
\put(67.5,45){\line(1,0){15}}
\put(82.5,45){\line(0,1){15}}
\put(82.5,60){\line(-1,0){15}}
\put(67.5,45){\line(0,1){15}}
\put(41,2.5){$1$}
\put(41,17.5){$2$}
\put(41,32.5){$3$}
\put(41,47.5){$4$}
\put(41,62.5){$5$}
\put(56,2.5){$2$}
\put(56,17.5){$3$}
\put(71,2.5){$3$}
\put(71,17.5){$4$}
\put(71,47.5){$5$}


\put(197.5,0){\makebox[0pt][l]{\framebox(90,90)}}
\put(197.5,30){\line(1,0){30}}
\put(227.5,30){\line(0,-1){30}}
\put(197.5,15){\line(1,0){30}}
\put(242.5,30){\line(0,-1){15}}
\put(242.5,30){\line(1,0){15}}
\put(257.5,30){\line(0,-1){15}}
\put(257.5,15){\line(-1,0){15}}
\put(212.5,0){\line(0,1){75}}
\put(212.5,75){\line(-1,0){15}}
\put(212.5,60){\line(-1,0){15}}
\put(212.5,45){\line(-1,0){15}}
\thicklines
\put(205,82.5){\circle*{4}}
\put(205,82.5){\line(1,0){82.5}}
\put(205,82.5){\line(0,1){7.5}}
\put(265,22.5){\circle*{4}}
\put(265,22.5){\line(1,0){22.5}}
\put(265,22.5){\line(0,1){67.5}}
\put(220,37.5){\circle*{4}}
\put(220,37.5){\line(1,0){67.5}}
\put(220,37.5){\line(0,1){52.5}}
\put(235,7.5){\circle*{4}}
\put(235,7.5){\line(1,0){52.5}}
\put(235,7.5){\line(0,1){82.5}}
\put(250,67.5){\circle*{4}}
\put(250,67.5){\line(1,0){37.5}}
\put(250,67.5){\line(0,1){22.5}}
\put(280,52.5){\circle*{4}}
\put(280,52.5){\line(1,0){7.5}}
\put(280,52.5){\line(0,1){37.5}}

\thinlines
\put(242.5,45){\line(1,0){15}}
\put(257.5,45){\line(0,1){15}}
\put(257.5,60){\line(-1,0){15}}
\put(242.5,45){\line(0,1){15}}
\put(201,2.5){$1$}
\put(201,17.5){$2$}
\put(201,32.5){$3$}
\put(201,47.5){$4$}
\put(201,62.5){$5$}
\put(216,2.5){$2$}
\put(216,17.5){$3$}
\put(246,17.5){$4$}
\put(246,47.5){$5$}

\put(-5,45){$D(v)=$}
\put(144,45){$D(vs_i)=$}
\end{picture}
\caption{\label{fig:142653} $D(v)$ versus $D(vs_i)$}
\end{figure}

\qed \end{example}

\noindent
\emph{Proof of Theorem~\ref{thm:vertex_dec}:}
We first prove (A). Since the facets of
$\Delta_{v,w}$ are pipe dreams ${\mathcal P}$ such that
$\prod{\mathcal P}$ is a reduced word for $w_0w$, the assertion that $V$ is
a cone point of $\Delta_{v,w}$ amounts to the following:

\begin{claim}
\label{claim:everyface}
No reduced pipe dream for $w_0w$ puts a
$+$ at $z_{\rm last}$.
\end{claim}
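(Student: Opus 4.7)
My plan is to assume for contradiction that a reduced pipe dream $\mathcal{P}$ for $w_0w$ has a $+$ at $z_{\rm last}$, and derive a contradiction with the hypothesis that $i$ is a descent of $w$. By Lemma~\ref{lemma:lastisi} the canonical label of $z_{\rm last}$ is $i$, so the $+$ there contributes a factor $u_i$ to the NilHecke product $\prod\mathcal{P}$. The wrinkle is that $z_{\rm last}$ need not be the last box of $D(v)$ in the reading order; the boxes following $z_{\rm last}$ are exactly those strictly below its row $a_0$.

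The key combinatorial step is to show that every such following box carries a canonical label at most $i-2$, so that its generator commutes with $u_i$ in the NilHecke algebra. First, for $a'<a_0$ one checks that the dot column $d_{a'}=v^{-1}(n-a'+1)$ satisfies $d_{a'}\le i-1$: since $i$ is the last ascent of $v$, the values $v(i+1),\dots,v(n)$ are strictly decreasing, and $(a_0,i)\in D(v)$ but $(a',i)\notin D(v)$ (the latter because $z_{\rm last}$ is the southmost box in column $i$) together force $d_{a'}\le i-1$. A refined count then shows $a'+|\{c:(a',c)\in D(v)\}|\le i-1$, giving max label in row $a'$ at most $i-2$. The refinement exploits that $d_1,\dots,d_{a_0-1}$ is an injection into $\{1,\dots,i-1\}$, so the rank-$R$ element of this $(a_0-1)$-set is at most $i-a_0+R$; combined with the identity $a'+|\text{boxes in row }a'|=d_{a'}+|\{a''<a':d_{a''}\ge d_{a'}\}|$ (obtained by counting positions with $v$-value exceeding $n-a'$ that sit at or beyond $d_{a'}$), this yields the desired bound.

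With commutativity in hand, I can slide the $u_i$ of $z_{\rm last}$ past all subsequent letters to the end of the reading, obtaining $u_{w_0w}=\prod\mathcal{P}=P_{-}\cdot u_i$, where $P_{-}$ is the NilHecke product of $\mathcal{P}\setminus\{z_{\rm last}\}$. Since $i$ is a descent of $w$, $w_0w$ has an ascent at $i$, i.e., $(w_0w)s_i>w_0w$ with $\ell((w_0w)s_i)=\ell(w_0w)+1$. Now $\#(\mathcal{P}\setminus\{z_{\rm last}\})=\ell(w_0w)-1$, so $\ell(P_{-})\le\ell(w_0w)-1$. The only way to achieve $P_{-}\cdot u_i=u_{w_0w}$ within this length budget is via the NilHecke rule $u_y\cdot u_i=u_{ys_i}$ with $y=w_0ws_i$ and $\ell(y)=\ell(w_0w)-1$; but this contradicts $\ell(w_0ws_i)=\ell(w_0w)+1$. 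This contradiction proves Claim~\ref{claim:everyface}. The main obstacle is the refined bound in step two: the naive inequality $d_{a'}\le i-1$ alone is insufficient, and one must exploit the joint structure of $d_1,\dots,d_{a_0-1}$ as an injection into $\{1,\dots,i-1\}$.
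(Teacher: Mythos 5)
Your proof is correct, and it follows the same overall strategy as the paper's one-sentence argument (combine Lemma~\ref{lemma:lastisi} with the fact that $i$ is an ascent of $w_0w$), but you explicitly supply the step the paper takes for granted. The paper's proof tacitly treats the $u_i$ contributed by $z_{\rm last}$ as sitting at the right end of the Demazure product; under the row-by-row reading the paper declares, this is \emph{not} literal, since $z_{\rm last}$ is the southmost box in the rightmost column and boxes in strictly lower rows are read after it. Your combinatorial lemma --- that every box in a row $a'<a_0$ has canonical label at most $i-2$ --- is exactly what is needed to slide $u_i$ to the end, and your derivation of it (using that $d_1,\dots,d_{a_0-1}$ are distinct elements of $\{1,\dots,i-1\}$ together with the counting identity relating $a'+|\text{boxes in row }a'|$ to $d_{a'}$) checks out: writing $A=\#\{a''<a':d_{a''}<d_{a'}\}$, $B=\#\{a''<a':d_{a''}>d_{a'}\}$, $C=\#\{a'<a''<a_0:d_{a''}<d_{a'}\}$ and using $d_{a'}\le i-a_0+(1+A+C)$ gives max label $\le i-a_0+(a'-1)+C\le i-2$ since $C\le a_0-a'-1$. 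The closing NilHecke length-budget argument is a slightly roundabout but valid replacement for the standard fact that a reduced word ending in $s_i$ forces $i$ to be a right descent. One remark worth making: your lemma is a special case of the general statement that any two boxes of $D(v)$ with one strictly northeast of the other have canonical labels differing by at least $2$; that stronger fact is what makes the row reading and the column reading (in which $z_{\rm last}$ literally is the last letter) commutation-equivalent, and is presumably the unstated content behind the paper's terse proof. Your version is a clean, self-contained way to get exactly the commutations actually needed here.
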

\noindent
\emph{Proof of Claim~\ref{claim:everyface}:}
Combine  Lemma~\ref{lemma:lastisi}
with the hypothesis that $i$ is an ascent of $w_0w$.
\qed

Whenever one vertex decomposes at a cone point, the link is automatically
equal to (rather than merely being a subset of) the deletion. Thus we
complete (A) by proving:

\begin{claim}
\label{claim:firsthomeo}
The homeomorphism between
\[\del_{V}(\Delta_{v,w})=\link_{V}(\Delta_{v,w}) \mbox{\ \  and
$\Delta_{vs_i,w}$}\]
is obtained as follows: given a pipe dream
${\rm Pipe}({\mathcal P})$ of a face ${\mathcal P}\in
\del_{V}(\Delta_{v,w})$, construct a pipe dream
${\rm Pipe}({\widetilde {\mathcal P}})$ of a face ${\widetilde {\mathcal P}}$
of $\Delta_{vs_i,w}$ by first deleting the $+$ in the position of
$z_{\rm last}$ and moving each remaining $+$ in column $i$ of
${\rm Pipe}({\mathcal P})$ one step to the right into column $i+1$.
\end{claim}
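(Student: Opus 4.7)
The plan is to exhibit the map $\Phi \colon \mathcal{P} \mapsto \widetilde{\mathcal{P}}$ as a simplicial isomorphism from $\del_V(\Delta_{v,w})$ (which equals $\link_V(\Delta_{v,w})$ by Claim~\ref{claim:everyface}) onto $\Delta_{vs_i,w}$; this yields the claimed homeomorphism. The starting point is a vertex bijection. From Lemma~\ref{lemma:diagramchasing}, together with the observation that column $i{+}1$ of $D(v)$ is empty whenever $i$ is the last ascent of $v$ (readily verified from the defining inequalities of $D(v)$), the rule ``delete $z_{\rm last}$, shift the remaining boxes of column~$i$ one step right, identity elsewhere'' is a well-defined bijection $D(v)\setminus\{z_{\rm last}\} \to D(vs_i)$ on vertex sets.

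The second step is to verify compatibility with the canonical labeling and the reading order. A label equals the row index of a box plus the number of diagram boxes strictly to its left in the same row; since column $i{+}1$ of $D(v)$ contributes nothing and column $i$ of $D(vs_i)$ is empty, the label of $(r,i)$ in $D(v)$ agrees with the label of $(r,i{+}1)$ in $D(vs_i)$, and labels elsewhere coincide. The reading order (rows top-to-bottom, then left-to-right) aligns these corresponding boxes slot-for-slot, and by Lemma~\ref{lemma:lastisi} the label at $z_{\rm last}$ is $i$. Consequently, for any pipe dream $\mathcal{P}$ on $D(v)$ carrying a $\cross$ at $z_{\rm last}$,
\[
\prod \mathcal{P} \;=\; \Bigl(\prod \widetilde{\mathcal{P}}\Bigr)\, u_i
\]
in the NilHecke algebra.

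The heart of the argument is to promote this identity into the Bruhat-order equivalence
\[
\prod \mathcal{P} \;\geq\; w_0 w \quad\Longleftrightarrow\quad \prod \widetilde{\mathcal{P}} \;\geq\; w_0 w.
\]
Given this, together with the characterization (inherent in Proposition~\ref{prop:subwordtranslated}) of faces of a pipe complex as those pipe dreams whose Demazure product is $\geq w_0 w$, the vertex bijection sends faces of $\del_V(\Delta_{v,w})$ onto faces of $\Delta_{vs_i,w}$, establishing that $\Phi$ is the desired simplicial isomorphism. The direction $(\Leftarrow)$ is immediate from $\sigma u_i \geq \sigma$ in Bruhat order. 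For $(\Rightarrow)$, set $\sigma = \prod \widetilde{\mathcal{P}}$: if $\sigma s_i < \sigma$ then $\sigma u_i = \sigma$, so $\sigma \geq w_0 w$ directly; otherwise $\sigma u_i = \sigma s_i \geq w_0 w$, and the case~(A) hypothesis that $i$ is an ascent of $w_0 w$---equivalently $(w_0 w) s_i > w_0 w$---invokes the Bruhat lifting property to give $\sigma \geq w_0 w$. This one invocation of lifting is the only substantive step; everything else is diagram chasing and bookkeeping about the canonical labeling.
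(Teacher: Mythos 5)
Your proof is correct, and it follows the same basic mechanism as the paper (the vertex bijection given by Lemma~\ref{lemma:diagramchasing}, the identification of the label at $z_{\rm last}$ with $i$ via Lemma~\ref{lemma:lastisi}, and the use of the hypothesis that $i$ is an ascent of $w_0 w$). The main difference is a matter of which level you argue at. The paper works with a pipe dream $\mathcal{P}$ whose Demazure product is \emph{exactly} $w_0 w$ and reduced --- i.e., it argues for facets --- where the ascent hypothesis directly kills the terminal $s_i$; it then appeals to injectivity, reversibility, and preservation of containment to upgrade the facet bijection to a simplicial isomorphism (which works because faces are precisely subsets of facets). You instead verify the face condition $\prod\widetilde{\mathcal{P}}\geq w_0 w$ directly on \emph{all} faces, which forces you to invoke the Bruhat lifting property (in the form $a\leq b$ implies $\min(a,as)\leq\min(b,bs)$) for the nontrivial implication; this buys a more transparent, uniform argument at the small cost of one genuine Coxeter-theoretic input that the paper's route avoids. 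One shared subtlety worth flagging: the identity $\prod \mathcal{P} = (\prod \widetilde{\mathcal{P}})\,u_i$ is stated as if $z_{\rm last}$ were the final box in the row-by-row reading order, but in general it is not (it is last in the column-by-column reading). Making this precise requires either switching to the column-raster word --- which is commutation-equivalent and yields the same pipe complex --- or verifying that every box in a row strictly below $r_0$ has canonical label $\leq i-2$ (so the $u_i$ commutes to the right end). This point is glossed over in the paper's proof as well, so it is not a defect unique to your argument, but it should be acknowledged in a fully self-contained write-up.
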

\noindent
\emph{Proof of Claim~\ref{claim:firsthomeo}:}
By Lemma~\ref{lemma:diagramchasing}, it follows that
${\widetilde {\mathcal P}}$ is a pipe dream for $D(vs_i)$.
Since
\[{\mathcal P}\in \del_{V}(\Delta_{v,w}),\]
${\rm Pipe}({\mathcal P})$
has a $+$ at the position of $z_{\rm last}$. Now by Lemma~\ref{lemma:lastisi},
$\prod {\mathcal P}$ has an $s_i$ at the right end of the Demazure product, and this
product is by assumption equal to $w_0 w$. But $w_0 w$ has an ascent at position
$i$, so it follows that the same Demazure product with $s_i$ removed still gives
$w_0 w$. This latter product is the same as $\prod {\widetilde {\mathcal P}}$,
so
\[{\widetilde {\mathcal P}} \in \Delta_{vs_i,w}.\]

It is also clear that the map ${\mathcal P}\mapsto {\widetilde {\mathcal P}}$
is injective and reversible and preserves face containment.  Thus the
conclusion follows.
\qed

Now we prove (B).
Let us first analyze $\link_{V}(\Delta_{v,w})$, which by definition
consists of all faces ${\mathcal P}\in \Delta_{v,w}$ that
\begin{itemize}
\item[(a)] do not contain $V$ (being in $\del_{V}(\Delta_{v,w})$)
\item[(b)] but satisfy ${\mathcal P}\cup V\in \Delta_{v,w}$.
\end{itemize}

Translating, (a) says that ${\rm Pipe}({\mathcal P})$ uses a $+$ in position
$z_{\rm last}$, whereas (b) says that removing that $+$ still gives
a face of $\Delta_{v,w}$. In view of this, we have:

\begin{claim}
\label{claim:secondhomeo}
The homeomorphism between
\[\link_{V}(\Delta_{v,w}) \mbox{\  and
$\Delta_{vs_i,w}$}\]
is obtained with as similar map as in Claim~\ref{claim:firsthomeo}: given a pipe dream
${\rm Pipe}({\mathcal P})$ of a face ${\mathcal P}\in
\link_{V}(\Delta_{v,w})$,
we construct a pipe dream
${\rm Pipe}({\widetilde {\mathcal P}})$ of a face ${\widetilde {\mathcal P}}$
of $\Delta_{vs_i,w}$ by first deleting the $+$ in the position of
$z_{\rm last}$ and moving each remaining $+$ in column $i$ of
${\rm Pipe}({\mathcal P})$ one step to the right, into column $i+1$. The same map describes
a homeomorphism between $\del_{V}(\Delta_{v,w})$ and $\Delta_{vs_i,ws_i}$.
\end{claim}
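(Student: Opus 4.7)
The plan is to reduce Claim~\ref{claim:secondhomeo} to standard facts about subword complexes. Via Proposition~\ref{prop:subwordtranslated}, we may regard $\Delta_{v,w}$ as the subword complex $\Delta(Q,w_0w)$, where $Q$ is the canonical reading of $D(v)$; by Lemma~\ref{lemma:lastisi}, its last letter is $s_i$. Let $Q'$ denote $Q$ with the last letter removed. The central step is to show that $Q'$ equals, on the nose, the canonical reading of $D(vs_i)$; once this is in hand, the pipe-dream map described in the claim is exactly the identification induced on face labels by the bijection of vertex sets $D(v)\setminus\{z_{\mathrm{last}}\}\leftrightarrow D(vs_i)$, and the two homeomorphism assertions become purely combinatorial statements about subword complexes.

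To prove that $Q'$ is the canonical reading of $D(vs_i)$, I will use Lemma~\ref{lemma:diagramchasing} to produce a bijection between the boxes of $D(v)\setminus\{z_{\mathrm{last}}\}$ and those of $D(vs_i)$: boxes outside column $i$ are sent to themselves, while boxes of column $i$ strictly above $z_{\mathrm{last}}$ shift one unit rightward into column $i+1$. Since $i$ is the last ascent of $v$, columns $i+1,\ldots,n$ of $D(v)$ are empty, so the shift is free of collisions. Next, using the ``Manhattan distance minus dots southwest'' description of the canonical labeling that appears in the proof of Lemma~\ref{lemma:lastisi}, one checks that labels are preserved under this shift: moving a box from $(r,i)$ to $(r,i+1)$ raises the Manhattan distance by one, while the count of dots southwest also rises by one, reflecting that the swap $v\mapsto vs_i$ pushes the column-$i$ dot downward past the shifted box (into its southwest quadrant). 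I expect this balancing of a Manhattan jump against a dot-count jump to be the main technical subtlety of the proof. Since the shift preserves the row of each box and the left-to-right order within each row, the row-by-row top-to-bottom reading of $D(vs_i)$ then coincides letter-for-letter with $Q'$.

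Given the identification of reading words, the homeomorphism assertions reduce to Demazure product computations. Write $\mathcal{P}'$ for the pipe dream on $D(v)\setminus\{z_{\mathrm{last}}\}$ obtained from $\mathcal{P}$ by erasing the $+$ at $z_{\mathrm{last}}$ (when present); the reading-word identification gives $\prod\widetilde{\mathcal{P}}=\prod\mathcal{P}'$, and $\prod\mathcal{P}=(\prod\mathcal{P}')\cdot s_i$ in the Demazure product. For the link, $F\in\link_{V}(\Delta_{v,w})$ means that $\mathcal{P}$ has a $+$ at $z_{\mathrm{last}}$ and that $\prod\mathcal{P}'\geq w_0w$ in Bruhat order, which is precisely the condition $\widetilde{\mathcal{P}}\in\Delta_{vs_i,w}$. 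For the deletion, $F\in\del_{V}(\Delta_{v,w})$ amounts to $\mathcal{P}$ having a $+$ at $z_{\mathrm{last}}$ together with $\prod\mathcal{P}\geq w_0w$. Since in case (B) the reflection $s_i$ is a descent of $w_0w$, the subword/lifting property of Bruhat order yields
\[u\cdot_{\mathrm{Dem}}s_i\ \geq\ w_0w\quad\Longleftrightarrow\quad u\ \geq\ (w_0w)s_i\ =\ w_0(ws_i)\]
for any element $u$, so the deletion condition becomes $\prod\widetilde{\mathcal{P}}\geq w_0(ws_i)$, i.e., $\widetilde{\mathcal{P}}\in\Delta_{vs_i,ws_i}$. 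Injectivity, surjectivity, and preservation of face-containment are all immediate from the underlying box bijection, so both simplicial isomorphisms follow.
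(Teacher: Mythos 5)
Your overall strategy mirrors the paper's: identify $\Delta_{v,w}$ with a subword complex, use Lemma~\ref{lemma:diagramchasing} to set up a label-preserving box bijection between $D(v)\setminus\{z_{\rm last}\}$ and $D(vs_i)$ (your Manhattan-minus-dots balancing is the right idea, and your Bruhat-lifting derivation $u\cdot_{\mathrm{Dem}}s_i\geq w_0w\Leftrightarrow u\geq (w_0w)s_i$ for the deletion case is correct and nicely explicit). But there is a genuine gap at the very first step that propagates through the rest. You assert, citing Lemma~\ref{lemma:lastisi}, that the last letter of the canonical reading word $Q$ of $D(v)$ is $s_i$, i.e.\ that $z_{\rm last}$ occupies the last position in the top-to-bottom, left-to-right reading. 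Lemma~\ref{lemma:lastisi} does not say this; it says only that $z_{\rm last}$ lies in column $i$ with label $i$. And the assertion is false in general. For $v=12534$ one has $D(v)=\{(1,1),(2,1),(3,1),(4,1),(1,2),(2,2),(3,2),(2,4)\}$, the last ascent is $i=4$, and the reading word is $Q=(4,3,4,2,3,4,1,2)$. Here $z_{\rm last}=z_{24}$ sits at position $6$, not $8$; the actual last letter is $2$. Consequently your central claim that ``$Q'$, $Q$ with its last letter removed, equals the canonical reading of $D(vs_i)$'' fails: $Q$ with its last letter deleted is $(4,3,4,2,3,4,1)$, which is not even a reduced word for $w_0vs_4=54123$, whereas the canonical reading of $D(vs_4)=D(12543)$ is $(4,3,4,2,3,1,2)$.

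What is true, and what your box-bijection argument actually proves, is that deleting the letter at $z_{\rm last}$'s (possibly interior) position from $Q$ yields the canonical reading of $D(vs_i)$. But then both the reduction to the standard ``vertex-decompose a subword complex at its final letter'' fact, and the identity $\prod\mathcal{P}=(\prod\mathcal{P}')\cdot s_i$ on which both homeomorphism claims rest, require an additional commutation argument that you neither state nor prove: every letter of $Q$ read after the position of $z_{\rm last}$, i.e.\ every box of $D(v)$ in a row strictly south of $z_{\rm last}$'s row, must carry a label within distance $\geq 2$ of $i$, so that $u_i$ commutes past all of them. This does hold (one can check from the Manhattan-minus-dots formula that all such labels are $\leq i-2$, using that every dot in a row strictly south of $z_{\rm last}$'s lies strictly west of column $i$, while the dot in $z_{\rm last}$'s own row lies strictly east of column $i$), and it is implicit in the paper's own terse proof; but without making it explicit your Demazure computations for both the link and the deletion do not go through.
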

\begin{proof}
This proof is similar to that for Claim~\ref{claim:firsthomeo}.
The key point of the link claim is that removing the $+$ in position
$z_{\rm last}$ does not change the Demazure product. In the deletion
claim, this removal of a $+$ does change the Demazure product of $w$, but since
$w$ has an ascent at $i$, the resulting Demazure product is $ws_i$ instead.
\end{proof}
The proof of Theorem~\ref{thm:vertex_dec} follows.\qed

\begin{example}
Continuing Example~\ref{exa:31452}, we have that $i=3$ is an ascent of
$w$. The vertex $V$ is the top leftmost vertex of Figure~\ref{fig:31452.53142}
and the link is a $1$-dimensional ball isomorphic to $\Delta_{vs_i,w}$, which
is Example~\ref{exa:31542previous}.  This agrees with
Theorem~\ref{thm:vertex_dec} and Claim~\ref{claim:secondhomeo}.
\qed \end{example}

\subsection{The Kostant--Kumar recursion and the proof of
Theorem~\ref{thm:subwordKpoly}}

S.~Kumar shows~\cite[Theorem 2.2]{Kumar} that the $K$-polynomials (as
defined in Section~\ref{subsection:various})
    \[{\mathcal K}(R/I_{v,w},{\bf t})\]
and hence the equivariant $K$-theory classes
\[[{\mathcal O}_{{\mathcal N}_{v,w}}]_T\in K_T(\Omega_{v}^{\circ}),\]
satisfy the following recursion, originally defined and shown to have
a unique solution by B.~Kostant and S.~Kumar~\cite[Proposition
  2.4]{Kostant.Kumar}.  (They use significantly different language and
notation; our version previously appeared in~\cite[Theorem
  1]{Knutson:patches}.)

\begin{theorem}
Let $v,w\in S_n$.
\begin{itemize}
\item If $v\not\leq w$, then
\[\mathcal{K}(R/I_{v,w},\mathbf{t})=0.\]
\item If $v=w_0$, then $w=w_0$ (or we are in the previous case).  Then
  \[\mathcal{K}(R/I_{v,w},\mathbf{t})=1.\]
\item Otherwise, let $i$ be a right ascent of $v$, so $vs_i>v$.  Then
\begin{enumerate}
\item If $i$ is a descent of $w$, so $ws_i<w$, then
  $$\mathcal{K}(R/I_{v,w},\mathbf{t})=\mathcal{K}(R/I_{vs_i,w},\mathbf{t}).$$
\item If $i$ is an ascent of $w$, so $ws_i>w$, then
\begin{multline}\nonumber
\mathcal{K}(R/I_{v,w},\mathbf{t}) =
\mathcal{K}(R/I_{vs_i,w},\mathbf{t})
+(1-t_{v(i)}/t_{v(i+1)})\mathcal{K}(R/I_{vs_i,ws_i},\mathbf{t})\\ \nonumber
-(1-t_{v(i)}/t_{v(i+1)})\mathcal{K}(R/I_{vs_i,w},\mathbf{t}).
\end{multline}
\end{enumerate}
\end{itemize}
\end{theorem}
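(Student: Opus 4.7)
The plan is to reduce the theorem to the classical Kostant--Kumar recursion for equivariant $K$-theory classes of Schubert varieties, via the geometric identification already established earlier in the paper. Specifically, in the course of the proof of Theorem~\ref{thm:specialization} we saw that
\[
\mathcal{K}(R/I_{v,w},\mathbf{t}) \;=\; [\mathcal{O}_{X_w}]_T\bigr|_{e_v},
\]
where the $K$-polynomial is taken with respect to the usual torus action that gives $z_{ij}$ the weight $t_{v(j)}/t_{n-i+1}$. Since the total restriction map to the $T$-fixed points is injective on $K_T(G/B)$, the collection of localizations $\{[\mathcal{O}_{X_w}]_T|_{e_v}\}_{v}$ is characterized by the recursion of the theorem, so it suffices to verify each of the four cases at the level of these fixed-point localizations.

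For the base cases: if $v\not\leq w$ then the Bruhat decomposition gives $e_v\notin X_w$, so $\mathcal{N}_{v,w}=X_w\cap \Omega_v^\circ$ is empty, $I_{v,w}$ is the unit ideal, and $\mathcal{K}(R/I_{v,w},\mathbf{t})=0$. If $v=w_0$, then $\Omega_{w_0}^\circ=\{e_{w_0}\}$ is a single reduced point, so for $w=w_0$ we have $R/I_{w_0,w_0}=\mathbb{C}$ and $\mathcal{K}=1$ (while $w<w_0$ is subsumed by the previous case).

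For the recursive cases, I would exploit the $\mathbb{P}^1$-fibration $\pi_i\colon G/B \to G/P_i$, where $P_i\supset B$ is the minimal parabolic generated by $s_i$. Assuming $vs_i>v$, the two fixed points $e_v$ and $e_{vs_i}$ lie in a common fiber of $\pi_i$. In case~3(a), where $ws_i<w$, the Schubert variety $X_w$ is saturated under $\pi_i$ (a union of fibers), so $[\mathcal{O}_{X_w}]_T$ is pulled back from $G/P_i$ and its localization is constant along the fiber, yielding the desired equality $[\mathcal{O}_{X_w}]_T|_{e_v}=[\mathcal{O}_{X_w}]_T|_{e_{vs_i}}$. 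In case~3(b), where $ws_i>w$, applying $\pi_i^*\pi_{i,*}$ to $[\mathcal{O}_{X_w}]_T$ produces the Demazure-type class $[\mathcal{O}_{X_{ws_i}}]_T$ (with a correction coming from $R^1\pi_{i,*}$); expanding this via Atiyah--Bott localization on the $\mathbb{P}^1$-fiber containing $e_v$ and $e_{vs_i}$, and using the fact that the tangent line to the fiber at $e_v$ has $T$-weight $t_{v(i)}/t_{v(i+1)}$, gives after rearrangement the claimed three-term relation.

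The main obstacle is the careful bookkeeping of weights and signs in the Atiyah--Bott localization on the fiber, in particular producing the factor $1-t_{v(i)}/t_{v(i+1)}$ in its exact stated form rather than its reciprocal or negative; the orientation and convention-matching at the two fixed points of the fiber is delicate. Rather than redoing this computation from scratch, I would invoke the explicit formulation of the Kostant--Kumar recursion given in \cite[Theorem~1]{Knutson:patches} (following \cite{Kostant.Kumar}), which after translating conventions for the torus action and for the indexing of coordinates matches the recursion claimed here exactly; the proof then reduces to a reference together with the geometric identification recalled in the first paragraph.
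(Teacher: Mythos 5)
Your proposal is correct and ultimately takes the same route as the paper: the paper states this recursion as a recall of the result of Kostant--Kumar (with the pointer to \cite[Theorem~1]{Knutson:patches}) and offers no proof of its own, which is exactly where your argument lands after the geometric setup. The extra sketch you give via $\mathbb{P}^1$-fibrations, $\pi_i$-saturation, and Demazure operators is sound intuition for why the recursion holds, but goes beyond what the paper itself does for this statement.
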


Our strategy to prove Theorem~\ref{thm:subwordKpoly} is to show that the
$K$-polynomials under the usual action for the Stanley--Reisner rings
$R/K_{v,w}$ associated to the pipe complexes $\Delta_{v,w}$ satisfy the same
recursion.  Our proof for this fact parallels that of \cite{Knutson:patches}
for subword complexes; we could in fact refer to \cite[Corollary
  2]{Knutson:patches} by showing that the grading on $R$ given by the usual
action matches the grading given in the cited Corollary. However, that
matching demands about as much analysis as the direct argument we give below.

If $v\not\leq w$, then $\Delta_{v,w}$ is the empty complex (since
$w_0v\not\geq w_0w$, so no subwords of a reduced word for $w_0v$ can be a
reduced word for $w_0w$).  Thus, in this case,
\[\mathcal{K}(R/K_{v,w},\mathbf{t})=0.\]

If $v=w=w_0$, then $\Delta_{v,w}$ is the simplicial complex (on zero vertices)
whose only face is the empty face.  Therefore, in this case
\[\mathcal{K}(R/K_{v,w},\mathbf{t})=1.\]

Otherwise, we rely on the vertex decomposition of $\Delta_{v,w}$ given by
Theorem~\ref{thm:vertex_dec}.  First note that the homeomorphisms of
Claims~\ref{claim:firsthomeo} and~\ref{claim:secondhomeo} are
weight preserving by the following argument.  Boxes in $D(v)$ which are not in
column $i$ or $i+1$ remain in the same place, and for $k\neq i,i+1$, the
variable $z_{jk}$ has weight
\[t_{v(k)}-t_{n-j+1}=t_{vs_i(k)}-t_{n-j+1}\]
in both
$\mathbb{C}[\mathbf{z}^{(v)}]$ and $\mathbb{C}[\mathbf{z}^{(vs_i)}]$.  A box
in the column $i$ and some row $j$ of $D(v)$ corresponds to the box in row
$j$ and column $i+1$ in $D(vs_i)$ (unless $n-j+1=v(i+1)$, in which case the
box is deleted).  The weight of $z_{j,i}$ in $\mathbb{C}[\mathbf{z}^{(v)}]$ is
\[t_{v(i)}-t_{n-j+1},\]
which is equal to
\[t_{vs_i(i+1)}-t_{n-j+1},\]
the weight of $z_{j,i+1}$ in $\mathbb{C}[\mathbf{z}^{(vs_i)}]$.

Furthermore, the variable $z_{\mathrm{last}}$ corresponding to $V$ is
$z_{n-v(i+1)+1,i}$.  Therefore, $z_{\mathrm{last}}$ has weight
$t_{v(i)}-t_{v(i+1)}$.

A face of $\Delta_{v,w}$ either contains $V$ or not.  Therefore, it is either
a face of $\del_V(\Delta_{v,w})$, or the union of a face of
$\link_V(\Delta_{v,w})$ with $V$.  Let $\rho(a)$ denote the weight of the
  variable associated to a vertex $a$.  Now \cite[Theorem
    1.13]{Miller.Sturmfels} (with the appropriate substitution to account for
  our use of the usual action rather than the rescaling action) states
  that $$\mathcal{K}(R/K_{v,w},\mathbf{t})=\sum_{\sigma\in\Delta_{v,w}}
  \left(\prod_{a\in\sigma} \mathbf{t}^{\rho(a)} \cdot \prod_{a\not\in\sigma}
  (1-\mathbf{t}^{\rho(a)}) \right).$$

When $i$ is a descent of $w$, Theorem 5.4 (A) asserts
\[\link_V(\Delta_{v,w})=\del_V(\Delta_{v,w})=\Delta_{vs_i,w}.\]
Therefore
\begin{eqnarray}\nonumber
\mathcal{K}(R/K_{v,w},\mathbf{t}) & = &
t_{v(i)}/t_{v(i+1)}\mathcal{K}(R/K_{vs_i,w},\mathbf{t})+
(1-t_{v(i)}/t_{v(i+1)})\mathcal{K}(R/K_{vs_i,w},\mathbf{t})\\\nonumber
& = & \mathcal{K}(R/K_{vs_i,w},\mathbf{t}).\nonumber
\end{eqnarray}
When $i$ is an ascent of $w$, Theorem 5.4 (B) states that
\[\link_V(\Delta_{v,w})=\Delta_{vs_i,w} \mbox{\ \ and \ \
$\del_V(\Delta_{v,w})=\Delta_{vs_i,ws_i}$.}\]
Therefore,
\begin{eqnarray}\nonumber
\mathcal{K}(R/K_{v,w},\mathbf{t}) &
= & t_{v(i)}/t_{v(i+1)}\mathcal{K}(R/K_{vs_i,w},\mathbf{t})
+ (1-t_{v(i)}/t_{v(i+1)})\mathcal{K}(R/K_{vs_i,ws_i},\mathbf{t})\\\nonumber
& = & \mathcal{K}(R/K_{vs_i,w},\mathbf{t})
+ (1-t_{v(i)}/t_{v(i+1)})\mathcal{K}(R/K_{vs_i,ws_i},\mathbf{t})\\ \nonumber
& & \ \ \ \ \ \ \ \ \ \ \ \ \ \ \ \ \ \ \ \ \ \ \ \ \ \ \ \ \ \ \
- (1-t_{v(i)}/t_{v(i+1)})\mathcal{K}(R/K_{vs_i,w},\mathbf{t}).\nonumber
\end{eqnarray}

Therefore, the $K$-polynomials for $R/K_{v,w}$ satisfy the same recurrence
relations as the $K$-polynomials for $R/I_{v,w}$, and hence they are
equal.\qed

\subsection{Proof of Theorem~\ref{thm:nonfacepipedreamargument}}
The following is immediate from
Lemma~\ref{lemma:diagramchasing}:
\begin{deflemma}
Given $f\in {\mathbb C}[{\bf z}^{(vs_i)}]$
let $f^{\circ}$ be obtained
by the substitution $z_{j,i+1}\mapsto z_{j,i}$. Then
$f^\circ\in {\mathbb C}[{\bf z}^{(v)}]$.
\end{deflemma}

In what follows, let $\prec$ refer to the lexicographic term order we use on
$Z^{(vs_i)}$ and $\prec_{\circ}$ be the term order on $Z^{(v)}$; see
Section~2.3 for a definition of this term order.

The following is clear:

\begin{lemma}
\label{lemma:circlead}
If ${\mathcal L}$ is the leading term of $f\in {\mathbb C}[{\bf z}^{(vs_i)}]$
with respect to
$\prec$, then ${\mathcal L}^{\circ}$ is the leading term of
$f^{\circ} \in {\mathbb C}[{\bf z}^{(v)}]$ with respect to
$\prec_{\circ}$.
\end{lemma}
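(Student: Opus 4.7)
The plan is to verify two properties of the substitution $z_{j,i+1}\mapsto z_{j,i}$: (i) it is injective on monomials, so that no two distinct terms of $f$ can collide in $f^\circ$, and (ii) it preserves the term order, in the sense that $m_1\succ m_2$ in $\mathbb{C}[\mathbf{z}^{(vs_i)}]$ implies $m_1^\circ\succ_\circ m_2^\circ$ in $\mathbb{C}[\mathbf{z}^{(v)}]$. Taken together these immediately yield the lemma, since then $\mathcal{L}^\circ$ remains a term of $f^\circ$ and strictly dominates the images of all other terms.

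For (i) I would invoke Lemma~\ref{lemma:diagramchasing}: in $\mathbf{z}^{(vs_i)}$ the column-$i$ variables occupy rows $j<v(i+1)$, while the column-$(i+1)$ variables occupy rows $j>v(i+1)$. Hence under the substitution they are sent to \emph{disjoint} subsets of the column-$i$ variables of $\mathbf{z}^{(v)}$ (the ``lower'' and ``upper'' pieces respectively), so the variable substitution is injective and distinct monomials go to distinct monomials.

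For (ii) I would argue by inspecting the largest column $c$ in which $m_1$ and $m_2$ first disagree. The case $c\notin\{i,i+1\}$ is immediate, since the substitution acts as the identity there. If $c=i+1$, the largest disagreement row $j$ necessarily satisfies $j>v(i+1)$; after substitution the column-$(i+1)$ parts of both $m_k^\circ$ vanish, and in column $i$ of $m_k^\circ$ the upper rows $>v(i+1)$ come entirely from the substituted column $i+1$ while the lower rows $<v(i+1)$ come unchanged from the original column $i$. Since all rows strictly above $j$ in the upper portion agreed, and all rows in the lower portion lie below $v(i+1)<j$, row $j$ is the largest row of disagreement in column $i$ of $m_k^\circ$, and $m_1^\circ$ wins. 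If instead $c=i$, then $m_1$ and $m_2$ already agreed throughout column $i+1$, so the upper portion of column $i$ in $m_k^\circ$ coincides for $k=1,2$; the disagreement in the original column-$i$ part at the top row $j<v(i+1)$ is then transported unchanged to $\prec_\circ$.

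The one substantive point, and the only place a careless argument could go wrong, is the row-range decomposition: that in $\mathbf{z}^{(v)}$ the rows of column $i$ above $v(i+1)$ come entirely from column $i+1$ of $\mathbf{z}^{(vs_i)}$ while the rows below $v(i+1)$ come entirely from column $i$ of $\mathbf{z}^{(vs_i)}$. This is exactly the content of Lemma~\ref{lemma:diagramchasing}. Once it is recorded, both the injectivity and the order-preservation reduce to the routine bookkeeping above, in agreement with the authors' remark that the conclusion is ``clear.''
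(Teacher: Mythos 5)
Your overall plan---show that the variable substitution $z_{r,i+1}\mapsto z_{r,i}$ is injective and compatible with the term orders, and then deduce the lemma---is the right one, and the injectivity half is fine.  The trouble is in the order-preservation half, where you appear to have the within-column direction of $\prec$ backwards.  The displayed formula in Section~2.3 reads ``$z_{ij}\prec z_{kl}$ if $j=l$ and $i<k$,'' but this is a typo: the accompanying English (``favoring variables further down'') and the worked example there ($z_{24}\succ z_{34}$, $z_{23}\succ z_{33}\succ z_{43}$, $z_{11}\succ z_{21}\succ\cdots$) show that within a column the \emph{smaller} row index is \emph{larger} in $\prec$; this is also forced if $\prec$ is to pick out the northwest--southeast diagonal term of a minor, as claimed in the proof of Corollary~2.2.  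In your case $c=i+1$ you take $j$ to be the highest-indexed disagreeing row and argue that since the ``lower portion'' lives at rows strictly less than $j$ it cannot interfere.  With the correct order this is exactly backwards: those lower-indexed rows in column $i$ of $\mathbf{z}^{(v)}$ are \emph{more} dominant than row $j$, so a disagreement there would beat row $j$, and the argument as written does not rule that out.

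What actually rescues the lemma, and what the proof needs to record, is that the ``lower portion'' is vacuous: \emph{column $i$ of $D(vs_i)$ contains no boxes at all}.  Indeed, a box $(r,i)\in D(vs_i)$ would need $r< n-v(i+1)+1$ (i.e.\ $n-r+1>v(i+1)$) and also that $n-r+1$ occur in $vs_i$ at some position $>i$; but since $i$ is the last ascent of $v$, the values $vs_i(i+1)=v(i),\ vs_i(i+2)=v(i+2),\ \dots$ are all strictly less than $v(i+1)$, so no such position exists.  Symmetrically, column $i+1$ of $D(v)$ is empty.  Once this is observed, the lemma is immediate without any case analysis: the substitution fixes every variable outside column $i+1$, fixes all row indices, and shifts the column-$(i+1)$ variables of $\mathbf{z}^{(vs_i)}$ one column to the left into an otherwise-unoccupied column of $\mathbf{z}^{(v)}$, so it is an order-isomorphism of the variable set of $\mathbf{z}^{(vs_i)}$ onto $\mathbf{z}^{(v)}\setminus\{z_{\rm last}\}$.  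Both your points (i) and (ii) then follow at once, which is presumably why the authors call the lemma ``clear.''
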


The main technical point of this paper is below:

\begin{proposition}
\label{prop:mainprop1}
Let $i$ be the last ascent of $v\in S_n$. Suppose
\begin{itemize}
\item[(I)] $i$ is a descent of $w\in S_n$ and
${\mathcal L}$ is a leading term of an essential minor of
$I_{vs_i,w}$; or
\item[(II)] $i$ is an ascent of $w\in S_n$ and ${\mathcal L}$
is the leading term of an essential minor of $I_{vs_i,w}$; or
\item[(III)] $i$ is an ascent of $w\in S_n$ and ${\mathcal L}$ is the leading term of
an essential minor of $I_{vs_i,ws_i}$.
\end{itemize}
Then in cases (I) and (III), ${\mathcal L}^{\circ}$ is divisible by the
leading term ${\mathcal L}'$ with respect to $\prec_{\circ}$
of an essential minor
of $I_{v,w}$. In case (II), the same holds for
${\mathcal L}^{\circ}z_{\rm last}$.
\end{proposition}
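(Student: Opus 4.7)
My plan is to prove Proposition~\ref{prop:mainprop1} by a case analysis on the essential set position $(a,b)$ from which the minor defining $\mathcal{L}$ comes, namely $(a,b)\in\mathrm{Ess}(w)$ in cases (I) and (II), and $(a,b)\in\mathrm{Ess}(ws_i)$ in case (III). The central tools are Lemma~\ref{lemma:diagramchasing}, which pinpoints that $D(v)$ and $D(vs_i)$ agree outside columns $i$ and $i+1$ and describes exactly how the boxes in those two columns are shifted, Lemma~\ref{lemma:lastisi}, which locates $z_{\text{last}}$ at position $(n-v(i+1)+1,i)$ in column $i$, and Lemma~\ref{lemma:circlead}, which ensures the substitution $f\mapsto f^{\circ}$ carries leading terms under $\prec$ to leading terms under $\prec_{\circ}$.

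The trivial subcase $b<i$ is immediate: the southwest $a\times b$ submatrices of $Z^{(v)}$ and $Z^{(vs_i)}$ coincide, so the same essential minor works in $I_{v,w}$ and its leading term $\mathcal{L}'$ equals $\mathcal{L}^{\circ}=\mathcal{L}$. The real work is when $b\ge i$ and the submatrix straddles columns $i$ and $i+1$, where the pivot ($1$-entry) of column $i$ sits at row $n-v(i)+1$ in $Z^{(v)}$ but at row $n-v(i+1)+1$ in $Z^{(vs_i)}$, with the roles reversed in column $i+1$. For case (I), since $i$ is a descent of $w$, I would verify that the essential minor of $I_{v,w}$ based at the same $(a,b)\in\mathrm{Ess}(w)$ yields $\mathcal{L}'$ dividing $\mathcal{L}^{\circ}$: the descent hypothesis aligns the rank constraint $r^w_{ab}$ with the pivot-row swap so that the greedy diagonal-picking algorithms for the leading terms in $Z^{(v)}_{ab}$ and $Z^{(vs_i)}_{ab}$ select corresponding variables that match under the column-swap substitution. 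Case (III) follows the same strategy after matching each $(a,b)\in\mathrm{Ess}(ws_i)$ with an appropriate $(a',b')\in\mathrm{Ess}(w)$, using the controlled way $D(w)$ and $D(ws_i)$ differ under an ascent move of $w$ at $i$.

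Case (II) is the subtle one and explains the $z_{\text{last}}$ multiplier. Here $i$ is an ascent of $w$, so the pivot-row shift in columns $i,i+1$ affects the greedy leading-term computation in $Z^{(v)}_{ab}$ in a way it does not in $Z^{(vs_i)}_{ab}$: the box $z_{\text{last}}=z_{n-v(i+1)+1,i}$, which is a pivot $1$ in $Z^{(vs_i)}$, is a free variable of $Z^{(v)}$, and the greedy algorithm for the essential minor of $I_{v,w}$ at $(a,b)$ may be forced to include it. I plan to show $\mathcal{L}'$ divides $\mathcal{L}^{\circ}z_{\text{last}}$ by comparing the two greedy constructions explicitly, stratified by where in the submatrix the pivot swap takes place: in each subcase, the leading term of the essential minor of $I_{v,w}$ either equals $\mathcal{L}^{\circ}$ up to a unit, or differs from it by exactly the factor $z_{\text{last}}$ accounting for the row that used to absorb the pivot-$1$ of $Z^{(vs_i)}$. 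The hard part is bookkeeping the two permutation-sums for the competing determinants in parallel and ensuring the divisibility in the tight cases where the submatrix sits ``just right'' so that the pivot swap contributes a new term exactly at $z_{\text{last}}$; this reduces to a finite verification once the subcases are laid out.
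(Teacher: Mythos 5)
Your proposal captures the right high-level idea (case analysis driven by Lemma~\ref{lemma:diagramchasing}, and the observation that in case (II) the pivot $1$ of $Z^{(vs_i)}$ at position $(n-v(i+1)+1,i)$ becomes the free variable $z_{\rm last}$ in $Z^{(v)}$), and that last observation does agree with the paper's treatment of the corresponding subcase. But as written the plan has two genuine gaps.

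First, your primary stratification (by $b<i$ versus $b\ge i$, i.e.\ by the location of the governing essential box) is not the decomposition that makes the argument tractable. The paper instead stratifies by which of columns $i$ and $i+1$ the \emph{chosen submatrix} $M$ of $Z^{(vs_i)}$ actually uses (neither, both, only $i$, only $i+1$), which is a finer and more relevant invariant: for a fixed essential box $(a,b)$ with $b\ge i+1$ there are minors of all four types, and they need different constructions of the comparison minor $D'$ in $Z^{(v)}$. Several of these constructions are not ``use the same minor in $Z^{(v)}$'': e.g.\ when $M$ uses both columns $i$ and $i+1$, one must first replace a zero entry in column $i+1$ of $M$ by an actual variable of $Z^{(vs_i)}$, justify via cofactor expansion (and the fact that column $i$ of $M$ has a single nonzero entry, the pivot $1$) that this does not change the determinant, and only then swap the columns; and when $M$ uses only column $i+1$ and \emph{also} uses the row $v(i+1)$, one must drop a column of $M$, re-derive a factorization of the leading term, and then fall back to the previous construction. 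Your sketch of ``greedy diagonal-picking matches under the column swap'' does not by itself produce these intermediate minors or the accompanying leading-term factorizations.

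Second, and more seriously, your treatment of case (III) is too optimistic. You propose to ``match each $(a,b)\in\mathrm{Ess}(ws_i)$ with an appropriate $(a',b')\in\mathrm{Ess}(w)$'' and then run the same argument. But when $i$ is an ascent of $w$, the rank matrices $R^{w}$ and $R^{ws_i}$ differ in column $i$ on the rows $ws_i(i+1)<t\le ws_i(i)$, and an essential box $d=(ws_i(i+1)+1,\,i+1)$ of $D(ws_i)$ may fail to be a box of $D(w)$ at all; the relevant essential box of $D(w)$ is then $d^{\star}=(ws_i(i+1)+1,\,i)$, one column to the left, and crucially $r^{w}_{d^{\star}}=r^{ws_i}_{d}-1$, so the essential minor of $I_{v,w}$ one must compare against is of strictly smaller size than $D$. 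Showing that its leading term ${\mathcal L}'$ divides ${\mathcal L}^{\circ}$ then requires deleting one position (the rightmost variable contributing to ${\mathcal L}$) from the leading-term support and verifying the resulting product is still the leading term of a bona fide essential minor. This is not a bookkeeping detail one can defer to ``a finite verification once the subcases are laid out''; it is the main new ingredient in case (III), and nothing in your plan anticipates the need for it. Until this is addressed, the proposal does not constitute a proof of the proposition.
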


\noindent\emph{Proof of Proposition~\ref{prop:mainprop1}:}
The basic idea of the proof is as follows: given an essential determinant $D$
in $I_{vs_i,w}$ (or $I_{vs_i,ws_i}$) which uses the submatrix $M$ of
$Z^{(vs_i)}$ and has leading term ${\mathcal L}$, we consider a determinant
$D'$ of $I_{v,w}$ that uses the submatrix $M'$ of $Z^{(v)}$ with the same
columns and rows as $M$ except that if column $i$ is used in $M$, we use
column $i+1$ in $M'$, and vice versa. In view of
Lemma~\ref{lemma:diagramchasing}, \emph{usually} this works to give an
essential determinant
\[D'=\det M'\]
whose leading term $\mathcal{L}'$ has the desired properties.  However, this
sometimes fails, and our analysis below accounts for this.  Each of (I), (II)
and (III) is handled in four subcases, depending on which of the $i$-th and
$i+1$-th columns of $Z^{(vs_i)}$ $M$ uses.

\medskip
\noindent
\underline{ Case I.1 ($M$ uses neither the $i$-th nor $i+1$-th column):}
Let $M'$ be the submatrix that uses the
same rows and columns as $M$. By Lemma~\ref{lemma:diagramchasing},
$Z^{(vs_i)}$ and $Z^{(v)}$ do not differ in these columns, so
we have that
\[D^{\circ}=D=D'.\]
Hence by
Lemma~\ref{lemma:circlead},
\[{\mathcal L}^{\circ}={\mathcal L}={\mathcal L}',\]
so in particular
${\mathcal L}'$ divides ${\mathcal L}^{\circ}$.

\medskip
\noindent
\underline{ Case I.2 ($M$ uses both the $i$-th and $i+1$-th column):} We may assume $D\neq 0$,
hence:
\begin{claim}
\label{claim:theonlynonzero}
The only nonzero entry of $M$ in the $i$-th column
comes from row $vs_i(i)=v(i+1)$ of $Z^{(vs_i)}$, and that entry is a $1$.
\end{claim}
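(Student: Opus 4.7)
The plan is to show that the entire $i$-th column of $Z^{(vs_i)}$ consists of zeros except for a single $1$, placed at row $n - v(i+1) + 1$ (from the bottom). Once this structural fact is in hand, the claim becomes immediate: since $M$ is a submatrix of $Z^{(vs_i)}$, its $i$-th column inherits at most one nonzero entry, and because $D = \det M \neq 0$ rules out an all-zero column, $M$ must actually include the row containing that $1$, and that $1$ is then the unique nonzero entry of $M$ in column $i$.

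To establish the structural fact, I will use the explicit description of the coordinates on $\Omega_{vs_i}^{\circ}$. The $1$ in column $i$ of $Z^{(vs_i)}$ sits at row $n - vs_i(i) + 1 = n - v(i+1) + 1$, and the entries strictly above it are forced to $0$ by the defining equations of $\Omega_{vs_i}^{\circ}$. Below the $1$, an entry $z_{b,i}$ with $b < n - v(i+1) + 1$ is an unspecialized variable precisely when $(b, i) \in D(vs_i)$, which translates to the condition that the $1$ in row $b$ lies strictly to the right of column $i$, i.e.\ $vs_i^{-1}(n - b + 1) > i$. Substituting $k = n - b + 1$, the existence of such a variable would require some $k$ with $k > v(i+1)$ and $k \in \{vs_i(i+1), vs_i(i+2), \ldots, vs_i(n)\} = \{v(i), v(i+2), v(i+3), \ldots, v(n)\}$.

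The key step, and the point where the hypothesis that $i$ is the \emph{last} ascent of $v$ enters, is to observe that no such $k$ can exist. On the one hand, $v(i) < v(i+1)$ because $i$ is an ascent; on the other hand, because no later position is an ascent, the tail $v(i+1) > v(i+2) > \cdots > v(n)$ is strictly decreasing, so $v(i + j) < v(i+1)$ for every $j \geq 2$. Every candidate value is therefore strictly less than $v(i+1)$, contradicting $k > v(i+1)$. Consequently $D(vs_i)$ has no box in column $i$ below the $1$, so column $i$ of $Z^{(vs_i)}$ consists of zeros apart from the single $1$, and the claim follows. I do not anticipate any real obstacle; the entire content is this short combinatorial consequence of the last-ascent hypothesis, combined with the nonvanishing $D \neq 0$ applied to the resulting column structure.
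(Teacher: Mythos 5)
Your proof is correct and follows the same underlying idea as the paper's one-line citation of Lemma~\ref{lemma:diagramchasing} and the last-ascent hypothesis: namely, that $D(vs_i)$ has no boxes in column $i$, so that column of $Z^{(vs_i)}$ is a standard basis vector with a single $1$ at row $n-v(i+1)+1$, and $D\neq 0$ then forces $M$ to include that row. You have simply unpacked the diagram computation directly from the definitions of $\Omega_{vs_i}^{\circ}$ and $D(vs_i)$, making explicit how the decreasing tail $v(i+1)>v(i+2)>\cdots>v(n)$ rules out any unspecialized entry below the $1$.
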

\begin{proof}
Follows from Lemma~\ref{lemma:diagramchasing} and the hypothesis that
$i$ is the last ascent of $v$.
\end{proof}
First construct a submatrix $M''$ of $Z^{(vs_i)}$ by changing
$M$ by replacing the $0$ at position $(vs_i(i),i+1)$
by $z_{n-v(i+1)+1,i+1}$ and switching the $i$-th and $i+1$-th columns.
Now, we can compute $D$ by cofactor expansion along the $i$-th column, so,
by Claim~\ref{claim:theonlynonzero}, changing any entry in
row $vs_i(i)$ of $M$ other than the $1$ in column $i$
does not change the determinant.
Hence
\[D''=\pm D.\]
Let $M'$ be the submatrix of $Z^{(v)}$ that uses the same rows and columns
as $M$. By Lemma~\ref{lemma:diagramchasing},
\[(D'')^{\circ}=\pm D^{\circ}\]
equals the essential determinant $D'$.
Thus by Lemma~\ref{lemma:circlead},
\[{\mathcal L}'=\pm {\mathcal L}^{\circ},\]
and therefore ${\mathcal L}'$ divides ${\mathcal L}^{\circ}$.

\medskip
\noindent
\underline{ Case I.3 ($M$ uses the $i$-th column but not the $i+1$-th column):}
$M$ appears as a
submatrix $M'$ (with determinant $D'$ and leading term ${\mathcal L}'$)
of $Z^{(v)}$ using the same rows and columns except that the
$i$-th column is replaced by the $i+1$-th. Therefore, by
Lemma~\ref{lemma:diagramchasing},
\[D=D'=D^{\circ}\]
and
\[{\mathcal L}={\mathcal L}'={\mathcal L}^{\circ},\]
so ${\mathcal L}'$ divides
${\mathcal L}^{\circ}$. Thus we are done
in this case provided $D'$ is an essential minor of $I_{v,w}$, which follows
from:
\begin{claim}
\label{claim:essentialnochange}
If $i$ is a descent of $w$ then there are no boxes of the essential set
of $D(w)$ in column $i$.
\end{claim}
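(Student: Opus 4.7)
The plan is to prove the claim by direct verification from the defining conditions of $D(w)$ and $\Ess(w)$. I will suppose for contradiction that some $(a, i) \in \Ess(w)$ lies in column $i$ and then exploit the descent inequality $w(i) > w(i+1)$ to derive a contradiction with the Rothe diagram membership conditions in (\ref{eqn:diagramdef}).

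By (\ref{eqn:essentialsetprecisely}), the box $(a, i)$ is in $D(w)$ while $(a, i+1)$ is not. Expanding the first condition via (\ref{eqn:diagramdef}) gives the strict inequalities $a < n - w(i) + 1$ and $i < w^{-1}(n - a + 1)$, while the failure of one of the analogous two inequalities for $(a, i+1)$ splits the analysis into two short cases. In the column-hook case $a \ge n - w(i+1) + 1$, the descent hypothesis $w(i) > w(i+1)$ yields $n - w(i+1) + 1 > n - w(i) + 1 > a$, directly contradicting $a \ge n - w(i+1) + 1$. In the row-hook case $w^{-1}(n - a + 1) \le i + 1$, the strict lower bound $i < w^{-1}(n - a + 1)$ forces equality $w^{-1}(n - a + 1) = i + 1$, hence $a = n - w(i+1) + 1$; the descent inequality then converts this into $a > n - w(i) + 1$, again contradicting the membership of $(a, i)$ in $D(w)$.

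Since both cases lead to a contradiction, no essential box can lie in column $i$. I do not anticipate any real obstacle; geometrically the picture is simply that when $i$ is a descent of $w$ the dot in column $i$ sits strictly below the dot in column $i+1$, so any box of $D(w)$ in column $i$ is automatically accompanied by a box of $D(w)$ immediately to its right in the same row, leaving no candidate for an essential-set ``NE corner'' in column $i$.
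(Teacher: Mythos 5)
Your argument is correct and, unwound, is exactly the paper's one-line observation that when $i$ is a descent any box of $D(w)$ in column $i$ also has $(a,i+1)\in D(w)$, so it cannot be an essential (northeast-corner) box; you have simply written out the contrapositive as a two-case contradiction. No issues.
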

\begin{proof}
Any box of $D(w)$ in column $i$ must have a box of $D(w)$ to its immediate right.
\end{proof}

\medskip
\noindent
\underline{ Case I.4 ($M$ uses the $i+1$-th column but not the $i$-th column):}
If $M$ does not use row \linebreak
$vs_i(i)=v(i+1)$ then let $M'$ be the submatrix
of $Z^{(v)}$ that uses the same rows and columns of $M$ except that we use
column $i$ instead of column $i+1$. Then by Lemma~\ref{lemma:diagramchasing}
it follows
\[D^{\circ}=D'.\]
Hence, by Lemma~\ref{lemma:circlead},
\[{\mathcal L}^{\circ}={\mathcal L}'.\]
Moreover, $D'$ is still essential
since $M'$ uses columns weakly to the left of those of $M$ (and uses the
same rows).

On the other hand, if
$M$ uses row $vs_i(i)=v(i+1)$, let $j$ be the column of
the entry in the leading term of $D$ used from row $v(i+1)$.
Construct a submatrix $M''$ of $Z^{(vs_i)}$ by replacing column $j$
with column $i$ (with associated determinant $D''$ and leading term ${\mathcal L}''$
with respect to $\prec$).
Note that column $i$ is to the right of column $j$ and
consists only of $0$'s except for a $1$ in row $v(i+1)$.
Now, since by assumption $M$ uses column $i+1$,
$D''$ is essential for $w$.
It is easy to see from the fact $\prec$ is a lexicographic order that
\[{\mathcal L}''=\pm{\mathcal L}/z_{n-v(i+1)+1,j}.\]
Now let $M'$ be the submatrix of $Z^{(v)}$ using the same rows
and columns as $M''$. Then noting that $M''$ uses both columns $i$ and $i+1$, we can repeat the argument of
Case I.2 to see that
\[{\mathcal L'}=\pm({\mathcal L}'')^{\circ}=\pm({\mathcal L}/z_{n-v(i+1)+1,j})^{\circ})=\pm{\mathcal L}^{\circ}/z_{n-v(i+1)+1,j}.\]
Hence, ${\mathcal L'}$ divides ${\mathcal L}^{\circ}$ as desired.

\medskip
\noindent
\underline{ Case II.1 ($M$ uses neither the $i$-th nor $i+1$-th column):} This is proved exactly as in Case I.1.

\medskip
\noindent
\underline{ Case II.2 ($M$ uses both the $i$-th and $i+1$-th columns):} This is proved exactly as in Case I.2.

\medskip
\noindent
\underline{ Case II.3 ($M$ uses only the $i$-th column but not the
$i+1$-th column):} Construct a submatrix $M'$ of $Z^{(v)}$
by taking $M$ and replacing the $i$-th column of $Z^{(vs_i)}$
with the $i$-th column of $Z^{(v)}$ and leaving all other columns of $M$
unchanged. Note that
\[{\mathcal L}^{\circ}={\mathcal L}.\]
However, ${\mathcal L}'$ may not divide ${\mathcal L}^{\circ}$.
Instead we wish to prove
\[{\mathcal L}'=\pm {\mathcal L}^{\circ}z_{\rm last}\]
(and hence ${\mathcal L}'$ divides ${\mathcal L}^{\circ}z_{\rm last}$).

We assert that the position in each column of $M'$ that contributes to
${\mathcal L}'$ is the same as for ${\mathcal L}$ (except that we use $z_{\rm
  last}$ in column $i$ rather than $1$, respectively). This is
straightforward.  Clearly $D'$ is essential for $I_{v,w}$ since it uses the
same rows and columns as $D$.

\medskip
\noindent
\underline{Case II.4 ($M$ uses the $i+1$-th column and not the $i$-th column):} This is proved exactly as in Case I.4.

In the analysis of (III), the main new issue is that we must show that, given an essential minor $D$ for
$ws_i$, the newly constructed minor $D'$ is essential for $w$ instead.

\medskip
\noindent
\underline{Case III.1 ($M$ uses neither the $i$-th nor $i+1$-th column):}
The argument given in Case I.1 constructs a
 determinant $D'$ such that ${\mathcal L}'$ divides ${\mathcal L}^{\circ}$.
It remains to show that $D'$ is essential. The only places where the rank
matrices $R^{ws_i}$ and $R^{w}$ differ
are in column $i$ and rows $t$ for which
\[ws_i(i+1)< t\leq ws_i(i).\]
Note moreover that
no boxes of $D(ws_i)$ lie in this region. Now, let $d$ be the essential set
box of $ws_i$ causing $D$ to be essential.
If $d$ is not in column $i+1$, or $d$ is in column $i+1$ and strictly south
of row $ws_i(i)$, it easily follows that $D'$ is also
essential for $w$. Otherwise $d$ must be in
position
\[(ws_i(i+1)+1,i+1).\]
Then $d$ is no longer even a box of $D(w)$,
but the box $d^{\star}$ in position
\[(ws_i(i+1)+1,i)\] (to the
immediate left of $d$) is in $\Ess(w)$. In addition,
\[R^{w}_{d^{\star}}=R^{ws_i}_{d}-1,\]
and the columns of $D$
are weakly to the left of column $i$.

Consider the positions of the variables in $M$ contributing to the leading
term ${\mathcal L}$ of $D$. Let $M''$ be the submatrix defined by any
$R^{w}_{d^{\star}}=R^{ws_i}_{d}-1$ of these positions.  (For definiteness, we
can take all but the rightmost position, that of variable ${\bf z}_{\rm
  rightmost}$.) Then $M''$ has leading term ${\mathcal L}''$ given by the product
of the aforementioned variables we picked out, so
\[{\mathcal L}={\mathcal L}'' {\bf z}_{\rm rightmost}.\]
(Otherwise we would contradict the fact that ${\mathcal L}$ is a leading term of $D$.) Now let $M'$ be the submatrix of
$Z^{(vs_i)}$ using the same rows and columns as $M''$.
Then ${\mathcal L}'={\mathcal L}''=({\mathcal L}'')^{\circ}$,
and it follows that ${\mathcal L}'={\mathcal L}^{\circ}z_{\rm rightmost}$,
and hence ${\mathcal L}'$ divides ${\mathcal L}^{\circ}$.

\medskip
\noindent
\underline{{Case III.2 ($M$ uses both the $i$-th and $i+1$-th column):}}
Repeat the construction of Case I.2 to obtain a minor $D''$. Note that since
$M$ uses column $i+1$, the essential set box $d$ of $ws_i$ causing $D$ to be
essential is weakly to the right of column $i+1$.  If $d$ is strictly to the
right, then let $D'$ be the determinant of $Z^{(v)}$ that uses the same rows
and columns as $M$. This case follows as in Case I.2. Otherwise, if $d$ lies
in column $i+1$ and is at
\[(ws_i(i+1)+1,i+1),\]
the box $d^{\star}$ to its immediate left
is an essential box
for $w$, of rank one less, as in Case III.1.
Now let $M'$ be the one smaller
minor that uses the same rows and columns as $M$, except that
it excludes row $v(i+1)$ and column $i+1$. Then $D'$ is
essential for $w$ due to box $d^{\star}$.
This case then follows.

\medskip
\noindent
\underline{Case III.3 ($M$ uses column $i$ but not column $i+1$):}
To construct $D'$ we use the same construction as in Case I.3, and
its essentialness follows as in Case III.2.

\medskip
\noindent
\underline{Case III.4 ($M$ uses column $i+1$ but not column $i$):} Use the same construction as in Case I.4. There are
two cases to prove essentialness, paralleling the two subcases of Case I.4. If $M$ does not use row $vs_i(i)$ then we apply
the essential box argument of Case III.1. In the other subcase, we argue essentialness as in Case III.2.
\qed

\medskip
\noindent
\emph{Conclusion of the proof of Theorem~\ref{thm:nonfacepipedreamargument}:}
We induct on $\ell(w_0v)\geq 0$. The base case of $\ell(w_0v)=0$, which is
where $v=w_0$, is trivial since ${\bf z}^{(v)}=\emptyset$. For the induction
step, assume that $\ell(w_0 v)\geq 1$; hence ${\bf z}^{(v)}\neq\emptyset$, and
in particular, we have a last variable $z_{\rm last}$ and associated vertex
$V\in\Delta_{v,w}$, as defined above.

Let ${\mathcal P}$ be a nonface of $\Delta_{v,w}$; we must show ${\bf z}^{\mathcal P}$ is
divisible by the leading term of a defining minor of $I_{v,w}$.

Suppose $V\in \mathcal{P}$. Then (1) of Lemma~\ref{lemma:tautological} asserts $\mathcal{P}\setminus V$
is a nonface of $\link_V(\Delta)$. There are then two cases, depending on whether
$i$ is a descent or ascent of $w$. If $i$ is a descent of $w$, then part (A) of
Theorem~\ref{thm:vertex_dec} says
\[\link_V(\Delta)\cong \Delta_{vs_i,w}.\] Under the
relabeling map of Claim~\ref{claim:firsthomeo}, ${\widetilde {\mathcal P}\setminus V}$
is a nonface of $\Delta_{vs_i,w}$ and hence by induction ${\bf z}^{\widetilde{\mathcal P}\setminus V}$
is divisible by the leading term of a defining minor of $I_{vs_i,w}$. The conclusion then follows from
part (I) of Proposition~\ref{prop:mainprop1}.

If $V\in\mathcal{P}$ and $i$ is an ascent of $w$, then
$\mathcal{P}\setminus V$ is a nonface of $\link_V(\Delta)\cong
\Delta_{vs_i,w}$.  By induction ${\bf z}^{\widetilde{\mathcal
    P}\setminus V}$ is divisible by a leading term of a defining minor
of $I_{vs_i,w}$.  Since ${\bf z}^\mathcal{P}={\bf
  z}^{\widetilde{\mathcal P}\setminus V}z_{\rm last}$, the conclusion
follows from part (II) of Proposition~\ref{prop:mainprop1}.

If $V\not\in\mathcal{P}$, then $\mathcal{P}\setminus V$ is a nonface
of $\del_V(\Delta_{v,w})$.  Depending on whether $i$ is a descent or
ascent of $w$, $\del_V(\Delta_{v,w})=\Delta_{vs_i,w}$ or
$\del_v(\Delta_{v,w})=\Delta_{vs_i,ws_i}$, and part (I) or part (III) of
Proposition~\ref{prop:mainprop1} completes the proof.\qed

\section*{Acknowledgements}

We thank Allen Knutson for a number of inspiring communications and
suggestions and in particular for sharing his observation about
homogeneity and multiplicities of Schubert varieties. We also thank
Rebecca Goldin, Li Li, Hal Schenck and Frank Sottile for helpful
conversations and an anonymous referee for helpful comments and
suggestions.  This paper was partially completed during the NSF VIGRE
supported meeting on ``Combinatorial Algebraic Geometry of Flag
Varieties'' at the University of Iowa; we thank the organizers Megumi
Harada and Julianna Tymoczko.  AY is partially supported by NSF grants
DMS-0601010 and DMS-0901331.  AW thanks St. Olaf College, where he was
employed during most of the time this work was completed, for its
support.  We made extensive use of {\tt
  Macaulay~2} in our investigations.

\end{document}